\documentclass{amsart}

\usepackage{amssymb,amsfonts}
\usepackage[all,arc]{xy}
\usepackage{enumerate}
\usepackage{tikz-cd}
\usepackage{mathrsfs}

\usepackage{hyperref}  
\hypersetup{%
	bookmarksnumbered=true,%
	colorlinks=true,%
	linkcolor=blue,%
	citecolor=blue,%
	filecolor=blue,%
	menucolor=blue,%
	urlcolor=blue,%
	pdfnewwindow=true,%
	pdfstartview=FitBH}

\def\makeautorefname#1#2{\expandafter\def\csname#1autorefname\endcsname{#2}}
\def\equationautorefname~#1\null{(#1)\null}
\makeautorefname{footnote}{footnote}%
\makeautorefname{item}{item}%
\makeautorefname{figure}{Figure}%
\makeautorefname{table}{Table}%
\makeautorefname{part}{Part}%
\makeautorefname{appendix}{Appendix}%
\makeautorefname{chapter}{Chapter}%
\makeautorefname{section}{Section}%
\makeautorefname{subsection}{Section}%
\makeautorefname{subsubsection}{Section}%
\makeautorefname{theorem}{Theorem}%
\makeautorefname{thm}{Theorem}%
\makeautorefname{sta}{Statement}%
\makeautorefname{cor}{Corollary}%
\makeautorefname{lem}{Lemma}%
\makeautorefname{prop}{Proposition}%
\makeautorefname{pro}{Property}
\makeautorefname{conj}{Conjecture}%
\makeautorefname{conv}{Convention}%
\makeautorefname{defn}{Definition}%
\makeautorefname{notn}{Notation}
\makeautorefname{notns}{Notations}
\makeautorefname{rem}{Remark}%
\makeautorefname{rems}{Remarks}%
\makeautorefname{quest}{Question}%
\makeautorefname{exmp}{Example}%
\makeautorefname{ax}{Axiom}%
\makeautorefname{cla}{Claim}%
\makeautorefname{ass}{Assumption}%
\makeautorefname{asses}{Assumptions}%
\makeautorefname{con}{Construction}%
\makeautorefname{prob}{Problem}%
\makeautorefname{warn}{Warning}%
\makeautorefname{obs}{Observation}%

\newtheorem{thm}{Theorem}[section]

\newtheorem{prop}{Proposition}[section]
\newtheorem{lem}{Lemma}[section]
\newtheorem{cla}{Claim}[section]

\theoremstyle{definition}
\newtheorem{defn}{Definition}[section]

\newtheorem{rem}{Remark}[section]

\newcommand{\sharecounter}[1]{%
	\expandafter\let\csname c@#1\expandafter\endcsname\csname c@thm\endcsname}
\sharecounter{obs}
\sharecounter{cor}
\sharecounter{prop}
\sharecounter{lem}
\sharecounter{prob}
\sharecounter{con}
\sharecounter{conj}
\sharecounter{defn}
\sharecounter{notn}
\sharecounter{notns}
\sharecounter{exmp}
\sharecounter{ax}
\sharecounter{pro}
\sharecounter{ass}
\sharecounter{warn}
\sharecounter{rem}
\sharecounter{conv}
\sharecounter{sch}
\sharecounter{equation}

\numberwithin{equation}{section}

\newcommand{\C}{\mathbb{C}}

\newcommand{\Q}{\mathbb{Q}}

\newcommand{\mtbb}{\mathbb}
\newcommand{\mtfk}{\mathfrak}
\newcommand{\mtcl}{\mathcal}

\newcommand{\K}{\mathbb{K}}

\newcommand{\N}{\mathbb{N}}
\renewcommand{\O}{\mathcal{O}}
\author{Haoran Zhu}
\email{zhu-hr22@mails.tsinghua.edu.cn}
\address{Qiuzhen College, Tsinghua University, Beijing, China}
\usepackage{amssymb}
\usepackage{amscd}
\usepackage{mathabx}
\usepackage{amsmath}
\usepackage{enumerate}
\usepackage{exscale}
\usepackage{relsize}
\usepackage{paralist}
\newcommand{\subsectionnotoc}[1]{%
	\par
	\addvspace{.5\linespacing plus .7\linespacing}%
	\noindent{\normalfont\bfseries #1.}\hspace{.5em}%
}
\title{Boundedness of Polarized Calabi--Yau Fibrations}
\begin{document}
	\begin{abstract}
		We investigate the boundedness problem for log Calabi–Yau fibrations whose bases and general fibers are bounded. We improve the results in \cite{jjz25} from birational boundedness to actual boundedness, which confirms a conjecture of Birkar and Hacon when the boundary coefficients belong to a fixed finite rational set. We also get a new proof for the birational boundedness result in \cite{jjz25} which is substantially more conceptual.
	\end{abstract}
	\maketitle
	\tableofcontents
	\section{Introduction}
	\paragraph{Convention.}
	We work over $\C$ throughout this paper.
	
	In this paper, we investigate the boundedness problem for polarized Calabi--Yau fibrations. Jiang--Jiao--Zhu proved that the couples in Theorem~\ref{mainthm} are log bounded in codimension one~\cite[Theorem~1.4]{jjz25}, and proved log boundedness under the additional assumption that the irregularity of the general fiber vanishes~\cite[Theorem~1.7]{jjz25}.
	\begin{defn}
		A polarized log Calabi--Yau fibration \( f : (X, B), A \to (Z, H) \) consists of
		\begin{itemize}
			\item a normal quasi-projective pair \((X, B)\),
			\item a projective fibration \( f : X \to Z \) such that \( K_X + B \sim_{\mathbb{Q}} f^{\ast}N \) for some \(\mathbb{Q}\)-Cartier divisor \( N \) on \( Z \),
			\item an integral divisor \( A \) on \( X \) that is ample over \( Z \), and
			\item a very ample divisor \( H \geq 0 \) on \( Z \) such that \( H - N \) is ample.
		\end{itemize}
		We call \( f : (X, B), A \to (Z, H) \) a weak polarized log Calabi--Yau fibration if \( H - N \) is only assumed to be pseudo-effective.
	\end{defn}
	\begin{defn}
		Let \( d \in \mathbb{Z}^{>0} \), \( v, r, \varepsilon \in \mathbb{Q}^{>0} \), and let \( \Phi \subset [0, 1] \cap \mathbb{Q} \) be a DCC set.
		\begin{enumerate}
			\item[(1)] A \emph{(weak) \((d, r, \varepsilon)\)-polarized log Calabi--Yau fibration} is a (weak) polarized log Calabi--Yau fibration \( f : (X, B), A \to (Z, H) \) satisfying
			\begin{itemize}
				\item \( (X, B) \) is a projective \( \varepsilon \)-lc pair of dimension \( d \), and
				\item \( H^{\dim Z} \leq r \).
			\end{itemize}

			\item[(2)] If, additionally,
			\begin{itemize}
				\item \( \operatorname{vol}_F(A|_F) \leq v \), where \( F \) is a general fiber of \( f : X \to Z \),
			\end{itemize}
			then we call \( f : (X, B), A \to (Z, H) \) a \emph{(weak) \((d, v, r, \varepsilon)\)-polarized log Calabi--Yau fibration}.
			
			\item[(3)] Furthermore, if
			\begin{itemize}
				\item the coefficients of \( B \) belong to \( \Phi \),
			\end{itemize}
			then we refer to \( f : (X, B), A \to (Z, H) \) as a \emph{(weak) \((d, \Phi, v, r, \varepsilon)\)-polarized log Calabi--Yau fibration}.
		\end{enumerate}
	\end{defn}
	\begin{thm}[Main theorem]\label{mainthm}
		Let \( d \in \mathbb{N} \), \( v, r, \varepsilon \in \mathbb{Q}^{>0} \) and \( \Phi \subset [0, 1] \cap \mathbb{Q} \) be a finite set. Consider the set of all weak \((d, \Phi, v, r, \varepsilon)\)-polarized log Calabi--Yau fibrations \( f : (X, B), A \to (Z, H) \). Then the couples
		\[
		\left(X,\operatorname{Supp}(B+f^{\ast}H)\right)
		\]
		form a log bounded family.
	\end{thm}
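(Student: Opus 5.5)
Throughout, I take from \cite[Theorem~1.4]{jjz25} that the couples $(X,\operatorname{Supp}(B+f^*H))$ are log bounded in codimension one. The plan is to upgrade this to genuine log boundedness by producing, on each $X$, a very ample divisor of bounded degree whose support contains $\operatorname{Supp}(B+f^*H)$.

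\textbf{Step 1 (bounded models).} Apply \cite[Theorem~1.4]{jjz25} to get a bounded family $\mathcal{X}\to T$ with divisor $\mathcal{D}$. For each $f$ there is $t\in T$ and a birational map $\phi\colon X\dashrightarrow \mathcal{X}_t$ which is an isomorphism in codimension one and carries $\operatorname{Supp}(B+f^*H)$ onto $\operatorname{Supp}\mathcal{D}_t$.

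\textbf{Step 2 (a good model $Y$ with $K_Y+B_Y$ pulled back from $Z$).} After stratifying $T$ and taking a log resolution of the family, use that $\Phi$ is finite. Then $B$ corresponds to finitely many choices of coefficient data on $\mathcal{D}_t$, giving a $\mathbb{Q}$-divisor $\mathcal{B}_t$. Since $(X,B)$ is $\varepsilon$-lc, so is the transformed pair $(\mathcal{X}_t,\mathcal{B}_t)$, after running a relative MMP over a bounded image $\mathcal{Z}_t$ of $Z$. The base $Z$ is itself bounded, because $H$ is very ample with $H^{\dim Z}\le r$. The key point is that $K+B\sim_{\mathbb{Q}}f^*N$ is numerically trivial over $Z$. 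So $X$ is the relative ample model over $Z$ of $A$, where the strict transform of $A$ on $\mathcal{X}_t$ is some divisor $\mathcal{A}_t$. Concretely,
\[
X=\operatorname{Proj}_Z\bigoplus_m f_*\mathcal{O}_X(mA).
\]
Because $X$ and $\mathcal{X}_t$ are isomorphic in codimension one, this Proj agrees with the one computed on $\mathcal{X}_t$ using $\mathcal{A}_t$.

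\textbf{Step 3 (bounding $A$).} I must bound the Weil divisor $\mathcal{A}_t$ in the family, up to linear equivalence over $Z$ and twisting by $f^*H$. I would do this in two parts:
\begin{itemize}
\item Use $\operatorname{vol}(A|_F)\le v$ together with boundedness of general fibers. Fibers are $\varepsilon$-lc Calabi--Yau with bounded polarization, so they are bounded by Birkar's BAB-type results. This restricts $\mathcal{A}_t|_F$ to finitely many numerical classes.
\item The main obstacle is the irregularity of $F$. When $h^1(\mathcal{O}_F)>0$, the class of $A$ can vary continuously in $\operatorname{Pic}^0$, which is exactly why \cite{jjz25} needed the vanishing hypothesis. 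To handle it, I would replace $A$ by a divisor that differs from it only by something pulled back from $Z$ plus a $\operatorname{Pic}^0$-twist. The relative ample model depends only on the numerical class over $Z$, since ampleness is numerical. So the model $X$ is determined by the numerical class of $\mathcal{A}_t$ over $\mathcal{Z}_t$, and this class ranges over a bounded set: it is bounded on fibers and pseudo-effective-bounded vertically, using $H-N$ pseudo-effective.
\end{itemize}
Finiteness of relative ample models for boundedly many numerical classes in a bounded klt family then follows from finiteness of models (BCHM) applied over the base $T$.

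\textbf{Step 4 (conclusion).} Each $X$ is then one of finitely many relative ample models of a bounded family. Each such model comes with the pushforward of $\mathcal{D}$, which is $\operatorname{Supp}(B+f^*H)$. Hence the couples $(X,\operatorname{Supp}(B+f^*H))$ are log bounded.

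I expect Step 3 to be the hardest part: making the numerical-class finiteness uniform over the family, and applying BCHM to a family rather than to a single variety. For that I would use a relative Mori dream space structure over $T$ after a finite stratification.
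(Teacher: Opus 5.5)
\textbf{Verdict: there is a genuine gap in Step~3, which is where the whole difficulty lies.} Steps~1, 2 and~4 are reasonable in outline. Taking \cite[Theorem~1.4]{jjz25} as input is legitimate, and Step~4 is essentially the paper's last step (finitely many lc models over $\mathcal Z$, via \cite[Lemma~5.3]{jjz25}).

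The problem is that finiteness of models over $T$ (BCHM, or a relative Mori dream space structure after a finite stratification) only controls divisors that live on the total space $\mathcal X$. It says nothing about a class that exists only on one fiber $\mathcal X_t$. The strict transform $\mathcal A_t$ is such a class. Moreover, $N^1(\mathcal X_t/\mathcal Z_t)$, already the N\'eron--Severi group of the generic fiber of $\mathcal X_t\to\mathcal Z_t$, jumps along countably many Noether--Lefschetz type loci in $T$. Think of $\mathcal F\times\mathbb P^1$ with $\mathcal F\to T$ a family of abelian surfaces. No finite stratification captures this. So you must re-parametrize so that a divisor representing $A$ becomes part of the bounded data, and your proposal has no mechanism for that.

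Your first bullet cannot supply it, because its claim is false. Take $F=E\times E$ with $E$ an elliptic curve without CM. Principal polarizations on $F$ correspond to positive definite integral symmetric $2\times 2$ matrices of determinant $1$, for instance $\begin{pmatrix}1&n\\ n&n^2+1\end{pmatrix}$. So there are infinitely many numerically distinct ample classes of volume $2$, already for $X=F\times\mathbb P^1\to\mathbb P^1$. They are finite only modulo $\operatorname{Aut}(F)$, and automorphisms of a fiber need not extend over $Z$. Boundedness of the polarized fibers $(F,A|_F)$ is a statement about a different parametrizing family. Turning it into a bounded family over $Z$ that carries the polarization is exactly the hard part.

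Two smaller points:
\begin{itemize}
\item Your remark that $\operatorname{Pic}^0$-twists do not change the relative ample model is correct, but it only disposes of the continuous part of the problem.
\item The phrase ``pseudo-effective-bounded vertically, using $H-N$ pseudo-effective'' is unsupported. $H-N$ controls $K_X+B$, not the vertical part of the class of $A$; the paper uses \cite[Lemma~5.2]{jjz25} for that.
\end{itemize}

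\textbf{How the paper fills this gap.} Theorem~\ref{corethm} produces a bounded model $(V,B_V+\frac12h^{\ast}G+J)$, isomorphic to $X$ in codimension one, with an effective divisor $J$ lying in the family and satisfying $J|_{V_\eta}\sim lA_V|_{V_\eta}$. The polarization is bounded by bounding the moduli maps $U\to\mathfrak M$ into the stack of traditional stable minimal models, whose objects record the relative Picard class of $lA$:
\begin{itemize}
\item the curve case (Theorem~\ref{curvecase}, Lemma~\ref{curvemap}) bounds both the bad locus (Lemma~\ref{bddopen}) and the coarse moduli map $Z\dashrightarrow M$ (Lemma~\ref{algspacetarget});
\item after removing a bounded divisor, the coarse map is regular on the smooth open set $U$, and the lift over a big open subset of $U$ extends to $U$ by Theorem~\ref{hartogslift};
\item Theorem~\ref{shaf} then bounds these lifts, and hence the polarized families over $U$.
\end{itemize}
Only once $J$ and the vertical divisors $D_i$ are defined on the total space does finiteness of lc models in families apply. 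To repair your route you would need an analogous input: one that produces, in a bounded family, an effective divisor numerically equivalent over $Z$ to $lA_V$ up to vertical divisors.
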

	Compared with~\cite[Theorem~1.7]{jjz25}, Theorem~\ref{mainthm} removes the irregularity assumption and answers~\cite[Question~1.6]{jjz25} when the coefficients of $B$ belong to a fixed finite rational set. The proof below does not use the boundedness theorem of~\cite{jjz25}. Its starting point is instead the curve case. For a general  curve $C\subset Z$ cut by $H^{\dim Z-1}$, boundedness over $C$ controls the intersection of $C$ with the divisorial part of the locus where the fibration fails to define a family of traditional stable minimal models. This produces a reduced divisor $D$ of bounded degree such that $(Z,D)$ is log bounded, $U:=Z\setminus D$ is smooth, and the fibration gives a traditional-stable-minimal-model family over a big open subset $V\subset U$. The associated coarse moduli map is bounded by its restrictions to such curves. After enlarging $D$ within a bounded linear system, this coarse map is regular on $U$, and~\cite[Corollary~3.5]{kovacs2009boundednessfamiliescanonicallypolarized} extends the lift $V\to\mathfrak M$ uniquely to $U\to\mathfrak M$. Thus not only the underlying families but also the polarizations are bounded over $U$. 
	Then the boundedness follows from standard arguments which also appear in \cite{jjz25}.
	
	The following curve result is the principal external boundedness input.
	\begin{thm}[Curve case]\label{curvecase}
		Let \( d \in \mathbb{N} \), \( v, r, \varepsilon \in \mathbb{Q}^{>0} \), and let \( \Phi \subset [0, 1] \cap \mathbb{Q} \) be a finite set. There exist a bounded family of couples $\mtcl P$ and a number $\lambda$, depending only on $d,\Phi,v,r,\varepsilon$, with the following property. Let $f:(X,B),A\to (C,H)$ be a weak $(d,\Phi,v,r,\varepsilon)$-polarized log Calabi--Yau fibration over a curve $C$. Then $(X,B)\to C$ belongs to $\mtcl P$, and there exists $A^{\prime}\sim_{C} A$ such that $\operatorname{vol}_{X}(A^{\prime})\leq \lambda$, $A^{\prime}$ is ample on $X$, and $A^{\prime}-f^{\ast}H$ is pseudo-effective. Moreover, there exists a positive integer $l$, depending only on $d,v,r,\varepsilon,\Phi$, such that $lA^{\prime}$ is very ample on $X$.
	\end{thm}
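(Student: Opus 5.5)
The strategy is to reduce to known boundedness results for log Calabi--Yau fibrations over curves with bounded base and bounded general fiber, in the spirit of the curve results of Birkar and of \cite{jjz25}, and then to produce the bounded polarization $A'$ by twisting $A$ with pullbacks of points of $C$.

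\emph{Step 1: the base.} Since $C$ is a curve and $H$ is very ample with $\deg H = H^{\dim Z}\le r$, the pair $(C,H)$ lies in a bounded family. The degree of $C$ in the embedding given by $|H|$ is at most $r$, and the genus is then bounded by Castelnuovo. Because $H-N$ is pseudo-effective on a curve, $\deg N\le \deg H\le r$. Also $-\deg N$ is bounded below: by the canonical bundle formula, $K_X+B\sim_{\mathbb Q} f^*(K_C+B_C+M_C)$ with $B_C\ge 0$ and $M_C$ nef, so $\deg N\ge 2g(C)-2\ge -2$. Hence $K_X+B\equiv f^*N$ with $|\deg N|$ bounded.

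\emph{Step 2: the general fiber.} The general fiber $(F,B_F)$ is an $\varepsilon$-lc log Calabi--Yau pair of dimension $d-1$, with coefficients in the finite set $\Phi$ and a polarization satisfying $\operatorname{vol}(A|_F)\le v$. By Birkar's boundedness of polarized log Calabi--Yau pairs, these pairs $(F,\operatorname{Supp}B_F)$ form a bounded family. Moreover, there is a fixed $m$ such that $mA|_F$ is very ample and $h^0(mA|_F)$ is bounded.

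\emph{Step 3: bounding $A'$ by twisting.} Set $A_t:=A+tf^*P$ for a point $P\in C$ and an integer $t$. Then:
\begin{itemize}
\item $\operatorname{vol}(A_t)=A^d+d\,t\,A^{d-1}\cdot F$ is affine-linear in $t$, and $A^{d-1}\cdot F=\operatorname{vol}(A|_F)\le v$.
\item Choose $t$ minimal such that $A_t$ is nef and $A_t-f^*H$ is pseudo-effective. Replacing $t$ by $t+\lceil r\rceil+1$ then makes $A'$ ample with $A'-f^*H$ pseudo-effective.
\end{itemize}
The key claim is that for minimal $t$, $\operatorname{vol}(A_t)$ is bounded by a constant depending on $d,\Phi,v,r,\varepsilon$. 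Since $A^{d-1}F\le v$ and the degree shift is at most about $r+1$, it suffices to bound $A_{t_0}^d$ where $t_0$ is minimal with $A_{t_0}$ nef. This is where I would follow \cite{jjz25} and Birkar's ``bounded polarized fibrations over curves'' argument:
\begin{enumerate}
\item Use effective basepoint freeness for the relative linear system $|mA|$ over $C$. Bounded fibers give bounded $h^0$ and bounded relative Castelnuovo--Mumford regularity.
\item Embed $X$ into $\mathbb P_C(f_*\mathcal O_X(mA))$, a projective bundle of bounded rank over a bounded curve.
\item Bound the spread of the Harder--Narasimhan slopes of $f_*\mathcal O_X(mA)$, using $\varepsilon$-lc singularities and the bound on $\deg N$ through semipositivity (Kawamata/Fujino--Viehweg type) of $f_*\mathcal O(m(A)-?)$ twisted by $K_{X/C}+B$.
\end{enumerate}
Normalizing the minimal slope to be about zero by the twist $t$ gives a bounded-degree embedding, hence $\operatorname{vol}(A')\le\lambda$.

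\emph{Step 4: boundedness of $(X,B)\to C$ and very ampleness.} Once $A'$ is ample with bounded volume and $(X,B)$ is $\varepsilon$-lc with $K_X+B\equiv f^*N$ of bounded degree, both $(K_X+B)\cdot A'^{d-1}$ and $\operatorname{Supp}(B)\cdot A'^{d-1}$ are bounded, the latter because $\Phi$ is finite with positive minimum. Standard results (Hacon--M\textsuperscript{c}Kernan--Xu, Birkar) then give log boundedness of $(X,\operatorname{Supp}B)$ together with $f$ and $C$, hence a bounded family $\mathcal P$. For very ampleness of $lA'$: by Koll\'ar's effective basepoint freeness for ample Cartier multiples on bounded $\varepsilon$-lc varieties, the Cartier index of $A'$ is bounded on a bounded family of $\varepsilon$-lc pairs (by Noetherian induction on the family). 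This gives a uniform $l$.

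The main obstacle is Step 3, the uniform bound on the slope spread of $f_*\mathcal O_X(mA)$, equivalently the bound on the volume of the minimally twisted $A$. This is exactly where the irregularity of the fiber caused trouble in \cite{jjz25}. Over a curve, however, I expect the weak positivity of direct images, combined with the $\varepsilon$-lc condition and the bound on $\deg N$, to suffice. I would try first to argue by contradiction with a degenerating sequence, passing to a bounded birational model via \cite[Theorem~1.4]{jjz25}-style codimension-one boundedness.
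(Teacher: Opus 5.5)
\paragraph{Gap: Step~3 is not proved.} Steps~1--2 are fine, and so is the bookkeeping in Step~3. The bound $-2\le\deg N\le\deg H\le r$ is exactly what the paper uses, and passing from the minimal nef twist $A_{t_0}$ to an ample $A'$ with $A'-f^*H$ pseudo-effective costs only a bounded number of fibers. The gap is your key claim: that $\operatorname{vol}(A_{t_0})$, equivalently the slope spread $\mu_{\max}-\mu_{\min}$ of $f_*\mathcal O_X(mA)$, is bounded in terms of $d,\Phi,v,r,\varepsilon$. That claim is the entire content of the theorem, and you only state an expectation for it.

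The tool you name cannot deliver it. Any proof must use the $\varepsilon$-lc hypothesis essentially. Take $X=\mathbb F_e\to\mathbb P^1$ and let $B$ be the toric boundary, so $K_X+B\sim 0$. Then $A=\sigma$, the negative section, has $\operatorname{vol}(A|_F)=1$. But the minimal nef twist is $\sigma+eF$, of volume $e$, and these surfaces form an unbounded family. So the statement fails for lc pairs. Semipositivity of direct images (Kawamata, Fujino--Viehweg type) holds for lc pairs, so it cannot by itself produce the bound. Moreover, $K_{X/C}+B$ is numerically pulled back from $C$. Applying semipositivity to $f_*\mathcal O_X(m(K_{X/C}+B)+L)$ with $L=mA$ therefore presupposes that $A$ is already nef, which is the twist you are trying to control. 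It also gives no upper bound on $\mu_{\max}$.

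In the example, the discrepancy condition enters as follows: $K_X+B\equiv f^*N$ forces $\operatorname{mult}_\sigma B\ge 1-(2+\deg N)/e$, which is incompatible with $\varepsilon$-lc once $e>(2+r)/\varepsilon$. Your sketch never says how a constraint of this kind enters in general. Your fallback through codimension-one boundedness as in \cite{jjz25} does not help either. A bounded model isomorphic to $X$ in codimension one does not bound the class of $A$ on the generic fiber, which can move in $\operatorname{Pic}^0$ of the fiber. That is exactly the irregularity obstruction you yourself point out.

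\paragraph{What the paper does, and a smaller issue in Step~4.} The paper does not reprove the twist bound; it imports it. It observes that the construction in the proof of \cite[Proposition~3.4]{Hashizume_2025} produces $A'\sim_C A$, ample with bounded volume. The estimate there depends on $u=\deg N$ only through an upper bound for $|u|$, which your Step~1 supplies, and not on the denominator of $u$. Then \cite[Theorem~3.6]{Hashizume_2025}, using $\varepsilon$-lc and finiteness of $\Phi$, gives boundedness of $(X,B)$ and a bounded Cartier index for $A'$. Effective very ampleness then gives $l$. You need either this input or a genuine proof of the twist bound that uses the discrepancy condition.

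There is also an ordering problem in your Step~4. Bounding $\operatorname{Supp}(B)\cdot A'^{d-1}$ amounts to bounding $-K_X\cdot A'^{d-1}=B\cdot A'^{d-1}-\deg N\cdot\operatorname{vol}(A|_F)$. This does not follow directly from $\operatorname{vol}(A')\le\lambda$. The cone theorem gives it once the Cartier index of $A'$ is bounded, but you obtain that bound only after invoking boundedness of $(X,B)$, so the argument is circular as written.
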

	\begin{proof}
		Write $K_X+B\sim_{\Q}f^{\ast}N$. The canonical bundle formula and the pseudo-effectivity of $H-N$ give
		\[
		-2\leq\deg N\leq\deg H\leq r.
		\]
		The proof of~\cite[Proposition~3.4]{Hashizume_2025} constructs a divisor $A'\sim_C A$ which is ample with bounded volume. In that construction the numerical estimate for $\operatorname{vol}(A')$ depends on $u=\deg N$ only through an upper bound for $|u|$. The boundedness of the denominator of $u$ is not used in the construction of $A'$.
		
		In our setting the pairs are already $\varepsilon$-lc and the coefficients of $B$ lie in the fixed finite set $\Phi$. Thus the boundedness argument in~\cite[Theorem~3.6]{Hashizume_2025} implies that $(X,B)$ forms a bounded family and a bounded multiple of $A^{\prime}$ is Cartier. Finally, the effective very ampleness give a uniform $l$ for which $lA'$ is very ample.
	\end{proof}
	The arguments in unpublished notes of Birkar--Hacon also essentially prove Theorem~\ref{curvecase}.
	
	\subsectionnotoc{Acknowledgements}
	The author would like to thank his advisor, Caucher Birkar for his encouragement and constant support, and for suggesting the author to consider whether techniques from the geometric Shafarevich program could be used to give a more conceptual proof of results in \cite{jjz25}. This suggestion initiated the present work.
	The author would also like to thank Caucher Birkar, Bingyi Chen, Junpeng Jiao, Xiaowei Jiang, Xintong Jiang, Behrouz Taji, and Fulin Xu for helpful discussions. He would also like to thank Junpeng Jiao, Xiaowei Jiang, Xintong Jiang, and Fulin Xu for pointing out mistakes in previous versions of this manuscript.
	
	\subsectionnotoc{Disclosure of AI Use}
	The main parts of this work were completed independently by the author without reliance on AI. ChatGPT was used to assist in proofreading early drafts, fixing technical mistakes, and polishing the elaboration. The argument in the proof of Lemma~2.15 and the argument in the Appendix A were suggested by ChatGPT.

	\section{Preliminary Results}
	\subsection{Notation and conventions}
	We use the following conventions throughout the paper.
	\begin{enumerate}
		\item A projective morphism $f:X\to Z$ between normal quasi-projective varieties is called a \emph{contraction} if $f_{\ast}\O_X=\O_Z$. Thus $f$ is surjective with connected fibers. The word \emph{fibration} will always mean a projective contraction.
		
		\item Unless otherwise stated, every curve is assumed to be normal, or equivalently smooth over $\C$.
		
		\item An \emph{integral divisor} is a $\Q$-Cartier Weil divisor with integer coefficients; it is not necessarily Cartier or effective.
		
		\item Following~\cite[Tag~056P]{stacks-project}, if $f:X\to S$ is a morphism, a \emph{relative effective Cartier divisor} on $X/S$ is an effective Cartier divisor $D\subset X$ such that $D\to S$ is flat.
		
		\item For a fibration $f:X\to Z$, we denote its generic fiber by $X_\eta$ and a general fiber by $X_g$. If $D$ is a divisor on $X$, we write $D_\eta:=D|_{X_\eta}$ and $D_g:=D|_{X_g}$ whenever these restrictions are defined.
		
		\item Let $f:X\to Z$ be a morphism and let $L,M$ be $\Q$-Cartier $\Q$-divisors on $X$. We write $L\sim_{\Q,Z}M$ if $L-M\sim_{\Q}f^{\ast}N$ for some $\Q$-Cartier $\Q$-divisor $N$ on $Z$. The analogous notation $\sim_Z$ is used for linear equivalence over $Z$.
		
		\item A prime divisor $E\subset X$ is \emph{horizontal} over $Z$ if $f(E)=Z$, and is \emph{vertical} otherwise. A prime divisor $E\subset X$ is called $f$-exceptional if $\operatorname{codim}_Z f(E)\geq2$. If $f:X\to Z$ is birational, then $\operatorname{Exc}(f)\subset X$ denotes the locus where $f$ is not an isomorphism, and
		\[
		\operatorname{Exc}_Z(f):=f\bigl(\operatorname{Exc}(f)\bigr)\subset Z
		\]
		denotes its image on $Z$. In particular, $\operatorname{Exc}(f)$ may have components of codimension greater than one and is not restricted to its divisorial part.
		
		\item If $\phi:X\dashrightarrow Y$ is a birational map and $D$ is a divisor on $X$, then $D_Y$ denotes the strict transform of $D$ on $Y$.
	\end{enumerate}
	\subsection{Strict transforms and linear equivalence}
	\begin{lem}[Linear equivalence under strict transform]\label{stricttransformlinearequiv}
		Let $f:X\to S,\ g:Y\to S$ be contractions between normal varieties. Let $\phi:X\dashrightarrow Y$ be a birational contraction over $S$. Let $D_1,D_2$ be $\Q$-Cartier $\Q$-divisors on $X$ whose strict transforms $(D_1)_Y,(D_2)_Y$ are $\Q$-Cartier. If
		\[
		D_1\sim_{\Q,S}D_2,
		\]
		then
		\[
		(D_1)_Y\sim_{\Q,S}(D_2)_Y.
		\]
		The same conclusion holds for linear equivalence over $S$.
	\end{lem}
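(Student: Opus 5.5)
The plan is to reduce the statement to the fact that strict transform along a birational contraction commutes with pushforward, and that pushforward of a principal divisor is principal. Write $D_1-D_2\sim_{\Q}f^{\ast}N$ for some $\Q$-Cartier $\Q$-divisor $N$ on $S$. After clearing denominators, we may take an integer $m>0$ such that $mN$ is Cartier and
\[
m(D_1-D_2)=f^{\ast}(mN)+\operatorname{div}_X(h)
\]
for a rational function $h\in K(X)=K(Y)$. For the linear-equivalence version, take $m=1$ and $N$ Cartier.

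Since $\phi$ is a birational contraction, $\phi^{-1}$ contracts no divisors. Hence the strict transform $D\mapsto D_Y$ is a well-defined additive map on Weil $\Q$-divisors, namely the pushforward $\phi_{\ast}$ of cycles: a prime divisor on $X$ goes to its image if that image is a divisor, and to $0$ otherwise. Applying this map to the displayed equality gives
\[
m\bigl((D_1)_Y-(D_2)_Y\bigr)=\phi_{\ast}\bigl(f^{\ast}(mN)\bigr)+\phi_{\ast}\operatorname{div}_X(h).
\]
I will then identify each term on the right.

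First, $\phi_{\ast}\operatorname{div}_X(h)=\operatorname{div}_Y(h)$. Resolve $\phi$ by a common normal model $p:W\to X$, $q:W\to Y$. Then $\operatorname{div}_Y(h)=q_{\ast}\operatorname{div}_W(h)$, because the prime divisors of $Y$ are exactly the pushforwards of their strict transforms on $W$, and the valuations agree. Likewise $\phi_{\ast}\operatorname{div}_X(h)=q_{\ast}p^{\ast}\operatorname{div}_X(h)$ modulo $p$-exceptional divisors. These exceptional divisors are $q$-exceptional, since $\phi^{-1}$ contracts no divisor of $Y$, so they die under $q_{\ast}$.

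Second, $\phi_{\ast}f^{\ast}(mN)=g^{\ast}(mN)$. Since $f=g\circ\phi$ wherever $\phi$ is defined, we have $p^{\ast}f^{\ast}(mN)=q^{\ast}g^{\ast}(mN)$ on $W$, as both equal the pullback of $mN$ under $f\circ p=g\circ q$. Pushing forward by $q$ and using $q_{\ast}q^{\ast}=\mathrm{id}$ on Cartier divisors gives $g^{\ast}(mN)$. It remains to check that $q_{\ast}p^{\ast}f^{\ast}(mN)=\phi_{\ast}f^{\ast}(mN)$. This holds by the same observation: the two sides differ only along $p$-exceptional divisors, which are $q$-exceptional.

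The main subtlety is exactly this exceptional-divisor bookkeeping, namely that $q_{\ast}$ kills everything exceptional over $X$, which is where the contraction hypothesis enters. Combining the two identifications yields
\[
m\bigl((D_1)_Y-(D_2)_Y\bigr)=g^{\ast}(mN)+\operatorname{div}_Y(h),
\]
so $(D_1)_Y\sim_{\Q,S}(D_2)_Y$, and for $m=1$ we get $(D_1)_Y\sim_S(D_2)_Y$. The $\Q$-Cartier hypothesis on $(D_i)_Y$ is only needed for the relation $\sim_{\Q,S}$ to make sense as stated.
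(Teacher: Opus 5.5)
Your proof is correct and follows essentially the same route as the paper. Both write $m(D_1-D_2)=\operatorname{div}_X(h)+f^{\ast}(mN)$, pass to a common resolution, and push forward by $q$, using that every $p$-exceptional divisor is $q$-exceptional because $\phi$ is a birational contraction. Your version spells out the identities $\phi_{\ast}\operatorname{div}_X(h)=\operatorname{div}_Y(h)$ and $\phi_{\ast}f^{\ast}(mN)=g^{\ast}(mN)$, which the paper compresses into a single line.
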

	\begin{proof}
		Take a common resolution
		\[
		\xymatrix{
			& W\ar[dl]_p\ar[dr]^q &\\
			X\ar@{-->}[rr]^{\phi} &&Y.
		}
		\]
		After clearing denominators, write
		\[
		m(D_1-D_2)=\operatorname{div}_X(a)+f^{\ast}L,
		\]
		where $m>0$, $a\in\C(X)^{\ast}$, and $L$ is a Cartier divisor on $S$. Thus $p^{\ast}m(D_1-D_2)-\operatorname{div}_W(a)-p^{\ast}f^{\ast}L$ is $p$-exceptional. 
		Since $\phi$ is a birational contraction, it is also $q$-exceptional. Thus we have
		\[
		m\bigl((D_1)_Y-(D_2)_Y\bigr)
		=\operatorname{div}_Y(a)+g^{\ast}L.
		\]
		This proves the first assertion; the linear equivalence case is identical with $m=1$.
	\end{proof}
	\subsection{Bounded families and refinements of parameter spaces}
	\begin{defn}[Flat families with divisorial data]
		Let $T$ be a normal quasi-projective scheme. A \emph{flat family with
			divisorial data over $T$} consists of a flat projective morphism
		\[
		f:X\longrightarrow T
		\]
		together with an effective $\mathbb Q$-divisor
		\[
		B=\sum_{j=1}^m b_j D_j
		\]
		on $X$, where
		
		\begin{enumerate}
			\item $X$ is normal and $D_1,\ldots,D_m$ are distinct prime divisors on $X$;
			
			\item $0\leq b_j\leq 1$ for all $j$;
			\item $D_1,\ldots,D_m$ are faithfully flat with geometrically reduced fibers over $T$;
			\item no irreducible component of a fiber of $f$ is contained in
			$\operatorname{Supp}(B)$;
			
			\item for every closed point $t\in T$, the fiber $X_t$ is normal.
		\end{enumerate}
		
	\end{defn}
	\begin{defn}[Log bounded families]\label{logbounded}
		Let $\mtcl P$ be a collection of pairs $(X,B)$, where $B$ is an effective $\mathbb Q$-divisor. We say that $\mtcl P$ is \emph{log birationally bounded} (respectively, \emph{log bounded}, or \emph{log bounded in codimension one}) if there exist finitely many flat families with divisorial data
		\[
		f_i:(\mtcl X_i,\mtcl B_i)\longrightarrow T_i,
		\qquad 1\leq i\leq N,
		\]
		such that, for every $(X,B)\in\mtcl P$, there are an index $i$, a closed point $t\in T_i$, and a birational map
		\[
		\phi:(\mtcl X_i)_t\dashrightarrow X
		\]
		(respectively, an isomorphism, or a birational map that is an isomorphism in codimension one) for which
		\[
		\operatorname{Supp}(\phi_*^{-1}B)\cup
		\operatorname{Exc}_{\mathrm{div}}(\phi)
		\subseteq
		\operatorname{Supp}((\mtcl B_i)_t).
		\]
		Here $\operatorname{Exc}_{\mathrm{div}}(\phi)$ denotes the union of the prime divisors on $(\mtcl X_i)_t$ contracted by $\phi$. These definitions are equivalent to the standard ones.
	\end{defn}
	
	A collection of morphisms (respectively, rational maps) $\mathcal{F}$ is \textit{bounded} if there exist quasi-projective schemes $\mathcal{X}$ and $\mathcal{Z}$, together with projective morphisms $\mathcal{X} \stackrel{\phi}{\to} \mathcal{Z} \to T$ (respectively, a rational map $\varphi:\mtcl X\dashrightarrow\mtcl Z$ and a projective morphism $\mtcl Z\to T$), where $T$ is of finite type, such that every morphism $X\to Z$ (respectively, rational map $X\dashrightarrow Z$) in $\mathcal F$ is isomorphic to the restriction $\mathcal X_t\to\mathcal Z_t$ (respectively, $\mtcl X_t\dashrightarrow\mtcl Z_t$) for some closed point $t\in T$; in the rational-map case, $\varphi$ is required to restrict to a rational map on $\mtcl X_t$.
	
	The following lemmas record standard refinements of parameterizing families that will be used repeatedly. 
	\begin{lem}\label{hensel}
		Let
		\(
		f:(X,B)\longrightarrow T
		\)
		be a flat family with divisorial data, where $T$ is a quasi-projective normal
		variety. Then there exists a finite stratification
		\[
		T=\coprod_{i=1}^N S_i
		\]
		by locally closed normal subvarieties and finite \'etale morphisms
		\[
		\tau_i:T_i\longrightarrow S_i
		\]
		such that, writing
		\[
		(X_{T_i},B_{T_i}):=(X,B)\times_T T_i
		\]
		and
		\[
		B_{T_i}=\sum_{\beta} b_{\beta}D_{\beta,T_i},
		\]
		the following properties hold:
		
		\begin{itemize}
			\item $S_i,T_i$ are quasi-projective normal varieties;
			\item the morphisms
			\[
			f_i:X_{T_i}\longrightarrow T_i
			\]
			are faithfully flat and projective;
			
			\item each $D_{\beta,T_i}$ is a prime divisor on $X_{T_i}$;
			
			\item each morphism
			\[
			D_{\beta, T_i}\longrightarrow T_i
			\]
			is faithfully flat with geometrically integral fibers; and
			
			\item for every closed point $t\in T_i$, the fiber $(X_{T_i})_t$ is normal.

		\end{itemize}

	\end{lem}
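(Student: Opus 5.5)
The plan is Noetherian induction on $T$, combined with finite étale base change to split the components of $B$ into geometrically integral pieces. The input is a flat family with divisorial data. Since $f$ is flat and projective, it stays faithfully flat and projective after any base change to a locally closed subvariety, provided we discard the (empty-fiber) complement; so that property is automatic. By hypothesis each $D_j\to T$ is faithfully flat with geometrically reduced fibers, and every closed fiber $X_t$ is normal. Geometric integrality of the fibers of each divisor, and primeness of the divisors after base change, are what may fail. These are what the stratification and étale covers must produce.

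First, it suffices to find a dense open $S\subset T$ and a finite étale $T'\to S$ with all the required properties. Noetherian induction on $T\setminus S$, decomposed into irreducible components and replaced by their normal loci (a further open dense stratification), then gives the finite stratification. Note that the hypotheses of a flat family with divisorial data restrict well to strata. Flatness and reducedness of fibers of $D_j$ are preserved by base change, and fibers over closed points are unchanged. The only subtlety is that $X_{S}$ need not be normal as a variety. I will handle this by shrinking $S$ to an open set over which $X_S\to S$ has normal fibers and $S$ is smooth. Then $X_S$ is normal by the standard criterion: a flat morphism with normal fibers over a normal base has normal total space.

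For the generic step, fix $j$ and consider the generic fiber $(D_j)_\eta$ over $K=\C(T)$. It is geometrically reduced, so over $\overline K$ it splits into finitely many geometric irreducible components. These are defined over a finite separable extension $L/K$. Let $T'$ be the normalization of $T$ in $L$ (taking one $L$ for all $j$). Shrinking $T$ to an open $S$, $T'\to S$ is finite étale, since we are in characteristic zero and generically unramified. Over $T'$, each $D_{j,T'}$ decomposes as a union of prime divisors $D_{\beta,T'}$ whose generic fibers are geometrically integral. By the constructibility of the locus of geometrically integral fibers (EGA IV 9.7.7) and generic flatness, after shrinking $S$ again every $D_{\beta,T'}\to T'$ is flat with geometrically integral fibers. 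It is also surjective, since each component dominates $T'$. The coefficient $b_\beta$ is inherited from $b_j$. Distinct $D_j$ share no components, so the decomposition $B_{T'}=\sum b_\beta D_{\beta,T'}$ is well defined. Normality of $X_{T'}$ follows as above, since $X_{T'}\to X_S$ is étale.

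I expect the main obstacle to be bookkeeping rather than depth. We need to ensure that the $D_{\beta,T'}$ are genuinely prime divisors on the normal variety $X_{T'}$, and that shrinking preserves surjectivity onto $T'$ (faithful flatness). Both follow once we shrink so that every irreducible component of $D_{j,T'}$ dominates $T'$. Any non-dominant components are thrown into the closed complement handled by induction.
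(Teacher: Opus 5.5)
Your argument is correct, but it takes a different route from the paper.

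\textbf{The paper's route.} The paper stratifies $T$ so that over each stratum $S_i$ there is a simultaneous log resolution $(Y_i,\Omega_i)\to(X,B)\times_T S_i$ that is log smooth over $S_i$. It then applies Claim~4.38.1 of Koll\'ar's book to pass to a finite \'etale cover $T_i\to S_i$ over which every component of $\Omega_i$ has connected fibers. Since these components are smooth over $T_i$, their fibers are geometrically integral. Irreducibility of each fiber of $D_{\beta,T_i}$ follows because it is the image of the irreducible fiber of its strict transform, and reducedness is part of the hypotheses.

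\textbf{Your route.} You work only at the generic point. You split the geometric components of $(D_j)_\eta$ over a finite extension $L$ of $\mathbb{C}(T)$, normalize in $L$, and shrink until the cover is \'etale. You then spread out geometric integrality and flatness of each component by EGA~IV~9.7.7 and generic flatness.

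\textbf{What each buys.} Your route is more elementary: it needs no resolution of singularities and no monodromy argument for log smooth families. It applies to any flat family of reduced divisors with geometrically reduced fibers. It also explicitly secures flatness of the individual components, which the paper only asserts ``by construction.'' In exchange, the paper's route proves more than needed: after the \'etale cover, every component of a simultaneous log resolution over the stratum has geometrically integral fibers. It also reduces the key point to the fact that, for smooth morphisms, geometric irreducibility of fibers is just connectedness, which a finite \'etale cover can arrange.

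\textbf{Two bookkeeping corrections.} First, on a lower stratum $S'$ of your induction, $D_j|_{S'}$ need not be prime. Second, distinct $D_j$ can acquire common components there. For example, the sections $\{x=0\}$ and $\{x=t\}$ of $\mathbb{P}^1_x\times\mathbb{A}^1_t\to\mathbb{A}^1_t$ coincide over $t=0$, and $t=0$ gets split off as a stratum as soon as some other $D_k$ has a reducible fiber there. So the hypotheses of a flat family with divisorial data do not literally restrict to strata. Also, the coefficient of $D_{\beta,T'}$ is in general the sum of the $b_j$ over all $j$ with $D_{\beta,T'}\subseteq D_{j,T'}$, not a single inherited $b_j$. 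Neither point affects the proof. Your generic step uses only flatness and geometric reducedness of the fibers of the $D_j$, and these do restrict to strata. The lemma also imposes no condition on the $b_\beta$.
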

	\begin{proof}

		By resolution of singularities and Noetherian induction, there exists a stratification \(
		T=\coprod_i S_i
		\)
		such that $S_i$ are quasi-projective normal varieties and for each $S_i$ there is a projective birational morphism
		
		\[
		\mu_i:(Y_i,\Omega_i)
		\longrightarrow
		(X,B)\times_T S_i
		\]
		
		satisfying:
		
		\begin{enumerate}
			\item for every closed point $s\in S_i$,
			\(
			\mu_s:(Y_i)_s\longrightarrow X_s
			\)
			is a birational morphism whose exceptional locus is divisorial with only horizontal components;
			
			\item the morphism
			\(
			(Y_i,\Omega_i)\longrightarrow S_i
			\)
			is log smooth and for every $s\in S_i$ the support of $(\Omega_i)_s$ contains the exceptional locus of $(\mu_i)_s$ and the support of the strict transform of $B_s$.
		\end{enumerate}
		
		By Claim 4.38.1 in~\cite{kovacsSingularitiesMinimalModel}, there exists a finite \'etale cover
		\(
		T_i\longrightarrow S_i
		\)
		such that $E_{\alpha,T_i}$ has connected fibers over $T_i$, where $(Y_{i,T_i},\Omega_{i,T_i})=(Y_i,\Omega_i)\times_{S_i}T_i$, $\Omega_{i,T_i}=\sum_{\alpha}E_{\alpha,T_i}$, and the $E_{\alpha,T_i}$ are prime divisors on $Y_{i,T_i}$. Because every $E_{\alpha,T_i}$ is smooth with connected fibers over $T_i$, its fibers are geometrically integral. Let $D_{\beta,T_i}$ denote an arbitrary component of $B_{T_i}$, and let $E_{\alpha,T_i}$ denote its strict transform in $Y_{i,T_i}$. By construction, $D_{\beta,T_i}$ is flat with geometrically reduced fibers over $T_i$. For every closed point $t\in T_i$, the fiber $(E_{\alpha,T_i})_t$ is a prime divisor containing the strict transform of $(D_{\beta,T_i})_t$. Thus $(D_{\beta,T_i})_t$ is irreducible and hence integral. Since $T_i$ is of finite type, all fibers of $D_{\beta,T_i}\to T_i$ are geometrically integral. The remaining assertions follow directly from the construction.
	\end{proof}
	\begin{lem}[Finite-coefficient refinement]\label{finitecoeffrefinement}
		Let $\mtcl P$ be a log bounded collection of pairs $(X,B)$. If the coefficients of all the divisors $B$ belong to a fixed finite subset $\Phi$ of $\mathbb Q_{\geq0}$, then there exist finitely many flat projective morphisms
		\[
		f_i:\mtcl X_i\longrightarrow T_i
		\]
		between normal quasi-projective varieties, with normal fibers, and effective $\mathbb Q$-divisors
		\[
		\mtcl B_i=\sum_{j=1}^{m_i}b_{ij}\mtcl D_{ij}
		\qquad (b_{ij}\in\Phi),
		\]
		such that every $\mtcl D_{ij}$ is a prime divisor and is faithfully flat over $T_i$ with geometrically integral fibers, and, for every $(X,B)\in\mtcl P$, there are $i,t$ and an isomorphism $X\simeq(\mtcl X_i)_t$ under which
		\[
		B=(\mtcl B_i)_t.
		\]
	\end{lem}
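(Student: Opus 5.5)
Start from the log boundedness of $\mtcl P$ in the sense of Definition~\ref{logbounded}. There are finitely many flat families with divisorial data $(\mtcl X_i,\mtcl B_i)\to T_i$ such that every $(X,B)\in\mtcl P$ is isomorphic to some fiber $(\mtcl X_i)_t$, with $\operatorname{Supp}B\subseteq\operatorname{Supp}(\mtcl B_i)_t$. Replace each $T_i$ by its normalization, stratified if needed, so that flatness and normality of the fibers are kept. Then apply Lemma~\ref{hensel} to each family. After passing to the finitely many finite étale covers $T_{ij}\to S_{ij}$, we obtain families in which every component $\mtcl D_\beta$ of $\operatorname{Supp}\mtcl B$ is a prime divisor, faithfully flat over the base with geometrically integral fibers, and every fiber of $\mtcl X\to T$ is normal. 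Since a finite étale cover of a stratum is surjective, each $(X,B)$ still appears as a fiber over a closed point.

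Next we make the coefficients exact. Over such a refined family $\mtcl X\to T$, let $\mtcl D_1,\dots,\mtcl D_m$ be the components of $\operatorname{Supp}\mtcl B$. For every closed point $t$, the restrictions $(\mtcl D_1)_t,\dots,(\mtcl D_m)_t$ are prime divisors on $\mtcl X_t$. They are pairwise distinct for $t$ in a dense open set, and after a further stratification by Noetherian induction we may assume this holds for all $t$. Because $B$ is supported in $\operatorname{Supp}(\mtcl B)_t$ and its coefficients lie in $\Phi$, we have $B=\sum_j c_j(\mtcl D_j)_t$ for some vector $c=(c_1,\dots,c_m)\in(\Phi\cup\{0\})^m$. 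This is a finite set of possible vectors. For each such $c$, form the family $\mtcl X\to T$ with divisor $\mtcl B_c:=\sum_{c_j\neq0}c_j\mtcl D_j$. Then the union over all strata, all covers and all $c$ is a finite collection of families. Every $(X,B)$ is isomorphic to a fiber with $B=(\mtcl B_c)_t$ exactly, and the coefficients $b_{ij}$ lie in $\Phi$. The normality of $\mtcl X_i$ and $T_i$ holds after stratifying, since $T$ is normal and flat with normal fibers implies $\mtcl X$ is normal.

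The main obstacle is making sure that restriction commutes with taking prime components, that is, that $(\mtcl D_j)_t$ are distinct prime divisors rather than coinciding or non-reduced. This is exactly what Lemma~\ref{hensel} provides through geometric integrality of the fibers. The pairwise distinctness is handled by one more constructible-set stratification: for $j\neq k$, the locus where $(\mtcl D_j)_t=(\mtcl D_k)_t$ is closed in $T$. This follows because both are flat families of subschemes with integral fibers, so they define morphisms to the Hilbert scheme, and the locus is the preimage of the diagonal.
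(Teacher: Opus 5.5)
Your proposal is correct and follows essentially the same route as the paper. The paper realizes log boundedness by finitely many families, refines them with Lemma~\ref{hensel} so that each component of $\operatorname{Supp}\mtcl B_i$ restricts to a prime divisor on every fiber, and then takes all finitely many coefficient vectors in $(\Phi\cup\{0\})^{m_i}$; your extra stratification making the $(\mtcl D_j)_t$ pairwise distinct is harmless but unnecessary, since if two components coincide on a fiber you can give all but one of them coefficient zero.
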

	\begin{proof}
		Choose finitely many families $f_i:(\mtcl X_i,\mtcl B_i)\to T_i$ realizing the log boundedness of $\mtcl P$. By Lemma~\ref{hensel}, we may assume that every component of $\mtcl B_i$ restricts to a prime divisor on each nonempty fiber of $f_i$. Assigning to $\operatorname{Supp}(\mtcl B_i)$ all possible coefficients from $\Phi$, including coefficient zero by omitting a component, gives only finitely many new families. These families have all the asserted properties.
	\end{proof}
	
	\subsection{Locally stable families}
	\begin{defn}[Relative Mumford divisor]
		Let \(f : X \to S\) be a flat morphism of finite type with \(S_2\) fibers of pure dimension \(d\). A subscheme \(D \subset X\) is a \emph{relative Mumford divisor} if there is an open set \(U \subset X\) such that
		\begin{itemize}
			\item \(\operatorname{codim}_{X_s}(X_s \setminus U_s) \ge 2\) for each \(s \in S\),
			\item \(D|_{U}\) is a relative Cartier divisor,
			\item \(D\) is the closure of \(D|_{U}\), and
			\item \(X_s\) is smooth at the generic points of \(D_s\) for every \(s \in S\), where $D_s$ is the closure of $D|_{U_s}$ in $X_s$.
		\end{itemize}
		By saying that \(D|_{U}\) is relatively Cartier, we mean that \(D|_{U}\) is a Cartier divisor on \(U\) whose support contains no irreducible component of any fiber \(U_s\).
		
		If \(D \subset X\) is a relative Mumford divisor for \(f : X \to S\) and \(T \to S\) is a morphism, then the \emph{divisorial pullback} \(D_T\) on \(X_T := X \times_S T\) is the relative Mumford divisor defined as the closure of the pullback of \(D|_{U}\) to \(U_T\). In particular, for each \(s \in S\), the divisor \(D_s = D|_{X_s}\) is the closure of \(D|_{U_s}\), namely the divisorial pullback of \(D\) to \(X_s\).
	\end{defn}
	\begin{defn}[Locally stable family]
		Following~\cite[Chapter~4]{book}, a \textbf{locally stable family} of pairs \((X, B) \to S\) over a reduced Noetherian scheme \(S\) consists of a flat morphism \(X \to S\) of finite type with \(S_2\) fibers and an effective \(\Q\)-divisor \(B\) on \(X\) satisfying
		\begin{itemize}
			\item each prime component of \(B\) is a relative Mumford divisor,
			\item \(K_{X/S} + B\) is \(\Q\)-Cartier, and
			\item \((X_s, B_s)\) is an slc pair for every point \(s \in S\).
		\end{itemize}
		If every fiber is klt, respectively lc, then we call \((X,B)\to S\) a \emph{locally stable family of klt pairs}, respectively a \emph{locally stable family of lc pairs}.
	\end{defn}
	We will need the following lemma. 
	\begin{lem}\label{fibtot}
		Let $f:(X,B)\to S$ be a locally stable family such that $S$ is smooth and the generic fiber of $f$ is normal. Then $X$ is normal and $(X,B)$ is lc. If we further suppose $(X_s,B_s)$ is klt for every $s\in S$, then $(X,B)$ is klt.   
	\end{lem}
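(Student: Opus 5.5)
We first prove that $X$ is normal, and then compare the singularities of $(X,B)$ with those of its fibers by adjunction, doing induction on $\dim S$.

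\emph{Normality.} Since $f$ is flat with $S_2$ fibers and $S$ is smooth, hence $S_2$, the total space $X$ is $S_2$. It is also $R_1$. A codimension one point of $X$ either dominates $S$ or maps to a codimension one point $s\in S$. In the first case it lies on the generic fiber, which is normal. In the second case it is the generic point of a component of $X_s$. The fiber $X_s$ is slc, so it is reduced and generically smooth there. Since $S$ is regular at $s$ and $X\to S$ is flat with smooth fiber at that point, $X$ is smooth there. Serre's criterion then gives that $X$ is normal.

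\emph{Singularities.} The question is local on $S$, so we work near a closed point $s\in S$. Choose a regular system of parameters $t_1,\dots,t_n$ at $s$ with $n=\dim S$. Let $S_1=(t_1=0)$, which is smooth, and let $X_1=f^{-1}(S_1)$. Because $f$ is flat, $X_1$ is a Cartier divisor on $X$. The base change $(X_1,B_1)\to S_1$ is again locally stable, since both local stability and $\mathbb{Q}$-Cartierness of $K_{X/S}+B$ are preserved under base change; see~\cite[Chapter~4]{book}.

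We argue by induction on $\dim S$.
\begin{itemize}
\item If $\dim S=1$, then $X_1=X_s$ and $(X_s,B_s)$ is slc.
\item In general, the induction gives that $(X_1,B_1)$ is slc near $X_s$. If we only know that $X_1$ is $S_2$, we run the argument in the slc/demi-normal setting; this is why the induction is phrased through slc.
\end{itemize}
Because $X_1$ is Cartier, $K_{X_1}+B_1=(K_X+B+X_1)|_{X_1}$, and $K_X+B$ is $\mathbb{Q}$-Cartier since $K_S$ is Cartier. Inversion of adjunction (Kawakita's theorem, in its slc version for non-normal $X_1$ as in~\cite{book}) then shows that $(X,B+X_1)$ is lc near $X_1$. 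Hence $(X,B)$ is lc near $X_s$, since $X_1\ge0$. Because $s$ was arbitrary, $(X,B)$ is lc.

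\emph{The klt case.} If every fiber is klt, then in the induction $(X_1,B_1)$ is klt near $X_s$, using the induction hypothesis for $\dim S_1<\dim S$. It is also normal, by the first step applied to $X_1\to S_1$, whose generic fiber is klt and hence normal. Inversion of adjunction for plt then shows that $(X,B+X_1)$ is plt near $X_1$. So $(X,B)$ is klt near $X_1$, as $a(E,X,B)>a(E,X,B+X_1)\ge -1$ for divisors $E$ centered in $X_1$. Every closed point of $X$ lies in some such $X_1$, so $(X,B)$ is klt.

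\emph{Main obstacle.} The step I expect to be hardest is the non-normal inversion of adjunction in the lc case. There the fibers, and hence $X_1$, are only slc and may fail to be normal. If this becomes problematic, I would instead cite~\cite[Chapter~4, Theorem~4.54]{book} and its corollaries, which show that for a locally stable family over a smooth base, $(X,B+f^*(\text{snc divisor}))$ is slc; adding normality from the first step gives lc.
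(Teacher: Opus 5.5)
Your proof is correct. For the klt assertion it is the paper's argument: induct on $\dim S$, cut by a smooth divisor through $s$, and apply plt inversion of adjunction.

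The difference is in the first assertion. The paper gets it in one step from \cite[Theorem~4.54]{book}. That result says $(X,B)$ is slc and no component of the conductor lies in a fiber, so normality of the generic fiber forces the conductor to vanish. Hence $X$ is normal and $(X,B)$ is lc.

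You split this into an elementary part and a birational part.
\begin{itemize}
\item \emph{Normality.} This comes from Serre's criterion alone. Flatness over a regular base with $S_2$ fibers gives $S_2$. A codimension-one point of $X$ either lies on the normal generic fiber, or is a generic point of a fiber over a codimension-one point of $S$, where flatness over a DVR with reduced fiber gives regularity. This is cleaner than reading normality off the conductor, and it uses nothing about slc pairs.
\item \emph{Log canonicity.} Here your induction is in effect reproving Theorem~4.54. The lemma's hypothesis does not pass to $X_1\to S_1$, whose generic fiber is only slc, so the inductive statement has to be ``$(X,B)$ is slc'', as you note. Below the top level the ambient total space is then only demi-normal. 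So you need inversion of adjunction for an slc Cartier divisor inside a demi-normal space, not just Kawakita's theorem, which assumes a normal ambient variety.
\end{itemize}

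That slc version holds: reduce to Kawakita's theorem on the normalization and compare differents. Once you use it, though, you rely on the same input as the paper. The citation you list as a fallback is therefore the more economical route.
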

	\begin{proof}
		By Theorem 4.54 in~\cite{book}, $(X,B)$ is slc and the conductor divisor does not contain any fiber. Since the generic fiber of $f$ is normal, the conductor divisor must be $0$. Thus $X$ is normal and $(X,B)$ is lc.
		
		Now suppose that $f$ is a family of klt pairs. We prove that $(X,B)$ is klt by induction on $d=\dim S$. The statement is immediate when $d=0$. Suppose that $d\geq 1$, and let $s\in S$ be an arbitrary closed point. Let $D\subset S$ be a smooth prime divisor passing through $s$, and let $f_D:(X_D,B_D)\to D$ be the restriction of $f$ over $D$. By the induction hypothesis, $(X_D,B_D)$ is klt. Thus $(X,B+f^{\ast}D)$ is plt near $f^{-1}D$. In particular, $(X,B)$ is klt near $f^{-1}s$.
	\end{proof}
	\begin{lem}\label{genericls}
		Let $f:(X,B)\to Z$ be a contraction between normal varieties, where $(X,B)$ is a klt pair. Then there exists a nonempty open subset $U\subset Z$ such that $f$ is locally stable with klt fibers over $U$.
	\end{lem}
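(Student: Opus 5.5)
Our plan is to shrink $Z$ step by step, each time removing a proper closed subset, until every condition in the definition of a locally stable family with klt fibers holds. First, generic smoothness lets us assume $Z$ is smooth. Generic flatness lets us assume $f$ is flat. Since $X$ is normal, the generic fiber $X_\eta$ is normal, so after shrinking further we may assume every fiber $X_z$ is normal; in particular the fibers are $S_2$ and of pure dimension. For the boundary, write $B=\sum b_jD_j$. Components of $B$ that are vertical map to proper closed subsets of $Z$, so we discard their images. The horizontal components $D_j$ dominate $Z$, so after shrinking they are flat over $Z$ with generically reduced fibers.

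Next we take a log resolution $\mu:Y\to X$ of $(X,B)$, and write $K_Y+B_Y=\mu^{\ast}(K_X+B)$ with all coefficients of $B_Y$ less than $1$, using that $(X,B)$ is klt. By generic smoothness applied to $Y$ and to all strata of the snc divisor $\operatorname{Exc}(\mu)\cup\operatorname{Supp}\mu^{-1}_{\ast}B$, we may shrink $Z$ so that $(Y,\operatorname{Supp}B_Y)\to Z$ is log smooth. Then for each closed point $z$ the restriction $\mu_z:Y_z\to X_z$ is a log resolution of $(X_z,B_z)$, and by adjunction
\[
K_{Y_z}+B_Y|_{Y_z}=\mu_z^{\ast}(K_{X_z}+B_z).
\]
This uses that $X_z$ is a normal Cartier-divisor intersection in $X$ near general points, and that $K_{X/Z}$ restricts to $K_{X_z}$ because $Z$ is smooth, so the fibers are cut out by local equations. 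The coefficients of $B_Y|_{Y_z}$ are exactly those of $B_Y$, since the strata are smooth and transverse to the fibers. Therefore every fiber $(X_z,B_z)$ is klt. The identity $B_z=B|_{X_z}$ as divisorial pullback follows from the flatness of the $D_j$, the reducedness of their fibers, and the fact that $X_z$ is smooth at the generic points of $D_{j,z}$ (the singular locus of $X$ has codimension $\geq 2$ and, after shrinking, meets each fiber in codimension $\geq2$).

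The remaining conditions are that each $D_j$ is a relative Mumford divisor and that $K_{X/Z}+B$ is $\Q$-Cartier. The second is immediate: $K_X+B$ is $\Q$-Cartier and $K_Z$ is Cartier, since $Z$ is smooth. For the first, let $U\subset X$ be the smooth locus of $X$ minus the singular locus of $X_z$'s generic behaviour. More concretely, take $U$ to be the open set where both $X$ and $f$ are smooth; this is possible because $f$ is generically smooth on the normal fibers. After shrinking $Z$, $X_z\setminus U_z$ has codimension $\geq2$ in every fiber. This follows by upper semicontinuity of fiber dimension applied to $X\setminus U\to Z$, whose generic fiber has codimension $\geq2$ in $X_\eta$ since $X_\eta$ is normal. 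On $U$, each $D_j$ is Cartier and, being flat, is relatively Cartier.

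The main obstacle is the fiberwise adjunction step, that is, checking that the discrepancies of $(X_z,B_z)$ are computed by restricting $B_Y$. We will handle it by working on the open set where $f\circ\mu$ is log smooth and invoking the standard inversion-free argument: a log resolution in a smooth family restricts to log resolutions, and pullback commutes with restriction for $\Q$-Cartier divisors. Alternatively, we can cite the known statement (e.g.\ \cite[Chapter~4]{book}) that a generic fiber which is klt spreads out to a locally stable family over an open set.
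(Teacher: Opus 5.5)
Your proof is correct, but it takes a different route from the paper's.

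\textbf{What is shared.} Both arguments begin by shrinking $Z$ until a log resolution of $(X,B)$ is log smooth over the base.

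\textbf{What the paper does next.} The paper never checks the definition of local stability clause by clause. It fixes a closed point $p$ and an snc divisor $D$ on $U$ having $p$ as a stratum. Kollár's criterion \cite[Theorem~4.54]{book} shows that $(Y_U,\Gamma_U+g_U^{\ast}D)$ is lc. The inequality $K_Y+\Gamma\geq\varphi^{\ast}(K_X+B)$ pushes this down to $(X_U,B_U+f_U^{\ast}D)$, and a second application of Theorem~4.54 yields local stability of $f_U$. Klt-ness of $(X_p,B_p)$ is then obtained by cutting down along the components of $D$ one at a time, using Lemma~\ref{fibtot}, inversion of adjunction and adjunction.

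\textbf{What you do instead.} You verify flatness, normal (hence $S_2$) fibers, the Mumford-divisor condition via the smooth locus of $f$, and $\mathbb{Q}$-Cartierness of $K_{X/Z}+B$ directly. You get klt fibers by restricting the crepant identity $K_Y+B_Y=\mu^{\ast}(K_X+B)$ to the fibers of a simultaneous log resolution.

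\textbf{Trade-off.} Your route is more elementary: it uses neither inversion of adjunction nor the local stability criterion, and it displays the fiber discrepancies explicitly. The price is the bookkeeping that Theorem~4.54 absorbs.

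\textbf{Points to make explicit in a write-up.}
\begin{enumerate}
\item[(i)] $X_\eta$ is geometrically normal because we are in characteristic zero. This is what lets you spread normality to all fibers over an open set.
\item[(ii)] Fibers are connected, since $f_{\ast}\mathcal{O}_X=\mathcal{O}_Z$, hence irreducible once normal. After shrinking, $\mu(\operatorname{Exc}\mu)$ meets each fiber in codimension at least two. Together these give that $\mu_z$ is birational and that restricted $\mu$-exceptional divisors are $\mu_z$-exceptional, which the crepant comparison on $Y_z$ needs.
\item[(iii)] The equality $(K_{X/Z}+B)|_{X_z}=K_{X_z}+B_z$ holds because both sides agree on the smooth locus of $f$, whose complement in the normal fiber $X_z$ has codimension at least two.
\end{enumerate}

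\textbf{A simplification.} Once $f$ is flat with normal fibers, the non-smooth locus of $f$ on $X_z$ is exactly $\operatorname{Sing}(X_z)$. So your fiber-dimension argument for the Mumford condition is unnecessary.
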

	\begin{proof}
		Choose a birational morphism $\varphi:(Y,\Gamma)\to (X,B)$ such that $(Y,\Gamma)$ is a log smooth klt pair and $K_Y+\Gamma\geq \varphi^{\ast}(K_X+B)$. Let $g:(Y,\Gamma)\to Z$ be the composition of $\varphi$ and $f$. There exists a nonempty open subset $U\subset Z$ such that $U$ is smooth and $g$ is relatively log smooth over $U$. Let $f_U:(X_U,B_U)\to U$ and $g_U:(Y_U,\Gamma_U)\to U$ denote the restrictions of $f$ and $g$ over $U$.
		
		Let $p$ be an arbitrary closed point of $U$, and let $D$ be a reduced simple normal crossing divisor on $U$ such that $p$ is a stratum of $D$. By definition, $g_U$ is a locally stable family over $U$. Thus, by Theorem 4.54 in~\cite{book}, $(Y_U,\Gamma_U+g_U^{\ast}D)$ is lc. Since $K_Y+\Gamma\geq \varphi^{\ast}(K_X+B)$, the pair $(X_U,B_U+f_U^{\ast}D)$ is also lc. Another application of Theorem 4.54 in~\cite{book} shows that $f_U:(X_U,B_U)\to U$ is locally stable.
		
		Let $D_1$ be an irreducible component of $D$, and let $f_{D_1}:(X_{D_1},B_{D_1})\to D_1$ and $g_{D_1}:(Y_{D_1},\Gamma_{D_1})\to D_1$ be the restrictions of $f_U$ and $g_U$ over $D_1$. By Lemma~\ref{fibtot}, $(Y_{D_1},\Gamma_{D_1})$ is klt. By inversion of adjunction, $(Y_U,\Gamma_U+g_U^{\ast}D_1)$ is plt. Thus $(X_U,B_U+f_U^{\ast}D_1)$ is also plt, and adjunction implies that $(X_{D_1},B_{D_1})$ is klt. Moreover, by construction, $\varphi_{D_1}:(Y_{D_1},\Gamma_{D_1})\to X_{D_1}$ is a simultaneous log resolution of $(X_{D_1},B_{D_1})$ over $D_1$, with $K_{Y_{D_1}}+\Gamma_{D_1}\geq \varphi_{D_1}^{\ast}(K_{X_{D_1}}+B_{D_1})$. Replacing $(Z,D)$ successively by $(D_1,(D-D_1)|_{D_1})$ and repeating the argument shows that the fiber $(X_p,B_p)$ over $p=D_1\cap\cdots\cap D_r$ is klt.
	\end{proof}
	
	\subsection{Bounded log Calabi--Yau fibrations}
	\begin{lem}\label{hanjiang}
		Let $\mtcl P$ be a collection of data
		\[
		\pi:(X,B),L_X\longrightarrow(Z,L_Z),
		\]
		where $(X,B)$ is a projective pair, $L_X$ and $L_Z$ are very ample divisors, $\pi:X\to Z$ is a contraction, and $K_X+B\sim_{\Q,Z}0$. Suppose that there exist a positive integer $d$, positive numbers $r,\varepsilon$, and a finite set $I\subset\Q$ such that, for every datum in $\mtcl P$,
		\begin{itemize}
			\item $(X,B)$ is $\varepsilon$-lc of dimension $d$ and the coefficients of $B$ belong to $I$;
			\item $\operatorname{vol}_X(L_X)\leq r$; and
			\item $L_X-B_{\mathrm{red}}$ and $L_X-\pi^{\ast}L_Z$ are pseudo-effective.
		\end{itemize}
		Then  there exist finitely many projective families
		\[
		\begin{tikzcd}
			{(\mathcal X_i,\mathcal B_i)},\mtcl L_{\mtcl X_i}
			\arrow[r,"p_i"]
			\arrow[dr,"h_i"']
			&
			{(\mathcal Z_i,\mtcl L_{\mtcl Z_i})}
			\arrow[d,"g_i"]
			\\
			& T_i
		\end{tikzcd}
		\]
		such that
		\begin{itemize}
			\item $T_i$ and $\mathcal Z_i$ are normal quasi-projective varieties;
			\item $(\mathcal X_i,\mathcal B_i)$ is klt;
			\item $(\mathcal X_i,\mathcal B_i)\to T_i$ is locally stable with klt fibers;
			\item all three morphisms are projective and surjective;
			\item $\mathcal X_i,\mathcal B_i,\mtcl L_{\mtcl X_i},\mathcal Z_i$ are flat over $T_i$;
			\item every irreducible component of $\operatorname{Supp}(\mathcal B_i)$ is faithfully flat over $T_i$ with geometrically integral fibers;
			\item $K_{\mathcal X_i}+\mathcal B_i\sim_{\Q,\mathcal Z_i}0$;
			\item for every $\pi:(X,B),L_X\to (Z,L_Z)$ in $\mtcl P$, there exist some $i$ and $t\in T_i$ such that it is isomorphic to
			\[
			p_{i,t}:((\mtcl X_i)_t,(\mtcl B_i)_t),(\mtcl L_{\mtcl X_i})_t
			\longrightarrow
			((\mtcl Z_i)_t,(\mtcl L_{\mtcl Z_i})_t).
			\]
			Here we require the isomorphism to preserve the morphism, the pair, and the polarizations.
		\end{itemize}
	\end{lem}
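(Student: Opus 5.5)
Since $L_X$ is very ample with $\operatorname{vol}_X(L_X)=L_X^d\le r$, the varieties $X$ embedded by $|L_X|$ have bounded degree in a projective space of bounded dimension (by $h^0(L_X)\le L_X^d+d$), so they lie in finitely many Hilbert schemes. The hypotheses that $L_X-B_{\mathrm{red}}$ and $L_X-\pi^*L_Z$ are pseudo-effective bound the degrees $L_X^{d-1}\cdot B_{\mathrm{red}}\le r$ and $L_X^{d-1}\cdot\pi^*L_Z\le r$. The first gives log boundedness of $(X,B_{\mathrm{red}})$ via Chow varieties of divisors of bounded degree. For the second, consider the morphism $(\mathrm{id},\pi):X\to X\times Z\subset\mathbb P^{N}\times\mathbb P^{M}$, whose graph $\Gamma_\pi\simeq X$. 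It has bidegrees $(L_X+\pi^*L_Z)^d$, bounded by $2^d$ times mixed terms, and each mixed term $L_X^{d-k}\cdot(\pi^*L_Z)^k\le L_X^d$ by pseudo-effectivity of $L_X-\pi^*L_Z$ and nefness of both divisors. The dimension $M+1=h^0(Z,L_Z)\le h^0(X,\pi^*L_Z)\le h^0(X,L_X)$ is also bounded. Thus the graphs together with $B_{\mathrm{red}}$, and $Z=\pi(X)$ with $L_Z=\mathcal O(1)$, are parametrized by a finite-type subscheme of a product of Hilbert/Chow schemes. Over it we get universal families $\mathcal X\to\mathcal Z\to T$ with universal polarizations $\mathcal L_{\mathcal X},\mathcal L_{\mathcal Z}$ given by the hyperplane classes, and a universal divisor whose support contains $B$. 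Since the coefficients of $B$ lie in the finite set $I$, Lemma~\ref{finitecoeffrefinement} (via Lemma~\ref{hensel}) lets us pass to finitely many families in which $B$ is exactly $(\mathcal B)_t$, with prime components faithfully flat with geometrically integral fibers, and with $T$ normal, fibers normal, and everything flat.

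Next I cut $T$ to the locus of data actually occurring. The conditions that $\mathcal X_t\to\mathcal Z_t$ is a contraction with normal $\mathcal Z_t$, that $(\mathcal X_t,\mathcal B_t)$ is $\varepsilon$-lc, and that $K+\mathcal B\sim_{\Q}0$ over $\mathcal Z_t$ are constructible in $t$, after stratifying and taking the closure of the points coming from $\mathcal P$. The $\varepsilon$-lc condition is handled by stratifying so that a simultaneous log resolution exists, as in the proof of Lemma~\ref{hensel}. The relative triviality condition is handled by noting that $K_{\mathcal X_t}+\mathcal B_t\sim_{\Q}\pi_t^*N_t$ with bounded Cartier index. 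I take only the strata containing points of $\mathcal P$, so by Noetherian induction finitely many strata suffice. After replacing each stratum by its normalization, I obtain finitely many normal $T_i$, and flatness is retained by further flattening stratification.

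To get local stability and klt total space, I apply Lemma~\ref{genericls} to $\mathcal X\to T$ over each stratum. Where the fibers are klt, and by Noetherian induction on the complement, I get a stratification with $(\mathcal X_i,\mathcal B_i)\to T_i$ locally stable with klt fibers. After shrinking to smooth strata, Lemma~\ref{fibtot} then gives that $(\mathcal X_i,\mathcal B_i)$ is klt. Replacing $T_i$ by a normal resolution stratification keeps $T_i$ smooth, hence normal, and $\mathcal Z_i$ normal because its fibers are normal and it is flat over a smooth base.

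The main obstacle is upgrading the fiberwise relation $K_{\mathcal X_t}+\mathcal B_t\sim_{\Q,\mathcal Z_t}0$ to the global relation $K_{\mathcal X_i}+\mathcal B_i\sim_{\Q,\mathcal Z_i}0$. My first attempt is this: since $K_{\mathcal X_i/T_i}+\mathcal B_i$ is $\Q$-Cartier by local stability, and for some bounded $m$ its $m$-th multiple restricts trivially to general fibers of $\mathcal X_t\to\mathcal Z_t$ for all $t$, the divisor $m(K_{\mathcal X_i}+\mathcal B_i)$ is $\Q$-linearly trivial over the generic point of $\mathcal Z_i$. I then try to show that $p_{i*}\mathcal O(m(K+\mathcal B))$ is a line bundle whose pullback recovers $m(K+\mathcal B)$, using cohomology and base change together with the fiberwise isomorphism $\mathcal O(m(K_t+B_t))\simeq\pi_t^*\mathcal O(mN_t)$. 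The fiberwise isomorphism requires stratifying further so that $h^0$ is constant. If that fails, I fall back on the canonical bundle formula on $\mathcal X_i\to\mathcal Z_i$ to exhibit the vertical discrepancy, and show it vanishes because it vanishes on every fiber over $T_i$.
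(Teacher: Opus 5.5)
\textbf{Where the proposal stands.} Your boundedness and stratification steps match the paper's in substance. You embed the graph of $(\mathrm{id},\pi)$, where the paper uses the two line bundles $\mathcal L_{\mathcal X_i},\mathcal L_i$ and the evaluation map $h_i^{\ast}h_{i\ast}\mathcal L_i\to\mathcal L_i$; either works.

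One small slip: $h^0(X,\pi^{\ast}L_Z)\le h^0(X,L_X)$ does not follow from pseudo-effectivity of $L_X-\pi^{\ast}L_Z$. Bound $h^0(Z,L_Z)\le L_Z^{\dim Z}+\dim Z$ instead, and use
\[
L_Z^{\dim Z}\le L_X^{d-\dim Z}\cdot(\pi^{\ast}L_Z)^{\dim Z}\le L_X^d\le r .
\]

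\textbf{The gap.} The genuine gap is the last step, which you flag as the main obstacle but do not prove. Your primary route has three problems.
\begin{enumerate}
\item It needs a uniform $m$ with $m(K_{\mathcal X_t}+\mathcal B_t)\sim p_t^{\ast}(\text{Cartier})$ for all relevant $t$. A bounded Cartier index of $K+B$ does not give this; it only makes $m(K+B)$ Cartier. For example, an elliptic surface with a multiple fiber has $K_X$ Cartier but $K_X\not\sim_Z 0$.
\item More seriously, $p_i:\mathcal X_i\to\mathcal Z_i$ is not flat, since its fibers can jump in dimension. So cohomology and base change says nothing about how $p_{i\ast}\mathcal O(m(K+\mathcal B))$ restricts to $\mathcal Z_t$.
\item Base change for the flat map $h_i$ only gives $h^0(\mathcal X_t,\mathcal L_t)=h^0(\mathcal Z_t,\mathcal M_t)$. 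That does not produce a line bundle on $\mathcal Z_i$ pulling back to $m(K+\mathcal B)$.
\end{enumerate}
The fallback does not close the gap either. The canonical bundle formula presupposes that $K+\mathcal B$ is $\mathbb Q$-linearly a pullback up to a vertical divisor $E$. The restriction of $E$ to each $\mathcal X_t$ is only $\mathbb Q$-linearly a pullback, not zero. You would again have to descend a relatively numerically trivial divisor over a possibly non-$\mathbb Q$-factorial $\mathcal Z_i$. The basepoint-free theorem does not help here, because $p_i$ is not birational, so nothing is $p_i$-big.

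\textbf{The missing idea.} Aim only for numerical triviality, then invoke abundance for numerically trivial pairs.
\begin{enumerate}
\item Since $h_i=g_i\circ p_i$, every $p_i$-contracted curve lies in some fiber $\mathcal X_t$.
\item By local stability over the smooth base $T_i$, $(K_{\mathcal X_i}+\mathcal B_i)|_{\mathcal X_t}\sim_{\mathbb Q}K_{\mathcal X_t}+\mathcal B_t$.
\item After a stratification making $K_{\mathcal X_t}+\mathcal B_t\equiv_{\mathcal Z_t}0$ hold for every $t\in T_i$, you get $K_{\mathcal X_i}+\mathcal B_i\equiv_{\mathcal Z_i}0$. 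Numerical triviality involves no index.
\item You have already obtained that $(\mathcal X_i,\mathcal B_i)$ is klt via Lemma~\ref{fibtot}. The relative abundance theorem for numerically trivial lc pairs, \cite[Corollary~1.6]{haconxu2016brepresentation}, then gives $K_{\mathcal X_i}+\mathcal B_i\sim_{\mathbb Q,\mathcal Z_i}0$.
\end{enumerate}
This needs no uniform $m$ and no base change for $p_i$, and it is exactly how the paper finishes.
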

	\begin{proof}
		The boundedness argument is the same as in~\cite[Lemma~2.8]{Han_2024}. Boundedness of Chow schemes gives finitely many projective families $h_i:\mathcal X_i\to T_i$, reduced divisors $\mathcal E_i$ on $\mathcal X_i$, and line bundles $\mtcl L_{\mtcl X_i},\,\mathcal L_i$ on $\mtcl X_i$ such that for every $(X,B),L_X\to(Z,L_Z)$ there exists $t\in T_i$, with
		\[
		X\simeq (\mtcl X_i)_{t},
		\quad
		\operatorname{Supp}(B)\subseteq\operatorname{Supp}((\mathcal E_i)_t),
		\qquad
		L_X\simeq(\mtcl L_{\mtcl X_i})_t,
		\qquad
		\pi^{\ast}L_Z\simeq(\mathcal L_i)_t.
		\]
		After finite stratifications and finite \'etale base changes, Lemma~\ref{hensel} allows us to write
		\[
		\operatorname{Supp}(\mathcal E_i)=\bigcup_{\beta=1}^{m_i}\mathcal D_{i,\beta},
		\]
		where every $\mathcal D_{i,\beta}$ is a prime divisor, faithfully flat over $T_i$, with geometrically integral fibers. Replacing $I$ by $I\cup\{0\}$ and taking the finitely many divisors
		\[
		\mathcal B_{i,\mathbf b}:=
		\sum_{\beta=1}^{m_i}b_\beta\mathcal D_{i,\beta},
		\qquad \mathbf b\in I^{m_i},
		\]
		we may assume that every relevant fiber boundary is exactly $(\mathcal B_i)_t=B$.
		
		By Noetherian induction and~\cite[Chapter~III, Corollary~12.9]{hartshorneAlgebraicGeometry1977}, after further stratification, the evaluation map
		\[
		h_i^{\ast}(h_i)_{\ast}\mathcal L_i\longrightarrow\mathcal L_i
		\]
		is surjective and commutes with base change. Thus $\mathcal L_i$ defines $p_i:\mathcal X_i\to\mathcal Z_i$ over $T_i$, together with a relatively ample line bundle $\mtcl L_{\mtcl Z_i}$ satisfying $p_i^{\ast}\mtcl L_{\mtcl Z_i}=\mathcal L_i$. The fibers recover $\pi:X\to Z$ and $L_Z$. By further stratification, using~\cite[Theorem~12.1.6 and Proposition~6.8.3]{PMIHES_1967__32__5_0}, we may assume that $T_i$ and $\mathcal Z_i$ are normal and that all the displayed data are flat over $T_i$.
		
		By~\cite[Proposition~2.4]{hacon2014boundednesslogcalabiyaupairs}, applied with $\varepsilon/2$, Lemma~\ref{genericls}, smooth stratification, and Noetherian induction, we may assume that each $T_i$ is smooth and
		\[
		(\mathcal X_i,\mathcal B_i)\longrightarrow T_i
		\]
		is locally stable with klt fibers. 
		By Lemma 4.52 in \cite{book}, after possibly passing to a stratification on $T_i$, we may assume $K_{\mtcl X_i}+\mtcl B_i\equiv_{\mtcl Z_i} 0$. Lemma~\ref{fibtot} and \cite[Corollary~1.6]{haconxu2016brepresentation} then give
		\[
		K_{\mathcal X_i}+\mathcal B_i\sim_{\Q,\mathcal Z_i}0.
		\]

	\end{proof}
	\subsection{Moduli of traditional stable minimal models}
	Here we use a descent-corrected version of the moduli of traditional stable minimal models studied in~\cite{jiang2025boundednesskltgoodminimal}.
	\begin{defn}[Traditional stable minimal models]\label{stmm}
		A \emph{traditional stable minimal model} \((X, B)\), \(A\) over an algebraically closed field \(\mtbb{K}\) of characteristic zero consists of a connected projective pair \((X, B)\) and a Cartier divisor \(A\) (not necessarily effective) such that
		\begin{itemize}
			\item \((X, B)\) is klt,
			\item \(K_X + B\) is semi-ample defining a contraction \(f : X \to Z\), and
			\item \(K_X + B + tA\) is ample for some \(t > 0\).
		\end{itemize}
		Let $d\in \N$, let $\Phi\subset \Q^{\geq0}$ and $\Gamma\subset \Q^{>0}$ be finite sets, and let $\sigma\in \Q[t]$ be a polynomial. A \emph{$(d, \Phi, \Gamma, \sigma)$-traditional stable minimal model} is a traditional stable minimal model \((X, B)\), \(A\) such that
		\begin{itemize}
			\item \(\dim X = d\),
			\item the coefficients of \(B\) are in \(\Phi\),
			\item \(\operatorname{vol}_F(A|_F) \in \Gamma\), where \(F\) is any general fiber of \(f : X \to Z\), and
			\item \((K_X + B + tA)^d = \sigma(t)\).
		\end{itemize}
	\end{defn}
	\begin{defn}
		Let \(S\) be a reduced scheme over \(\K\).
		For a projective flat morphism $X\to S$, let $\operatorname{Pic}_{X/S}$ denote the relative Picard functor, namely the fppf sheaf associated to
		\[
		T\longmapsto
		\operatorname{Pic}(X_T)/p_T^{\ast}\operatorname{Pic}(T),
		\qquad p_T:X_T\to T.
		\]
		\begin{enumerate}
			\item When \(S = \operatorname{Spec} \mathbb{K}_1\) for a field \(\mathbb{K}_1\), we define a \textbf{traditional stable minimal model} over \(\mathbb{K}_1\) as in Definition~\ref{stmm}, replacing \(\mathbb{K}\) by \(\mathbb{K}_1\) and connectedness by geometric connectedness.
			\item For general \(S\), a \textbf{family of traditional stable minimal models} over \(S\) consists of a projective morphism \(X \to S\), a \(\Q\)-divisor \(B\), and a class
			\[
			[A]\in\operatorname{Pic}_{X/S}(S)
			\]
			such that
			\begin{itemize}
				\item \((X, B) \to S\) is a locally stable family,
				\item for every geometric point $\bar s\to S$, the pair $(X_{\bar s},B_{\bar s})$ together with any line bundle representing $[A]_{\bar s}$ is a traditional stable minimal model over $k(\bar s)$.
			\end{itemize}
			We denote this family by \((X, B),[A] \to S\).
			\item We define the moduli functor $\mtcl M_{TS}(d, \Phi, \Gamma, \sigma)$ of \((d, \Phi, \Gamma, \sigma)\)-traditional stable minimal models from the category of reduced \(\K\)-schemes to the category of groupoids as follows:
			\begin{itemize}
				\item \textbf{On objects:} for a reduced \(\K\)-scheme \(S\), we take
				\begin{align*}
					&\mtcl M_{TS}(d, \Phi, \Gamma, \sigma)(S)\\
					=& \{\text{family of } (d, \Phi, \Gamma, \sigma)\text{-traditional stable minimal models over } S\}.
				\end{align*}

				An isomorphism
				\[
				((X',B'),[A'])\longrightarrow((X,B),[A])
				\]
				over $S$ is an isomorphism $\alpha_X:(X',B')\to(X,B)$ over $S$ such that
				\[
				[A']=\alpha_X^{\ast}[A]
				\quad\text{in }\operatorname{Pic}_{X'/S}(S).
				\]
				\item \textbf{On morphisms:} pullback along a morphism $\alpha:T\to S$ is given by divisorial pullback of $B$ and the pullback class $[A]_T\in\operatorname{Pic}_{X_T/T}(T)$. Morphisms in the resulting fibered category are the corresponding Cartesian isomorphisms preserving these data.
			\end{itemize}
		\end{enumerate}
	\end{defn}
	
	\begin{thm}[Corrected moduli theorem]\label{tsmoduli}
		The stack \(\mtcl M_{TS}(d, \Phi, \Gamma, \sigma)\) is a separated Deligne--Mumford stack of finite type and admits a coarse moduli space \(M_{TS}(d, \Phi, \Gamma, \sigma)\), which is a separated algebraic space of finite type.
	\end{thm}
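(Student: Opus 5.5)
We will follow the strategy of~\cite{jiang2025boundednesskltgoodminimal} and correct only the descent step for the polarization. The plan is to show, in order, that $\mtcl M_{TS}(d,\Phi,\Gamma,\sigma)$ is a stack in the fppf topology, that it is algebraic of finite type, that its diagonal is finite and unramified, and finally to invoke the Keel--Mori theorem. We handle the polarization as a section $[A]\in\operatorname{Pic}_{X/S}(S)$ rather than as an honest line bundle. This is the point of the correction: $\operatorname{Pic}_{X/S}$ is an fppf sheaf, so the polarization descends along fppf covers. The triple of data $(X\to S, B, [A])$ descends because the pair $(X,B)$ descends. Indeed, $K_{X/S}+B+tA$ is relatively ample for a uniform $t$ on each bounded stratum, so $X$ carries a canonical relatively ample class that is compatible with descent data, and relative Mumford divisors descend as closed subschemes. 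Local stability and the fiberwise conditions are fppf local on $S$.

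For algebraicity and finite type, we first use the boundedness of $(d,\Phi,\Gamma,\sigma)$-traditional stable minimal models. This is the input of~\cite{jiang2025boundednesskltgoodminimal} and rests on the boundedness of good minimal models with fixed volume of $A|_F$ and fixed polynomial $\sigma$. It supplies a uniform $t_0$ and $m$ such that $L:=m(K_X+B+t_0A)$ is very ample without higher cohomology, and such that its Hilbert polynomial lies in a finite set. The subfunctor where $X\subset\mathbb P^N$ is embedded by $L$ is then represented by a locally closed subscheme $H$ of a Hilbert scheme. We carve out $H$ by the following steps: Koll\'ar's locally stable family of pairs (the $K$-flatness / Mumford-divisor condition from~\cite[Chapter~4]{book}); the locally closed conditions that fibers be klt and that $K+B$ be semiample of the right type; and the choice of $[A]$, which is parametrized by a locally closed piece of the relative Picard scheme satisfying $m(K+B+t_0[A])=[\O(1)]$. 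This gives the presentation $\mtcl M_{TS}=[H/\mathrm{PGL}_{N+1}]$, which is algebraic and of finite type.

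For separatedness and Deligne--Mumford, the key is the automorphism group. Automorphisms of $(X,B)$ preserving $[A]$ also preserve the ample class $K_X+B+t_0A$, so they form a linear algebraic group. We must show that this group is finite, which means showing that its identity component is trivial. A vector field on a klt pair with $K_X+B$ semiample would have to be tangent to the fibers of $X\to Z$. On a general fiber $F$ the pair $(F,B_F)$ is a klt Calabi--Yau, and the automorphisms fixing $[A|_F]$ form a finite group, just as for polarized Calabi--Yau varieties. Characteristic zero then gives unramifiedness. Separatedness follows from a valuative argument over a DVR. Two families of traditional stable minimal models with isomorphic generic fibers have isomorphic total spaces, by uniqueness of lc models of $K+B+t_0A$. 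The isomorphism then respects $[A]$ because $\operatorname{Pic}$ is separated over the normal base, and $X$ is normal by Lemma~\ref{fibtot}. Keel--Mori then gives the coarse space, a separated algebraic space of finite type.

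We expect the main obstacle to be the Deligne--Mumford step: proving finiteness of the polarized automorphism groups when $K_X+B$ is not big, and controlling the vertical directions over $Z$. Our first attempt will be to reduce to the general fiber as above, and then to argue that an automorphism acting trivially on $Z$ and on a general fiber is trivial.
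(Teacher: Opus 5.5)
Your overall architecture (polarization as a section of $\operatorname{Pic}_{X/S}$, a quotient presentation coming from boundedness, finite stabilizers, a valuative criterion, then Keel--Mori) is the one the paper relies on. The paper gives no independent argument: it defers to Jiang's Appendix~A with the polarization reinterpreted as a relative Picard class. There the presentation is $[H_9/(\mathrm{PGL}_{n+1}\times\mathrm{PGL}_{l+1})]$, built from two embeddings, whereas you use one embedding plus a torsor under torsion in the relative Picard scheme; that difference is harmless.

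The genuine gap is the Deligne--Mumford step, which your proposal does not carry out. You assert without justification that a vector field on a klt pair with $K_X+B$ semiample must be tangent to the fibers of $X\to Z$, and this is exactly the horizontal direction you later flag as the obstacle. The fiberwise input, that a polarized klt Calabi--Yau pair $(F,B_F)$ has finitely many automorphisms ``as for polarized Calabi--Yau varieties'', is also unproved for singular $F$ with $B_F\neq0$. That statement must genuinely use the polarization, since for an abelian fiber $\operatorname{Aut}^0(F)$ is positive-dimensional. So your reduction to the general fiber trades one unproved finiteness statement for two.

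The missing idea is to exclude $\mathbb G_a$ and $\mathbb G_m$ directly on $X$, using only that $(X,B)$ is klt and $K_X+B$ is pseudo-effective; no analysis of $Z$ or of fibers is needed.
\begin{itemize}
\item Let $G=\operatorname{Aut}(X,B,[A])$, which is linear as you observe. If $G^0\neq1$, it contains a copy of $\mathbb G_a$ or $\mathbb G_m$ acting faithfully on $X$ and preserving $B$.
\item Take an equivariant resolution $\pi:Y\to X$ on which the rational quotient by this subgroup is a morphism $Y\to Q$. Write $K_Y+B_Y=\pi^*(K_X+B)$; every coefficient of $B_Y$ is $<1$ and every component is invariant.
\item A general fiber $\ell\simeq\mathbb P^1$ of $Y\to Q$ is an orbit closure, so $K_Y\cdot\ell=-2$.
\item The components of $B_Y$ meeting $\ell$ dominate $Q$, so they are generically finite over $Q$ and hence pointwise fixed. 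They therefore meet $\ell$ transversally, each point lying on a single component, in the at most two fixed points of the action on $\ell$.
\item Thus $(K_Y+B_Y)\cdot\ell=-2+B_Y\cdot\ell<0$ on a covering family, contradicting pseudo-effectivity of $\pi^*(K_X+B)$.
\end{itemize}
Hence $G$ is finite, and it is reduced since we are in characteristic zero.

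A smaller point concerns your descent step. A class in $\operatorname{Pic}_{X/S}(S)$ does not by itself give effective descent of the projective scheme $X$, because local representatives only glue up to twists from the base. You need an honest relatively ample line bundle with a cocycle-compatible descent datum. The normalization $L^{\otimes ar}\otimes p^{*}(\det p_{*}L^{\otimes a})^{-1}$ from Lemma~\ref{globalpicardclass} provides one.
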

	\begin{rem}
		The definition in \cite{jiang2025boundednesskltgoodminimal} uses a line bundle on $X$ considered up to tensoring by a line bundle pulled back from $S$. Such representatives do not in general satisfy fppf descent, whereas a section of the fppf sheaf $\operatorname{Pic}_{X/S}$ does. Thus we need such a correction to get a stack. The corrected theorem follows directly from the arguments in appendix A of \cite{jiang2025boundednesskltgoodminimal}. Indeed such arguments are quite standard and this issue in \cite{jiang2025boundednesskltgoodminimal} is better to be interpreted as a typo rather than a mathematical gap.
	\end{rem}
	
	\begin{lem}[Globalizing a relative Picard class]\label{globalpicardclass}
		Let $p:X\to S$ be a projective flat morphism with geometrically connected reduced fibers, where $S$ is of finite type, and let $[L]\in\operatorname{Pic}_{X/S}(S)$ be fiberwise ample. Then there exists a positive integer $q$ such that $[L]^{\otimes q}$ is represented by a line bundle on $X$.
	\end{lem}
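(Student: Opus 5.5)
The plan is to use the standard obstruction theory for the relative Picard functor. The Leray spectral sequence for $\mathbb G_m$ along $p$ gives an exact sequence
\[
0\to\operatorname{Pic}(S)\to\operatorname{Pic}(X)\to\operatorname{Pic}_{X/S}(S)\xrightarrow{\ \delta\ }\operatorname{Br}'(S)\to\operatorname{Br}'(X),
\]
where the fppf sheafification of $T\mapsto\operatorname{Pic}(X_T)/p_T^{\ast}\operatorname{Pic}(T)$ agrees with the étale one because $\mathbb G_m$ is smooth. Here we use $p_{\ast}\mathcal O_X=\mathcal O_S$ universally, which holds because $p$ is flat and projective with geometrically connected reduced fibers. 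Thus $[L]$ lifts to a line bundle on $X$ exactly when the class $\delta([L])\in H^2_{\mathrm{et}}(S,\mathbb G_m)$ vanishes. The task is therefore to show that $\delta([L])$ is killed by some positive integer $q$; then $\delta([L]^{\otimes q})=q\,\delta([L])=0$.

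I would avoid proving that $H^2(S,\mathbb G_m)$ is torsion, which can fail for non-normal $S$. Instead I would use fiberwise ampleness to construct an explicit Brauer--Severi splitting.

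First, choose an étale cover $\{S_i\to S\}$ such that $[L]|_{S_i}$ is represented by a line bundle $L_i$ on $X_{S_i}$. Since $S$ is of finite type, hence quasi-compact, finitely many $S_i$ suffice. By fiberwise ampleness together with Serre vanishing and cohomology and base change, there is a single $m$ such that on each $X_{S_i}$ the sheaf $L_i^{\otimes m}$ is relatively very ample, $R^jp_{\ast}L_i^{\otimes m}=0$ for $j>0$, and $E_i:=p_{\ast}L_i^{\otimes m}$ is locally free of some rank $n$ and commutes with base change. Uniformity in $m$ comes from quasi-compactness. The rank $n$ is given by the Hilbert polynomial, which is locally constant on $S$, so we may work one connected component at a time.

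On overlaps $S_{ij}$ we have $L_i\cong L_j\otimes p^{\ast}M_{ij}$ for line bundles $M_{ij}$ on $S_{ij}$, after further refining the cover. Hence $E_i\cong E_j\otimes M_{ij}^{\otimes m}$, so the projective bundles $\mathbb P(E_i)$ glue to a Brauer--Severi scheme $P\to S$. Its class in $\operatorname{Br}(S)$ is $\delta([L]^{\otimes m})=m\,\delta([L])$. This Brauer class is represented by an Azumaya algebra of degree $n$, so it is $n$-torsion, via the standard determinant argument: $\det E_i\cong\det E_j\otimes M_{ij}^{\otimes mn}$, which shows that $mn\,\delta([L])$ is the coboundary of a class that glues. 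It follows that $\delta([L]^{\otimes mn})=0$, and we may take $q=mn$.

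More concretely, the line bundles $\det(E_i)^{-1}\otimes L_i^{\otimes mn}$ on $X_{S_i}$ have transition isomorphisms that agree on overlaps. The scalar ambiguities $\mathbb G_m(S_{ij})$ cancel, since both factors transform by $M_{ij}^{\otimes mn}$. These bundles therefore descend to a line bundle on $X$ representing $[L]^{\otimes mn}$.

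The main obstacle is making the cocycle bookkeeping rigorous. The isomorphisms $L_i\cong L_j\otimes p^{\ast}M_{ij}$ are determined only up to units on $S_{ij}$, and we must check that the chosen twisting is canonical enough to satisfy the cocycle condition on triple overlaps. Here we use that $\operatorname{Aut}$ of a line bundle on $X_T$ equals $\mathcal O(T)^{\ast}$, because $p_{\ast}\mathcal O_X=\mathcal O_S$. My first approach would be to phrase the construction as descent of the canonical $\mathbb G_m$-equivariant line bundle $\det(p_{\ast}\mathcal L^{m})^{-1}\otimes\mathcal L^{mn}$. Its formation is independent of the representative $\mathcal L$, since replacing $\mathcal L$ by $\mathcal L\otimes p^{\ast}M$ multiplies both factors by $M^{mn}$. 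It is therefore an honest fppf sheaf-level construction, and it descends by fpqc descent for quasi-coherent sheaves.
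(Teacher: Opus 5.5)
Your proposal is correct and is essentially the paper's proof. The paper likewise takes a finite flat cover with local representatives $L_i$ and a uniform exponent making $E_i=(p_i)_{\ast}L_i^{\otimes a}$ locally free of constant rank $r$. It then descends $L_i^{\otimes ar}\otimes p_i^{\ast}(\det E_i)^{-1}$, using that the base twists $M_{ij}$ and the scalar ambiguities from $p_{\ast}\mathcal O_X=\mathcal O_S$ cancel between the two factors. The Brauer-group framing and your use of an \'etale rather than fppf cover are harmless additions that the concrete determinant argument does not need.
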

	\begin{proof}
		Since $S$ has finitely many connected components, it suffices to prove the assertion for connected $S$ and then take a common multiple of the resulting exponents. Choose a finite fppf cover $S_i\to S$ with each $S_i$ of finite type and line bundles $L_i$ on $X_i:=X\times_S S_i$ representing $[L]|_{S_i}$. Write $p_i:X_i\to S_i$ for the projection. The $L_i$ are relatively ample. Relative Serre vanishing and cohomology and base change give a common integer $a>0$ such that
		\[
		E_i:=(p_i)_{\ast}L_i^{\otimes a}
		\]
		is locally free of positive rank and commutes with base change. The ranks agree on overlaps and hence descend to a locally constant function on $S$. Since $S$ is connected, they have a common value $r>0$.
		
		On $S_{ij}:=S_i\times_S S_j$, the representatives differ by a line bundle $M_{ij}$ from the base:
		\[
		L_j\simeq L_i\otimes p_{ij}^{\ast}M_{ij}.
		\]
		The projection formula gives
		\[
		E_j\simeq E_i\otimes M_{ij}^{\otimes a},
		\qquad
		\det E_j\simeq\det E_i\otimes M_{ij}^{\otimes ar}.
		\]
		Thus the base twists cancel in
		\[
		\widetilde L_i:=L_i^{\otimes ar}\otimes p_i^{\ast}(\det E_i)^{-1}.
		\]
		Since $p_{\ast}\O_X=\O_S$, any ambiguity in the overlap isomorphisms is a scalar from the base, which cancels between the two factors. The induced identifications of the $\widetilde L_i$ therefore satisfy the cocycle condition. Faithfully flat descent gives a line bundle $\widetilde L$ on $X$ representing $[L]^{\otimes ar}$.
	\end{proof}
	\subsection{Stack-theoretic preliminaries}
	\subsubsection{Compactification of tame Deligne--Mumford stacks}
	\begin{defn}
		Let \( f: X \to S \) be a morphism of Deligne--Mumford stacks and let \( \psi: I_{X/S} \to X \) be the inertia group scheme. We say that \( f \) is \textbf{tame} if for every point \( x: \operatorname{Spec}(k) \to X \), the group scheme \( \psi^{-1}(x) \to \operatorname{Spec}(k) \) has rank prime to the characteristic of \( k \). We say that \( f \) is \textbf{strictly tame} if for every point \( x: \operatorname{Spec}(k) \to X \) and specialization \( y: \operatorname{Spec}(k') \to S \) of \( f \circ x \), the group scheme \( \psi^{-1}(x) \to \operatorname{Spec}(k) \) has rank prime to the characteristic of \( k' \).
	\end{defn}
	
	Note that in characteristic zero, every morphism of Deligne--Mumford stacks is strictly tame. If \( S \) is equicharacteristic, then \( f: X \to S \) is tame if and only if \( f \) is strictly tame.
	\begin{thm}[Theorem F in~\cite{2011COMPACTIFICATIONOT}]\label{tame}
		Let $f : X \to S$ be a separated morphism of finite type between quasi-compact Deligne--Mumford stacks. The following conditions are equivalent:
		\begin{enumerate}
			\item $f$ is strictly tame.
			\item There exists a proper tame morphism $\bar{f} : \bar{X} \to S$ of Deligne--Mumford stacks and an open immersion $X \hookrightarrow \bar{X}$ over $S$.
		\end{enumerate}
		Moreover, $\bar{X}$ may be chosen so that, for every $x \in |\bar{X}|$, one has $\operatorname{stab}_{\bar{X}/S}(x) = A \times \prod_{i=1}^n \operatorname{stab}_{X/S}(x_i)$, where $A$ is an abelian group and $x_1, x_2, \ldots, x_n \in |X|$. Here $\operatorname{stab}(x)$ and $\operatorname{stab}(x_i)$ denote the geometric stabilizer groups.
	\end{thm}
	The reference~\cite{2011COMPACTIFICATIONOT} is an unpublished draft. For the particular moduli stacks of traditional stable minimal models used below, Proposition~\ref{tscompact} gives an independent proof of compactifiability from the quotient construction in~\cite{jiang2025boundednesskltgoodminimal}.
	\subsubsection{Bounding maps to Deligne--Mumford stacks via coarse moduli spaces}
	\begin{defn}
		A variety $U$ is \emph{smooth at infinity} if its singular locus is proper. A finitely presented morphism $U\to T$ between schemes is \emph{smooth at infinity} if the relative singular locus of $U/T$ is proper over $T$.
	\end{defn}
	\begin{defn}
		A \emph{compactifiable Deligne--Mumford stack} is a separated Deligne--Mumford stack of finite type over $\C$ that admits an open immersion into a proper Deligne--Mumford stack.
	\end{defn}
	\begin{defn}
		Let $\mathfrak{M}$ be a moduli functor, and let $V\to T$ be a bounded family of varieties parameterized by $T$. Define $\mathfrak{C}_{V/T,\mtfk{M}}$ to be the union, over all closed points $t\in T$, of the sets of morphisms $V_t\to\mtfk{M}$.
	\end{defn}
	\begin{defn}[boundedness]\label{bdddef}
		Let $\mtfk M$ be a moduli functor. Let $V\to T$ be as above. We say that a subset $\mtfk{C}\subset \mtfk{C}_{V/T,\mtfk M}$ is \emph{bounded} if there is a scheme $T^{\prime}\to T$ of finite type and a morphism $\Phi:V\times_T T^{\prime}=:V_{T^{\prime}}\to \mtfk{M}$, such that for every morphism $\mu:V_t\to \mtfk M$ arising from an element of $\mtfk C$, there is a closed point $t^{\prime}\in T^{\prime}$ such that $\mu$ coincides with $\Phi|_{V_{t^{\prime}}}$.
	\end{defn}
	\begin{defn}[Coarse boundedness]
		Let $\mtfk{M}$ be a moduli functor with coarse moduli space $M$ of finite type. Let $V\to T$ be as above. We say that a subset $\mtfk{C}\subset \mtfk{C}_{V/T,\mtfk M}$ is \emph{coarsely bounded} if there is a scheme $T^{\prime}\to T$ of finite type and a morphism $\Phi:V\times_T T^{\prime}=:V_{T^{\prime}}\to M$, such that for every morphism $\mu:V_t\to  M$ arising from an element of $\mtfk C$, there is a closed point $t^{\prime}\in T^{\prime}$ such that $\mu$ coincides with $\Phi|_{V_{t^{\prime}}}$.
	\end{defn}
	\begin{thm}[Theorem 1.7 in~\cite{kovacs2009boundednessfamiliescanonicallypolarized}, cf. Theorem 3.8 of~\cite{ascher2024boundednessresultsfamiliesnoncanonically}]\label{shaf}
		Let $\mtfk M$ be a compactifiable Deligne--Mumford stack. Let $U\to T$ be a morphism that is smooth at infinity, where $T$ is of finite type. Let $\mtfk{C}\subset \mtfk{C}_{U/T,\mtfk M}$ be a coarsely bounded subset. Then $\mtfk{C}$ is bounded in the sense of Definition~\ref{bdddef}.
	\end{thm}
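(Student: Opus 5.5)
The statement to be proved is Theorem~\ref{shaf}: coarse boundedness implies boundedness for maps from a family smooth at infinity into a compactifiable Deligne--Mumford stack. The plan follows the argument of Kov\'acs--Lieblich: compactify the stack, bound the finitely many lifts over each coarse map, and spread them out over a finite-type parameter space.

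\textbf{Step 1 (compactification).} Fix an open immersion $\mtfk M\hookrightarrow\bar{\mtfk M}$ into a proper Deligne--Mumford stack, with coarse space $\bar M\supset M$. Replace $T'$ by a stratification into normal pieces; this is harmless, since boundedness only needs a finite-type $T'$. We are then given $\Phi:U_{T'}\to M$ whose fibers realize every coarse map coming from $\mtfk C$.

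\textbf{Step 2 (reduction to the normalized fiber product).} Form the fiber product $U_{T'}\times_{\bar M}\bar{\mtfk M}$ and consider the relevant components of its normalization. Since $\bar{\mtfk M}\to\bar M$ is proper and quasi-finite, this is a proper quasi-finite stack over $U_{T'}$. A lift $\mu:U_t\to\mtfk M$ of $\Phi|_{U_{t'}}$ corresponds to a section of this stack over $U_t$. Because the coarse map is a universal homeomorphism and $U_t$ is normal (smooth away from a proper set), such a section amounts to a choice of component of a normal stack that maps isomorphically onto $U_t$.

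\textbf{Step 3 (finiteness of choices).} Pass to a finite cover: choose a finite flat presentation, or use that $\bar{\mtfk M}$ has a finite cover by a scheme, which is available for tame stacks in characteristic zero via Theorem~\ref{tame}. The set of such sections is then governed by the \'etale fundamental group data of $U_t$ with prescribed local monodromy, namely homomorphisms from $\pi_1(U_t)$ to finite stabilizer groups of bounded order. Smoothness at infinity is exactly what makes these finite and locally constant in $t'$: it gives a simultaneous good compactification after stratifying $T'$, so that $U_{T'}\to T'$ is topologically locally trivial near infinity, and the fundamental groups, along with their finite-index subgroups of bounded index, form a finite local system over $T'$.

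\textbf{Step 4 (spreading out).} Replace $T'$ by the finite \'etale cover $T''$ parametrizing these choices. The universal choice then gives a morphism $U_{T''}\to\bar{\mtfk M}$. Its preimage of $\mtfk M$ is open, and the condition that a fiber lands in $\mtfk M$ is constructible, so after a final stratification we obtain $U_{T''}\to\mtfk M$ realizing every element of $\mtfk C$.

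\textbf{Main obstacle.} Step 3 is the crux: showing that lifts of a fixed coarse map are finite in number uniformly in $t'$, and that they glue in families. I would follow Kov\'acs--Lieblich, using the purity of branch locus together with the smoothness at infinity to control ramification.
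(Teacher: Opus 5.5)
\textbf{Genuine gap: the spreading-out step (Step~4).} Your route differs from the one the paper relies on, and the step that carries the content of the theorem is missing. The paper does no fiberwise count. It observes that in Kov\'acs--Lieblich's proof, weak boundedness only serves to produce a finite-type family of coarse maps. The rest of their construction yields a finite-type algebraic stack over $T$ parametrizing the lifts; that construction compactifies the source (this is where smoothness at infinity is used) and maps into the compactification of $\mathfrak M$. A finite-type atlas of this stack automatically carries a universal morphism $U\times_T T''\to\mathfrak M$.

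Your Step~4 has no such representing object, and the sentence ``the universal choice then gives a morphism $U_{T''}\to\mathfrak M$'' is false as stated. A finite local system of isomorphism classes of fiberwise lifts need not be realized by lifts over any finite \'etale cover of $T'$. Take $U=\C^2\setminus\{0\}\to T=\mathbb{P}^1$, a smooth $\C^*$-bundle with a good compactification over all of $\mathbb{P}^1$, and take $\mathfrak M=B\mu_n$ with $M$ a point. The lifts on each fiber are the $n$ classes in $H^1(\C^*,\mu_n)$, a constant system, so your $T''$ is $n$ copies of $\mathbb{P}^1$. But $H^1(U,\mu_n)=0$, so no nontrivial fiberwise class is the restriction of a $\mu_n$-torsor on $U$. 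One has to localize the base further and prove that each lift on $U_t$ extends over $U\times_{T'}T''$ for some \'etale neighbourhood $T''\to T'$ of $t$. That is, you need a base-change statement for the nonabelian $R^1f_{\ast}$ of the relevant finite group after stratification. Nothing in your proposal supplies this.

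\textbf{Step~2 is also incorrect.} Smoothness at infinity only says that $\operatorname{Sing}(U_t)$ is proper, so $U_t$ need not be normal; a nodal affine plane cubic qualifies. Your passage to the normalization uses normality. Even for normal $U_t$, a lift is not a component of the normalized fiber product mapping isomorphically onto $U_t$. For $\mathfrak M=BG$ the fiber product is $U_t\times BG$, already normal and irreducible, and the lifts are the $G$-torsors on $U_t$. The correct statement, for normal $U_t$ when some lift exists, is this: the normalization of $(U_t\times_M\mathfrak M)_{\mathrm{red}}$ is a neutral gerbe $B_{U_t}\mathcal G$ with $\mathcal G$ finite \'etale over $U_t$. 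After fixing one lift, lifts are in bijection with $H^1(U_t,\mathcal G)$; this is what your Step~3 tacitly uses.

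\textbf{The hypotheses do no work in Steps~2--4.} Since $\Phi$ already lands in $M$, the lifts live in $U_{T'}\times_M\mathfrak M$ and $\overline{\mathfrak M}$ is never needed. Topological local triviality after stratification holds for every finite-type family (Verdier), not because of smoothness at infinity. Your closing appeal to purity of the branch locus belongs to the other route, extending maps across the boundary of a compactified source, which Steps~2--4 do not follow. Also, Theorem~\ref{tame} is a compactification statement, not a finite cover of $\overline{\mathfrak M}$ by a scheme.
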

	\begin{rem}
		The preceding statement is not a literal quotation of either~\cite[Theorem~1.7]{kovacs2009boundednessfamiliescanonicallypolarized} or~\cite[Theorem~3.8]{ascher2024boundednessresultsfamiliesnoncanonically}. The former assumes that the target stack is weakly bounded and concludes a uniform finite bound on the number of deformation types, while the latter treats morphisms from a fixed variety. Moreover, the notion of boundedness in Definition~\ref{bdddef} is slightly stronger than the usual one: the parameterizing scheme must be of finite type over $T$, and the universal source must be the base change $U\times_TT'$. All these modifications follow directly from the proof of~\cite[Theorem~1.7]{kovacs2009boundednessfamiliescanonicallypolarized}. Weak boundedness is used there only to construct a finite-type family of coarse moduli maps; once this is supplied by the coarse boundedness hypothesis, the remaining construction produces a finite-type parameter stack over $T$ whose universal morphism is defined on the corresponding base change of $U$. Taking a finite-type atlas of this stack gives precisely the boundedness asserted above.
	\end{rem}
	The following result is crucial for bounding the ``bad locus".
	\begin{thm}[Corollary 3.5 of~\cite{kovacs2009boundednessfamiliescanonicallypolarized}]\label{hartogslift}
		Let $U$ be a smooth variety, let $V\subset U$ be a big open subset, and let $\mathfrak M$ be a separated Deligne--Mumford stack with coarse moduli space $M$. If a morphism $\psi:U\to M$ admits a lift $\mu_V:V\to\mathfrak M$ over $V$, then $\mu_V$ extends uniquely, up to unique isomorphism, to a morphism
		\[
		\mu_U:U\longrightarrow\mathfrak M
		\]
		lifting $\psi$.
	\end{thm}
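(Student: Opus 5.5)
The statement is local on the coarse space $M$, so I would prove uniqueness first and then construct the extension étale-locally over $M$, using uniqueness to glue the local pieces.

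\emph{Uniqueness.} Suppose $\mu_U,\mu'_U:U\to\mathfrak M$ both extend $\mu_V$. Since $\mathfrak M$ is a separated Deligne--Mumford stack, the sheaf $\operatorname{Isom}_U(\mu_U,\mu'_U)\to U$ is finite and unramified. We are given a section of it over the dense open $V$. A finite unramified morphism over the normal (indeed smooth) variety $U$ has at most one section extending a section over a dense open. Such an extension exists because the closure of the section over $V$ is finite and birational over the normal variety $U$, hence an isomorphism onto $U$. So the isomorphism $\mu_U|_V\simeq\mu'_U|_V$ extends uniquely. This rigidity also makes the local constructions below canonical, so they descend and glue.

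\emph{Local structure.} By the local structure theorem for separated Deligne--Mumford stacks with finite inertia (Keel--Mori together with the standard slice argument), $M$ has an étale cover by schemes $M_i\to M$ such that
\[
\mathfrak M\times_M M_i\simeq[W_i/G_i],
\]
where $W_i$ is affine, $G_i$ is a finite group, and $W_i/G_i=M_i$. Put $U_i:=U\times_M M_i$ and $V_i:=V\times_M M_i$. Then $U_i\to U$ is étale, so $U_i$ is smooth, and $V_i\subset U_i$ is still big. The restriction $\mu_V|_{V_i}$ lands in $[W_i/G_i]$, so it amounts to a $G_i$-torsor $P_{V_i}\to V_i$ together with a $G_i$-equivariant map $P_{V_i}\to W_i$ lying over $V_i\to M_i$.

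\emph{Extension over $U_i$.} This is the step I expect to be the main obstacle. First I would extend the torsor. Take $P_i$ to be the normalization of $U_i$ in $P_{V_i}$; it carries a $G_i$-action and $P_i\to U_i$ is finite. Since $U_i$ is smooth and $P_i\to U_i$ is étale outside a closed set of codimension at least $2$, Zariski--Nagata purity of the branch locus makes $P_i\to U_i$ étale. Hence it is a $G_i$-torsor extending $P_{V_i}$. Equivalently, $\pi_1(V_i)\simeq\pi_1(U_i)$ for a big open subset of a smooth variety. Next I would extend the map to $W_i$. The variety $P_i$ is normal, $P_{V_i}\subset P_i$ is big, and $W_i$ is affine, so by Hartogs the map $P_{V_i}\to W_i$ extends uniquely to $P_i\to W_i$. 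It is $G_i$-equivariant by density of $P_{V_i}$ and separatedness of $W_i$. The induced map $U_i\to M_i$ agrees with $\psi$ on $V_i$, hence everywhere, by density and separatedness of $M_i$. This gives $\mu_{U_i}:U_i\to\mathfrak M$ lifting $\psi|_{U_i}$ and extending $\mu_V|_{V_i}$.

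\emph{Gluing.} On the overlaps $U_i\times_U U_j$, which are again smooth with big open preimages of $V$, the two extensions agree with $\mu_V$ over the preimage of $V$. By the uniqueness step they are then canonically isomorphic, and those isomorphisms satisfy the cocycle condition because they do over $V$ and are unique. Since $\mathfrak M$ is a stack, étale descent yields $\mu_U:U\to\mathfrak M$ lifting $\psi$ and extending $\mu_V$, unique up to unique isomorphism.
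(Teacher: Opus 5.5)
Your argument is correct: uniqueness and the gluing isomorphisms come from finiteness of $\operatorname{Isom}$ over a normal base, and existence comes from three ingredients. These are the \'etale-local presentation $\mathfrak M\times_M M_i\simeq[W_i/G_i]$ over the coarse space, Zariski--Nagata purity to extend the $G_i$-torsor across codimension two, and Hartogs to extend the equivariant map to the affine $W_i$. The paper gives no proof of its own and simply cites Corollary~3.5 of Kov\'acs--Lieblich, which to my knowledge is proved along these same lines (a higher-codimension form of the Abramovich--Vistoli purity lemma), so your proof essentially reconstructs the cited argument.
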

	
	\section{Proof of the Main Theorem}
	\subsection{Boundedness of rational maps}
	
	\begin{lem}[projective targets]\label{lem1}
		Let $M$ be a projective variety. Let $\mtcl P$ be a log bounded collection of couples, and fix a positive number $v$. Let $\mtcl F$ be a bounded collection of morphisms from projective curves to $M$, and let $\mtfk C$ be a collection of data
		\[
		\bigl(j:Z\dashrightarrow M,(Z,D),H\bigr),
		\]
		where $(Z,D)\in\mtcl P$ and $H$ is any very ample divisor on $Z$ satisfying $\operatorname{vol}_Z(H)\leq v$ and $\operatorname{deg}_HD\leq v$.
		Assume that, for a general  curve $C\subset Z$ cut out by $H^{\dim Z-1}$, the restriction $j|_C$ extends to a morphism
		\[
		j_C:C\longrightarrow M
		\]
		belonging to $\mtcl F$. Then there exist a flat family of polarized couples
		\[
		(\mtcl Z,\mtcl D;\mtcl H)\longrightarrow T
		\]
		and a rational map $\phi:\mtcl Z\dashrightarrow M$ with the following properties:
		\begin{itemize}
			\item every connected component of $T$ and of $\mtcl Z$ is a normal variety of finite type;
			\item $\mtcl H$ is a relatively very ample Cartier divisor on $\mtcl Z$;
			\item $\phi$ is defined on an open subset $U\subset\mtcl Z$ whose intersection with every fiber of $\mtcl Z\to T$ is dense; in particular, $\phi$ restricts to a rational map on every fiber;
			\item $\mtcl D$ contains no fiber of $\mtcl Z\to T$;
			\item for every datum $\bigl(j:Z\dashrightarrow M,(Z,D),H\bigr)$ in $\mtfk C$, there exist a closed point $t\in T$ and an isomorphism of couples
			\[
			(Z,D)\simeq(\mtcl Z_t,\mtcl D_t)
			\]
			under which $H\simeq\mtcl H_t$ and $j$ coincides with $\phi_t$.
		\end{itemize}
	\end{lem}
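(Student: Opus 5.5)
Since $(Z,D)$ ranges over a log bounded collection and $\operatorname{vol}_Z(H)\le v$, I will first make the polarized couples bounded. By Definition~\ref{logbounded} and the usual Chow/Hilbert scheme argument, there are finitely many flat families $(\mtcl Z_0,\mtcl D_0;\mtcl H_0)\to T_0$ such that every $(Z,D,H)$ is a fiber. The divisors $H$ are very ample with bounded volume on a bounded family, so the triples form a bounded subset of a Hilbert scheme; the condition $\deg_H D\le v$ keeps $D$ in finitely many Hilbert components. After stratifying $T_0$ into normal strata and using Lemma~\ref{hensel}, I may assume: every component of $T_0$ and $\mtcl Z_0$ is normal, $\mtcl H_0$ is relatively very ample, and $\mtcl D_0$ contains no fiber. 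I also fix the embedding $\mtcl Z_0\subset\mathbb P^N_{T_0}$ given by $\mtcl H_0$, with $N$ bounded.

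Next I parameterize the rational maps. Embed $M\subset\mathbb P^m$. A rational map $j:Z\dashrightarrow M$ is given by sections $s_0,\dots,s_m$ of some $\O_Z(kH)$ (modulo common factors). On a general complete intersection curve $C$, $\deg j_C^{\ast}\O(1)$ is bounded because $\mtcl F$ is bounded. This degree equals $(j^{\ast}\O(1)\cdot H^{\dim Z-1})$, computed on the movable part. Hence $L:=j^{\ast}\O_M(1)$, a Weil divisor class on $Z$ up to the base locus, has bounded $H$-degree. I then want an integer $k$, depending only on the bounded data, such that $kH-L$ is effective (or pseudo-effective with a section) uniformly. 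Then $j$ is given by $m+1$ sections of $\O_Z(kH)$, with a fixed divisor $E\in|kH-L|$ removed. I would get such a $k$ from boundedness of $(Z,H)$: Weil divisors of bounded degree on a bounded family have bounded Hilbert data, and a Serre-vanishing/Matsusaka-type argument over the finitely many families gives a uniform $k$ with $h^0(kH-L)>0$. This is the step I expect to be the main obstacle, since $L$ is only a Weil class on a possibly singular $Z$. I will first try to pass to a resolution in the bounded family, where degrees of $L$ along curves in $|H|$ control $L$ numerically, and then use boundedness of the Picard scheme components.

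Once $k$ is fixed, I form the projective bundle $P$ over $T_0$ (after stratifying so that $h^0(kH_t)$ is constant and base change holds) that parameterizes tuples $(s_0,\dots,s_m)$ in $H^0(\mtcl Z_t,k\mtcl H_t)$. I take the locally closed subset $T\subset P$ where the induced rational map has dense domain on $\mtcl Z_t$ and lands in $M$; this is a closed condition on the generic point of each fiber, together with the open condition that not all $s_i$ vanish identically. On $\mtcl Z\times_{T_0}T$ the universal sections define $\phi$ on the open set $U$ where they do not all vanish, and $U$ is dense in each fiber by construction. Stratifying $T$ into normal pieces and taking normalizations of the components of $\mtcl Z_T$ (again stratifying so that the fibers stay normal) gives the required family. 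Every datum in $\mtfk C$ appears as a closed fiber, because $j$ is realized by some tuple of sections of $kH$.
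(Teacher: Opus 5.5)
\paragraph{A gap at the uniform $k$.} Your overall architecture is sound and differs from the paper's. The paper bounds the degrees of the graphs of $j$ in a product of projective spaces, then stratifies a Hilbert scheme of graphs until the projection to $\mathcal Z$ is birational on every fiber. You instead write $j$ by an $(m+1)$-tuple of sections of $\mathcal O_Z(kH)$ and parameterize such tuples directly. Once a uniform $k$ is available, your construction does give $T$ and $\phi$: the condition ``lands in $M$'' is closed on the projective bundle of tuples, and the non-vanishing locus is dense on the integral fibers.

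The gap is the uniform $k$ itself, and the route you sketch for it does not work. The $H$-degree of a Weil divisor class gives no numerical control. On $Z=\mathbb P^1\times\mathbb P^1$ with $H=\mathcal O(1,1)$, the classes $\mathcal O(n,c-n)$ all have $H$-degree $c$, yet no single $k$ makes $kH-\mathcal O(n,c-n)$ effective for every $n$. So you cannot get $k$ from the degree bound alone. Statements like ``Weil divisors of bounded degree have bounded Hilbert data'', passing to a resolution, or invoking components of the Picard scheme do not help.

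\paragraph{The missing idea: effectivity.} Let $Z^\circ$ be the big open set where $j$ is defined, and let $Y\subset\mathbb P^m$ be a general hyperplane. The closure $G_Z$ of $(j|_{Z^\circ})^{-1}(M\cap Y)$ is a reduced effective Weil divisor in the class $L$. Its degree $H^{\dim Z-1}\cdot G_Z=\deg j_C^{\ast}\mathcal O_M(1)$ is bounded.

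Effective divisors of bounded $H$-degree on the bounded family of polarized $(Z,H)$ are parameterized by a finite-type Chow scheme. Since $G_Z$ is reduced, it then suffices to find a uniform $a$ and a member $E'\in|aH|$ containing $\operatorname{Supp}G_Z$. This gives $E:=E'-G_Z\in|aH-L|$, so $k=a$ works.

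This is exactly the argument in the paper's Claim. There, $G_Z=\widetilde p_{\ast}G$ for a general $G\in|L|$ on the normalized graph. The paper uses the resulting pseudo-effectivity of $aP-L$ to get $(P+L)^d\leq(a+1)^dv$, i.e.\ bounded graph degrees.

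With this step supplied, your linear-system parameterization is a legitimate alternative to the paper's graph-bounding argument. It is somewhat more direct: it avoids the intersection-number iteration and the stratification needed to make the graph projection birational fiberwise.
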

	\begin{proof}
		\medskip\noindent\emph{Step 1: bounding the graph.}
		By Lemma~\ref{finitecoeffrefinement}, we may choose finitely many parameterizing families for $\mtcl P$ in which the boundary and its coefficients are retained fiberwise. Choose an embedding $i:M\hookrightarrow \mtbb P^n$ and set $A:=i^{\ast}\O_{\mtbb P^n}(1)$. Since $H$ is very ample with bounded volume, all the polarized couples $(Z,D),H$ occurring in $\mtfk C$ form a bounded family. In particular, there is a positive number $\mu$, depending only on $\mtcl P$ and $v$, such that
		\[
		H^{\dim Z-1}\cdot D\leq \mu.
		\]
		Fix a positive integer $m$ such that for any $(Z,D),H$ occurring in $\mtfk C$, there exists an  embedding $\iota_Z:Z\hookrightarrow\mtbb P^m$ satisfying $H=\iota_Z^{\ast}\O_{\mtbb P^m}(1)$. Let $f:Z\dashrightarrow M$ be the underlying rational map of an element of $\mtfk C$. Let $\Gamma_f\subset\mtbb P^m\times\mtbb P^n$ denote the closure of the graph of $i\circ f$, and let $D_{\Gamma}$ denote the reduced closure of the graph of the restriction $\operatorname{Supp}(D)\dashrightarrow M$. Let $p$ and $q$ denote the projections from $\Gamma_f$ to $Z$ and $M$, respectively. We first prove the following claim.
		\begin{cla}
			There exists a positive number $\lambda$, depending only on $M,\mtcl P,\mtcl F,v$, such that
			\[
			\Gamma_f\cdot (p^{\ast}H+q^{\ast}A)^{\dim Z}\leq \lambda
			\]
			and
			\[
			D_{\Gamma}\cdot (p^{\ast}H+q^{\ast}A)^{\dim Z-1}\leq \lambda
			\]
			for the underlying map of every datum in $\mtfk C$.
		\end{cla}
		\begin{proof}[Proof of claim]
			After replacing $\mtcl P$ by finitely many equidimensional subfamilies, we may assume that every $(Z,D)\in\mtcl P$ is $d$-dimensional. Let $\widetilde\Gamma$ be the normalization of $\Gamma_f$, let $\widetilde p:\widetilde\Gamma\to Z$ and $\widetilde q:\widetilde\Gamma\to M$ be the projections, and set
			\[
			P:=\widetilde p^{\ast}H,
			\qquad
			L:=\widetilde q^{\ast}A.
			\]
			Both $P$ and $L$ are nef, and $P^d=H^d\leq v$. If $C\subset Z$ is a general  curve cut out by $H^{d-1}$, then
			\[
			P^{d-1}\cdot L=\deg_C j_C^{\ast}A.
			\]
			The right-hand side is uniformly bounded because the morphisms $j_C$ belong to the bounded family $\mtcl F$.
			
			Choose a general divisor $G\in|L|$ and set $G_Z:=\widetilde p_{\ast}G$. The divisor $G$ has no $\widetilde p$-exceptional component, and
			\[
			H^{d-1}\cdot G_Z=P^{d-1}\cdot L
			\]
			Thus the effective divisors $G_Z$ form a bounded family. Thus there is a positive integer $a$, depending only on $M,\mtcl P,\mtcl F,v$, and an effective divisor $E_f\in|aH|$ containing $G_Z$. The pullback $\widetilde p^{\ast}E_f$ contains $G$, so $aP-L$ is pseudo-effective. Therefore, for every $0\leq k\leq d-1$,
			\[
			P^k\cdot(P+L)^{d-k}
			\leq(a+1)P^{k+1}\cdot(P+L)^{d-k-1}.
			\]
			Iterating these inequalities gives
			\[
			(P+L)^d\leq(a+1)^d P^d\leq(a+1)^d v.
			\]
			Thus the degrees of the graphs $\Gamma_f$ are uniformly bounded.
			
			Finally, boundedness of the polarized couples $(Z,D),H$ gives a positive integer $b$, depending only on $M,\mtcl P,\mtcl F,v$, and an effective divisor $E_D\in|bH|$ containing $\operatorname{Supp}(D)$. If $D_{\widetilde\Gamma}$ denotes the  strict transform of $D$ on $\widetilde\Gamma$, then $\widetilde p^{\ast}E_D$ contains $D_{\widetilde\Gamma}$, so $bP-D_{\widetilde\Gamma}$ is pseudo-effective. Hence
			\[
			D_{\widetilde\Gamma}\cdot(P+L)^{d-1}
			\leq bP\cdot(P+L)^{d-1},
			\]
			and the right-hand side is at most $b(a+1)^{d-1}P^d\leq b(a+1)^{d-1}v$. The required bound for $D_\Gamma$ follows.
		\end{proof}
		\medskip\noindent\emph{Step 2: constructing the parameterizing family.}
		The claim shows that the data $D_\Gamma\subset\Gamma_f\subset Z\times M\subset\mtbb P^m\times\mtbb P^n$ form a bounded family. Thus we may choose a reduced finite-type scheme $T$, a flat family of polarized couples
		\[
		(\mtcl Z,\mtcl D;\mtcl H)\longrightarrow T,
		\]
		and a projective flat $T$-scheme $\mtcl G$ equipped with a closed immersion
		\[
		\mtcl G\hookrightarrow\mtcl Z\times M
		\]
		such that $\mtcl H$ is a relatively very ample Cartier divisor and the following holds: for every datum $\bigl(j:Z\dashrightarrow M,(Z,D),H\bigr)$ in $\mtfk C$, there is a closed point $t\in T$ and an isomorphism of polarized couples
		\[
		(Z,D;H)\simeq(\mtcl Z_t,\mtcl D_t;\mtcl H_t)
		\]
		under which $\mtcl G_t\subset\mtcl Z_t\times M$ can be identified with the closure $\Gamma_f\subset Z\times M$ of the corresponding graph. Let $\rho:\mtcl G\to\mtcl Z$ and $\sigma:\mtcl G\to M$ denote the induced projections.
		
		Using generic flatness and  Theorem 12.1.6 in~\cite{PMIHES_1967__32__5_0}, by possibly passing to a finite stratification on $T$ and discarding irrelevant strata,  we may assume that $T$ is normal, $\mtcl Z$ and $\mtcl D$ are flat over $T$, the fibers of $\mtcl Z$ are geometrically normal, $\mtcl D$ contains no fiber, and $\rho$ is an isomorphism over an open subset $U\subset\mtcl Z$ meeting every fiber densely. 
		Since $\mtcl Z\to T$ is flat with geometrically normal fibers, Proposition 6.8.3 in~\cite{PMIHES_1967__32__5_0} implies that $\mtcl Z$ is normal. As $\rho$ is an isomorphism over $U$, the morphism
		\[
		\phi:=\sigma\circ\rho^{-1}:U\longrightarrow M
		\]
		defines a rational map on $\mtcl Z$. 
	\end{proof}
	\begin{lem}[Algebraic-space targets]\label{algspacetarget}
		Let $M$ be a reduced separated algebraic space of finite type, let $\mtcl P$ be a log bounded collection of couples, and fix a positive number $v$. Let $\mtcl F$ be a bounded collection, in the sense of Definition~\ref{bdddef}, of morphisms from quasi-projective curves to $M$. Suppose that $\mtfk C$ is a collection of data
		\[
		\bigl(j:(Z,D)\dashrightarrow M,H\bigr),
		\]
		where $(Z,D)\in\mtcl P$ and $H$ is any very ample divisor on $Z$ satisfying $\operatorname{vol}_Z(H)\leq v$ and $\deg_HD\leq v$. Assume that, for a general  curve $C\subset Z$ cut out by $H^{\dim Z-1}$, there is a nonempty open subset $C^{\circ}\subset C$ such that the restriction
		\[
		j_C:C^{\circ}\longrightarrow M
		\]
		is a morphism belonging to $\mtcl F$. Then there exist a flat family of polarized couples
		\[
		(\mtcl Z,\mtcl D;\mtcl H)\longrightarrow T
		\]
		and a rational map $\phi:\mtcl Z\dashrightarrow M$ with the following properties:
		\begin{itemize}
			\item every connected component of $T$ and of $\mtcl Z$ is a normal variety of finite type;
			\item $\mtcl H$ is a relatively very ample Cartier divisor on $\mtcl Z$;
			\item $\phi$ is defined on an open subset $V\subset\mtcl Z$ whose intersection with every fiber of $\mtcl Z\to T$ is dense;
			\item $\mtcl D$ contains no fiber of $\mtcl Z\to T$;
			\item for every datum $\bigl(j:(Z,D)\dashrightarrow M,H\bigr)$ in $\mtfk C$, there exist a closed point $t\in T$ and an isomorphism of couples
			\[
			(Z,D)\simeq(\mtcl Z_t,\mtcl D_t)
			\]
			under which $H\simeq\mtcl H_t$ and $j$ coincides with $\phi_t$.
		\end{itemize}
	\end{lem}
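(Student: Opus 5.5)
The plan is to reduce Lemma~\ref{algspacetarget} to Lemma~\ref{lem1} by replacing the algebraic space $M$ with a projective variety. First I reduce to the case where $M$ is irreducible and a scheme. The images of the generic points of the curves $C^{\circ}$ and of the $Z$'s lie in finitely many irreducible components, because $\mtcl F$ is bounded. Using that, I would stratify $M$ by Noetherian induction. By~\cite[Tag 06NH]{stacks-project} (a reduced separated algebraic space of finite type has a dense open subscheme), I write $M=\coprod M_k$ as a finite union of locally closed affine (hence quasi-projective) subschemes $M_k$. For each datum $j:Z\dashrightarrow M$, the generic point of $Z$ maps into a unique stratum $M_k$, and $j$ factors as a rational map $Z\dashrightarrow \overline{M_k}$. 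I compactify each $M_k$ to a projective variety $\overline{M}_k$, taking the closure of $M_k$ in some $\mtbb P^n$. Then $j$ defines a rational map $Z\dashrightarrow\overline M_k$.

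The next step is to check the curve hypothesis of Lemma~\ref{lem1} for $\overline M_k$. For a general curve $C$, the open part $C^{\circ}$ maps to $M$, and its generic point lands in $M_k$. So after shrinking $C^{\circ}$, $j_C$ maps into $M_k$ and extends uniquely to a morphism $\bar j_C:C\to\overline M_k$, since $C$ is a smooth curve and $\overline M_k$ is projective. I must show these $\bar j_C$ form a bounded family of morphisms from projective curves. By Definition~\ref{bdddef}, the $j_C$ come from a finite-type family $\Phi:V_{T'}\to M$. I stratify $T'$ so that the locus of each fiber mapping to $M_k$ is a relative open family. The resulting rational map to $\overline M_k$ becomes a morphism after passing to the relative smooth compactification of the curves and stratifying $T'$ further, which flattens the closures of the graphs. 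This produces a bounded family of morphisms $\bar j_C$. In particular, $\deg_C\bar j_C^{\ast}\O(1)$ is uniformly bounded, and this degree bound is all the proof of Lemma~\ref{lem1} actually uses.

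Now I apply Lemma~\ref{lem1} with target $\overline M_k$ for each $k$. This gives flat families $(\mtcl Z_k,\mtcl D_k;\mtcl H_k)\to T_k$ and rational maps $\phi_k:\mtcl Z_k\dashrightarrow\overline M_k$ that recover every datum. I then restrict to the parameter points $t$ where $\phi_{k,t}$ generically lands in $M_k$. This is a constructible condition, so after stratifying $T_k$ and normalizing I may assume it holds on all of $T_k$. Next I set $V:=\phi_k^{-1}(M_k)\cap U$, which is open and fiberwise dense, and compose with the immersion $M_k\hookrightarrow M$. Taking the disjoint union over $k$ gives the required family. Each datum's $j$ coincides with $\phi_t$ on a dense open set, and hence as a rational map.

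I expect the main obstacle to be the boundedness of the extended curve maps $\bar j_C$. The source curves $C^{\circ}$ in $\mtcl F$ are only quasi-projective and parameterized by the base change $V_{T'}$, so I have to make the fiberwise extension to the smooth compactification uniform. My first approach is a graph closure in $\overline{C}\times\overline M_k$ over $T'$, followed by generic flatness and a stratification. On each stratum the closure of the graph restricts fiberwise to the graph of the extension, because the normalization of a curve graph closure is the curve itself.
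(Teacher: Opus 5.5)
Your proposal is correct and follows essentially the same route as the paper: stratify $M$ into finitely many quasi-projective locally closed subschemes $M_k$, compactify each to a projective $\overline M_k$, stratify the curve family so that generic images land in a fixed $M_k$, and use graph closures and flattening so the maps extend to a bounded family of morphisms from the smooth compactified curves. You then apply Lemma~\ref{lem1} to each $\overline M_k$ and take the disjoint union. Your additional step of restricting to parameter points where $\phi_{k,t}$ generically lands in $M_k$ and taking $V:=\phi_k^{-1}(M_k)\cap U$ makes explicit a detail the paper leaves implicit, namely passing from rational maps to $\overline M_k$ back to fiberwise-dense rational maps to $M$.
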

	\begin{proof}
		\medskip\noindent\emph{Step 1: passing to stratification on the target.}
		A finite-type separated algebraic space has a dense open subspace which is a scheme; applying  this repeatedly gives a finite disjoint stratification
		\[
		M=\coprod_{i=1}^s M_i
		\]
		by quasi-projective locally closed subschemes; see, for example,~\cite[Tag 06LY]{stacks-project}. Fix an open immersion $M_i\hookrightarrow\overline M_i$ into a projective scheme for every $i$.

		Choose a family of quasi-projective curves $\mtcl C^{\circ}\to S$ over finite type base $S$ and a morphism $\psi:\mtcl C^{\circ}\to M$ realizing $\mtcl F$. By compactifying and passing to a finite stratification on $S$, we may assume that $\mtcl C^{\circ}\to S$ is smooth and there is a smooth projective family of curves $\bar{\mtcl C}\to S$ containing $\mtcl C^{\circ}$ as an open subfamily.
		
		Stratify $S$ further so that  over each connected component $S_{\lambda}$, the  generic image of every fiber lies in a fixed $M_i$. Let $\mtcl C^{\circ}_{\lambda}\subset\overline{\mtcl C_{\lambda}}\to S_{\lambda}$ denote the restriction, then $\psi$ induces a rational map
		\[
		\overline{\mtcl C_{\lambda}}\dashrightarrow M_i\hookrightarrow\overline M_i.
		\]
		On every fiber this map extends uniquely to a morphism, because $\overline M_i$ is projective. Using boundedness of graphs of these maps, and possibly changing $S_{\lambda}$, we may assume $\overline{\mtcl C_{\lambda}}\dasharrow\overline{M_i}$ extends to a morphism on $\overline{\mtcl C_{\lambda}}$.
		
		\medskip\noindent\emph{Step 2: applying Lemma \ref{lem1}.}
		For each $i$, let $\mtfk C_i\subset\mtfk C$ be the subcollection consisting of data $(j:(Z,D)\dasharrow M,H)$ such that the generic image of $j$ lies in $M_i$. Then we have $\mtfk C=\bigcup_i\mtfk C_i$. Fix one of $\mtfk C_i$ and take $(j:(Z,D)\dasharrow M,H)\in \mtfk C_i$. Let $C$ denote a general curve cut by $H^{\dim Z-1}$ and let $j_{C,i}:C\dasharrow M_i$ denote the restriction of $j$ on $C$. 
		Step~1 shows that $j_{C,i}$ extends to a morphism $C\to\overline M_i$ belonging to a bounded family. Lemma~\ref{lem1}, applied to the projective target $\overline M_i$, therefore bounds the rational maps. 
		Taking the finite disjoint union over $i$ gives the required boundedness assertion.
	\end{proof}
	\subsection{Moduli maps over curves}
	\begin{lem}\label{curvemap}
		Let $d\in \N$, let $v,v_F,r,\varepsilon\in\Q^{>0}$ with $v_F\leq v$, and let $\Phi\subset[0,1]\cap \Q$ be a finite set. There exist a positive integer $l$ and a  log bounded family $\mtcl P$ of couples, depending only on $(d,\Phi,v,r,\varepsilon)$, with the following property. Let $f:(X,B),A\to (C,H)$ be a weak $(d,\Phi,v,r,\varepsilon)$-polarized log Calabi--Yau fibration over a curve $C$, and assume that $\operatorname{vol}_F(A|_F)=v_F$ for a general fiber $F$. Then there is a  reduced divisor $D$ on $C$ such that $(C,D)\in\mtcl P$ and, writing $U:=C\setminus D$, the restriction $f_U:(X_U,B_U),[lA_U]\to U$ is a family of traditional stable minimal models. Moreover, as $f$ varies, the corresponding moduli maps
		\[
		\mu_f:U\longrightarrow \mtcl M_{TS}:=\mtcl {M}_{TS}\bigl(d-1,\Phi,\{l^{d-1} v_F\},l^{d-1} v_F t^{d-1}\bigr)
		\]
		form a bounded family in the sense of Definition~\ref{bdddef}. The same conclusions remain valid after replacing $l$ by any fixed positive multiple.
	\end{lem}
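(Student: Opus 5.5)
The plan is to deduce everything from the curve case, Theorem~\ref{curvecase}, together with Lemma~\ref{hanjiang}. The moduli map will be read off from a bounded universal family over a finite-type base.

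\emph{Step 1: place the fibrations in a bounded family with their polarizations.} By Theorem~\ref{curvecase}, for every weak $(d,\Phi,v,r,\varepsilon)$-polarized fibration $f:(X,B),A\to(C,H)$ over a curve, $(X,B)$ lies in a fixed bounded family. Moreover, there is $A'\sim_C A$ with $A'$ ample, $\operatorname{vol}(A')\le\lambda$, $A'-f^*H$ pseudo-effective, and $l_0A'$ very ample for a uniform $l_0$.

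We then put $L_X:=l_0A'+c\,f^*H$ for a suitable fixed integer $c$. The aim is that $L_X$ is very ample with bounded volume and that $L_X-B_{\mathrm{red}}$ and $L_X-f^*H$ are pseudo-effective. For the first condition, a bounded multiple of $A'$ dominates $B_{\mathrm{red}}$, by boundedness of $(X,\operatorname{Supp}B)$ and of the degree $A'^{d-1}\cdot B_{\mathrm{red}}$.

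Lemma~\ref{hanjiang}, applied with $L_Z=H$ (and $\deg H\le r$), yields finitely many families
\[
(\mathcal X_i,\mathcal B_i),\mathcal L_{\mathcal X_i}\to(\mathcal Z_i,\mathcal L_{\mathcal Z_i})\to T_i .
\]
Here $\mathcal Z_i\to T_i$ is a family of curves, and every datum appears as a fiber over some $t\in T_i$. Since $lA'\sim_C lA$, the class $[lA_U]$ in $\operatorname{Pic}_{X_U/U}(U)$ equals $[lA'_U]$, so the relative polarization is carried along as well.

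\emph{Step 2: define the bad locus in families.} After stratifying $T_i$, and after a finite base change so that components are flat, let $\mathcal D_i\subset\mathcal Z_i$ be the reduced closed subset over which $\mathcal X_i\to\mathcal Z_i$ fails to be locally stable with klt fibers. We also include in $\mathcal D_i$ the locus where it fails to be flat, or where some fiber is not normal.

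Lemma~\ref{genericls} applies to each fiber curve $C=(\mathcal Z_i)_t$, because $(X,B)$ is klt. It shows that $\mathcal D_i$ is a proper closed subset on each fiber. By Noetherian induction, we may assume $\mathcal D_i$ is finite and flat over $T_i$. Then $(C,D)$ with $D=(\mathcal D_i)_t$ is log bounded, and this gives $\mathcal P$.

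Over $\mathcal U_i:=\mathcal Z_i\setminus\mathcal D_i$ the family is a family of traditional stable minimal models:
\begin{itemize}
\item $K+B\sim_{\mathbb Q}0$ on fibers, since $K_X+B\sim_{\mathbb Q,C}0$;
\item $lA'$ is Cartier and relatively ample for $l$ a multiple of $l_0$;
\item $(lA')^{d-1}$ on a fiber is $l^{d-1}v_F$, so the volume datum and the polynomial $\sigma(t)=l^{d-1}v_Ft^{d-1}$ are as stated. Note that $K+B$ is trivial on fibers, so $(K+B+tlA)^{d-1}=t^{d-1}(lA)^{d-1}$.
\end{itemize}
The coefficients lie in $\Phi$ by Lemma~\ref{finitecoeffrefinement}.

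\emph{Step 3: boundedness of the moduli maps.} The family over $\mathcal U_i$ gives a single morphism $\mathcal U_i\to\mathcal M_{TS}$. Its restriction to each fiber $\mathcal U_{i,t}$ is $\mu_f$. Hence the maps $\mu_f$ are bounded in the sense of Definition~\ref{bdddef}, using $T'=\coprod T_i$ and the family $\coprod\mathcal U_i\to\coprod T_i$.

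The main obstacle is matching the identifications. The isomorphism from Lemma~\ref{hanjiang} must respect the polarization class over $U$: $(\mathcal L_{\mathcal X_i})_t$ is identified with $L_X$, not with $lA$. To handle this, I would include $l_0A'$ itself as a separate line bundle in the Chow/Hilbert data, or note that $L_X$ differs from $l_0A'$ by a pullback from $C$. In the latter case their classes in $\operatorname{Pic}_{X_U/U}$ agree, so replacing $l$ by multiples of $l_0$ gives the claim. Multiples of $l$ are then handled by tensor powers of the same class.
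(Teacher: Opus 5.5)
Your proposal is correct and follows essentially the same route as the paper. The paper also uses Theorem~\ref{curvecase} to get $A'\sim_C A$ with a uniformly very ample multiple, and Lemma~\ref{hanjiang} to spread the data over finitely many bases. It then uses Lemma~\ref{genericls} with Noetherian induction to cut out a bad locus $\mathcal D_i$, finite over $T_i$, so that the family over $\mathcal U_i=\mathcal Z_i\setminus\mathcal D_i$ gives one morphism to $\mathcal M_{TS}$, and matches the polarization exactly as you do, by noting that $lA'\sim_C lA$ defines the same class in $\operatorname{Pic}_{X_U/U}(U)$. The twist $L_X=l_0A'+c\,f^{\ast}H$ is harmless but unnecessary, since $A'-f^{\ast}H$ is already pseudo-effective; the paper just takes a bounded multiple of $A'$ so that $L-B_{\mathrm{red}}$ is pseudo-effective.
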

	\begin{proof}
		By Theorem~\ref{curvecase}, there are a positive integer $l$ and a constant $\lambda$ such that every fibration under consideration admits $A'\sim_C A$ with
		\[
		L:=lA'
		\]
		very ample and $L^d\leq l^d\lambda$. The polarized varieties $(X,L)$ are therefore bounded. Moreover,
		\[
		L^{d-1}\cdot f^{\ast}H=l^{d-1}v_F\deg H
		\]
		is bounded. Since the coefficients of $B$ lie in the fixed finite set $\Phi$ and $K_X+B\sim_{\Q}f^{\ast}N$ with $\deg N\leq\deg H\leq r$, the numbers $L^{d-1}\cdot B_{\mathrm{red}}$ are bounded as well. Hence possibly replacing $l$ with a bounded multiple, we may assume that
		\[
		L-B_{\mathrm{red}}
		\qquad\text{and}\qquad
		L-f^{\ast}H
		\]
		are pseudo-effective.

		Applying Lemma~\ref{hanjiang} to $(X,B),L\to(C,H)$ gives finitely many diagrams
		\[
		\begin{tikzcd}
			{(\mathcal X_i,\mathcal B_i),\mathcal J_i}
			\arrow[r,"g_i"]
			\arrow[dr]
			&
			{\mathcal C_i}
			\arrow[d]
			\\
			&
			T_i
		\end{tikzcd}
		\]
		over bases of finite type such that $K_{\mtcl X_i}+\mtcl B_i\sim_{\Q,\mtcl C_i}0$ and every
		\[
		f:(X,B),lA^{\prime}\longrightarrow C
		\]
		is isomorphic to a fiber of one of these diagrams, including its boundary, fibration, and polarization. 
		
		After a further stratification, the numerical invariants are constant and every $\mathcal J_i$ is relatively ample. By Lemma~\ref{genericls}, possibly passing to a further stratification, we may assume there is an open subset $\mathcal U_i\subset\mathcal C_i$, meeting every relevant fiber, such that
		\[
		(\mathcal X_i,\mathcal B_i)|_{\mathcal U_i},
		[\mathcal J_i|_{\mathcal U_i}]
		\longrightarrow\mathcal U_i
		\]
		is a family of traditional stable minimal models. After further stratification, we may assume that $\mathcal U_i\to T_i$ is smooth and   $\mathcal D_i:=\operatorname{red}(\mathcal C_i\setminus\mathcal U_i)$ is flat with reduced divisorial fibers over $T$.  The couples
		\[
		\bigl((\mathcal C_i)_t,\operatorname{red}((\mathcal D_i)_t)\bigr)
		\]
		form a log bounded family. The stable-minimal-model family induces a morphism
		\[
		\mathcal U_i\longrightarrow
		\mtcl M_{TS}\bigl(d-1,\Phi,\{l^{d-1} v_F\},
		l^{d-1} v_F t^{d-1}\bigr).
		\]
		The finitely many morphisms above form a bounded collection in the sense of Definition~\ref{bdddef}.
		
		Finally, $A'\sim_C A$, so $lA'$ and $lA$ determine the same class in the relative Picard functor. Hence the moduli morphism associated with the original polarization is a fiber of one of the finitely many families above. This proves the lemma.
	\end{proof}
	
	\subsection{Controlling bad locus}
	\begin{lem}\label{stablecomparison}
		Let $S$ be a smooth variety and let $V\subset S$ be an open subset with $\operatorname{codim}_S(S\setminus V)\geq2$. Let $f:(X,B)\to S$ be  log Calabi-Yau fibration from a klt pair. Let $A$ be an $f$-ample integral $\Q$-Cartier divisor on $X$ whose restriction to $X_V:=f^{-1}(V)$ is Cartier. Let $g:(Y,\Delta)\to S$ be a projective locally stable family with  klt fibers, and let $L$ be a $g$-ample line bundle on $Y$. Suppose that there is an isomorphism
		\[
		\alpha_V:(X_V,B_V)\xrightarrow{\sim}(Y_V,\Delta_V)
		\]
		over $V$ such that $A|_{X_V}\sim_{V}\alpha_V^{\ast}L_{Y_V}$. Then $\alpha_V$ extends uniquely to an isomorphism $\alpha:(X,B)\xrightarrow{\sim}(Y,\Delta)$ over $S$. Moreover, $A$ is Cartier and
		\[
		A\sim_S \alpha^{\ast}L
		\]
	\end{lem}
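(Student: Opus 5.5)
The plan is to realize both $X$ and $Y$ as relative $\operatorname{Proj}$ of section rings that agree over $V$, and then to use the codimension-two hypothesis to extend sections by Hartogs-type arguments.

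First I would record the singularities. By Lemma~\ref{fibtot}, $(Y,\Delta)$ is klt and $Y$ is normal, since $S$ is smooth and $g$ is locally stable with klt, hence normal, fibers. Also $K_Y+\Delta$ is $\Q$-Cartier, and $K_{Y/S}+\Delta$ is $\Q$-Cartier by the definition of local stability. The pair $(X,B)$ is klt and $X$ is normal by hypothesis. Next, $f^{-1}(S\setminus V)$ and $g^{-1}(S\setminus V)$ have codimension at least two in $X$ and $Y$. For $Y$ this follows from flatness of $g$. For $X$ it is the main subtle point, since $f$ need not be equidimensional a priori. I would handle it as follows. Because $(X,B)$ is klt and $K_X+B\sim_{\Q,S}0$, any divisor $E\subset X$ with $\operatorname{codim} f(E)\ge 2$ is exceptional over $S$. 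Such divisors can be ruled out by comparing with $Y$: $\alpha_V$ gives a birational map $X\dashrightarrow Y$ over $S$ that is an isomorphism over $V$. Both $(X,B)$ and $(Y,\Delta)$ are klt with $K+\text{boundary}\sim_{\Q,S}0$ (for $Y$ this uses $\Q$-linear triviality over $V$ and that $S\setminus V$ has codimension $\ge 2$ in the smooth $S$). Hence the two pairs are crepant birational over $S$. A divisor on $X$ over $S\setminus V$ would then have log discrepancy $1-\operatorname{coeff}$ computed on $Y$, where its center lies in $g^{-1}(S\setminus V)$, of codimension $\ge2$. Its coefficient in $B$ would therefore be $0$ while its log discrepancy exceeds $1$, which is a contradiction once I note that it is a divisor on $X$ with coefficient $0$. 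I expect to make this precise by a standard negativity argument, possibly needing care, and I flag it as the \textbf{main obstacle}: showing $f$ has no divisors over $S\setminus V$, so that $X\dashrightarrow Y$ is an isomorphism in codimension one.

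Granting this, the strict transform of $A$ on $Y$ is a Weil divisor $A_Y$ whose restriction to $Y_V$ is linearly equivalent over $V$ to $L|_{Y_V}$. Write $A|_{X_V}-\alpha_V^{\ast}L=\operatorname{div}(a)+f^{\ast}M_V$ with $M_V$ a Cartier divisor on $V$. Since $S$ is smooth and $\operatorname{codim}(S\setminus V)\ge2$, $M_V$ extends to a Cartier divisor $M$ on $S$. Comparing on the normal variety $Y$, whose complement of $Y_V$ has codimension $\ge2$, gives $A_Y\sim L+\operatorname{div}(a)+g^{\ast}M$ as Weil divisors, so $A_Y\sim_S L$. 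In particular $A_Y$ is Cartier and $g$-ample. By the same Lemma~\ref{stricttransformlinearequiv}-type reasoning applied in reverse, $A$ on $X$ is the strict transform of $L$ twisted by the same principal divisor.

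Now both $X$ and $Y$ are normal, projective over $S$, and isomorphic in codimension one. On each of them the divisor $A$ (respectively $A_Y$) is relatively ample and corresponds to the other under strict transform. Section rings of Weil divisors on normal varieties depend only on codimension-one data, so
\[
X\simeq\operatorname{Proj}_S\bigoplus_m f_{\ast}\O_X(mA)\simeq\operatorname{Proj}_S\bigoplus_m g_{\ast}\O_Y(mA_Y)\simeq Y.
\]
This isomorphism $\alpha$ restricts to $\alpha_V$ over $V$ and carries $B$ to $\Delta$, since it carries strict transforms to strict transforms. Uniqueness follows from separatedness and the density of $X_V$. Finally, $A=\alpha^{\ast}A_Y\sim_S\alpha^{\ast}L$, and $A$ is Cartier because $L$ is.
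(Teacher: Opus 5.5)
Your overall route is the paper's: crepant birationality, then $X\dashrightarrow Y$ is small, then $A$ and $L$ correspond under strict transform up to $\sim_S$, and finally $X\simeq Y$ because both are $\operatorname{Proj}_S$ of the same section ring (the paper says both are the log canonical model of $(Y,\Delta+tL)$). Those parts are fine. The gap is exactly the step you flagged, and the reason you give for it does not work. That the center of $E$ on $Y$ lies in $g^{-1}(S\setminus V)$, of codimension at least two, only shows that $E$ is exceptional over $Y$. Since $(Y,\Delta)$ is merely klt and not terminal, this gives $a(E,Y,\Delta)>0$, not $a(E,Y,\Delta)>1$. The negativity lemma does not supply the stronger bound. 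Your assertion that $\operatorname{coeff}_EB=0$ is also unjustified, but you do not need it: $a(E,X,B)=1-\operatorname{coeff}_EB\leq 1$ is all the argument uses.

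The missing idea is to use the local stability of $g$ over the smooth base, not just the codimension of the center. Put $W:=f(E)$, so $\operatorname{codim}_SW\geq 2$. Near a general point of $W$, take two members of a regular system of parameters that vanish on $W$. They define smooth divisors $D_1,D_2$ with $D_1+D_2$ snc and $W\subset D_1\cap D_2$. By Theorem~4.54 of~\cite{book}, $(Y,\Delta+g^{\ast}D_1+g^{\ast}D_2)$ is lc over this neighborhood, which contains the generic point of $W$, i.e.\ the image of the generic point of the center of $E$ on $Y$. The divisors $f^{\ast}D_i$ and $g^{\ast}D_i$ have the same pullback to a common resolution, and $f(E)\subset D_i$. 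Hence
\[
a(E,Y,\Delta)\geq \operatorname{mult}_E f^{\ast}D_1+\operatorname{mult}_E f^{\ast}D_2\geq 2.
\]
Crepancy then gives $1\geq a(E,X,B)=a(E,Y,\Delta)\geq 2$, a contradiction, so $f$ has no divisors over $S\setminus V$. With this repair your proof is complete. It replaces, by a direct discrepancy argument, the paper's citation of Corollary~4.55 of~\cite{book}; the paper uses that citation to conclude that $f$ itself is locally stable, hence flat, so that $X\setminus X_V$ has codimension at least two.
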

	\begin{proof}
		By Lemma~\ref{fibtot}, $Y$ is normal and $(Y,\Delta)$ is klt. Flatness of $g$ implies that $Y\setminus Y_V$ has codimension at least two. Thus $(Y|_V,\Delta|_V)$ is log Calabi-Yau over $V$ implies $(Y,\Delta)$ is log Calabi-Yau over $S$. In particular, $(X,B)$ and $(Y,\Delta)$ are crepant birational. 
		
		By \cite[Corollary~4.55]{book}, $f:(X,B)\to S$ is locally stable. Thus $X\setminus X_V$ also has codimension at least two. Thus the birational map $Y\dasharrow X$ induced by $\alpha_V$ is small and it is the log canonical model of $(Y,\Delta+tL)$, where $t>0$ is a sufficiently small rational number. Thus $\alpha_V$ extends to an isomorphism $\alpha:X\to Y$ and other assertions follows.

	\end{proof}
	\begin{lem}\label{bddopen}
		Fix $d\in\N$, $v,r,\varepsilon\in\Q^{>0}$, and a finite set $\Phi\subset[0,1]\cap\Q$. There exist a positive integer $l$ and a log bounded family $\mtcl P$ of couples, depending only on these data, with the following property. For every weak $(d,\Phi,v,r,\varepsilon)$-polarized log Calabi--Yau fibration
		\[
		f:(X,B),A\longrightarrow(Z,H),
		\]
		there is a reduced divisor $D$ on $Z$ and an big open subset $V\subset U:=Z\setminus D$ such that,
		\begin{itemize}
			\item $(Z,D)\in\mtcl P$ and $U$ is smooth, and
			\item $(X_V,B_V),[lA_V]\to V$ is a family of traditional stable minimal models.
		\end{itemize}
	\end{lem}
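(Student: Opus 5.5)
We define the bad locus directly and bound its divisorial part by restricting to general curves. Fix $l$ as in Lemma~\ref{curvemap}, enlarged so that it also works for Theorem~\ref{curvecase}. For a given fibration $f:(X,B),A\to(Z,H)$, let $Z^{\circ}\subset Z$ be the largest open subset with the following properties: $Z^{\circ}$ is smooth, $lA$ is Cartier over $Z^{\circ}$, and $(X,B)\to Z$ is locally stable with klt fibers over $Z^{\circ}$. By Lemma~\ref{genericls} this set is nonempty; openness follows from standard openness of local stability and of the klt condition on a smooth base. Write the complement $Z\setminus Z^{\circ}$ as $D\cup W$, where $D$ is the reduced divisorial part and $\operatorname{codim}W\geq2$.

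We then set $U:=Z\setminus D$. Since the singular locus of $Z$ need not be divisorial, we first enlarge $D$ by a divisor containing $\operatorname{Sing}(Z)$; we return to this at the end. With this choice, $V:=Z^{\circ}\subset U$ is big. Over $V$ the family $(X_V,B_V)\to V$ is locally stable with klt fibers. Each fiber satisfies $K+B\sim_{\Q}0$, so it is semiample, and $lA$ is Cartier and relatively ample. Hence $(X_V,B_V),[lA_V]\to V$ is a family of traditional stable minimal models, which is the second bullet.

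The main step is to bound $(Z,D)$, and for this we use $D\cdot H^{\dim Z-1}$. Let $C$ be a general curve cut out by $H^{\dim Z-1}$. Then $f_C:(X_C,B_C),A_C\to(C,H|_C)$ is again a weak $(d',\Phi,v,r',\varepsilon)$-polarized fibration. By Bertini, $X_C$ is normal and $\varepsilon$-lc by adjunction to general hyperplanes; moreover $\deg H|_C=H^{\dim Z}\leq r$, and $H|_C-N|_C$ is pseudo-effective because $H-N$ is pseudo-effective and $C$ is a movable complete intersection. Lemma~\ref{curvemap} therefore gives a divisor $D_C$ on $C$ of bounded degree, outside which $f_C$ is a stable-minimal-model family. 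The key claim is that $D\cap C\subset D_C$ for general $C$. Indeed, at a general point $z$ of a component of $D$, the curve $C$ passes transversally and $Z$ is smooth there. If $(X_C,B_C)$ is locally stable with klt fibers and $lA_C$ is Cartier near $z$, then inversion of adjunction along the general slices, in the spirit of Lemma~\ref{fibtot} applied to the pair $(X,B+f^{*}(\text{general hyperplanes through }z))$, shows that $f$ is locally stable with klt fibers near $X_z$. For the Cartier condition on $lA$ we appeal to a Kollár-type criterion: on a locally stable family, a divisor whose restriction to the fibers is Cartier is itself Cartier. Granting this, $\deg_H D\leq\deg D_C$ is bounded, and this is the step I expect to be the main obstacle, namely transferring local stability and the Cartier property from a general curve slice back to codimension one points of $Z$.

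Finally, $Z$ with $H$ very ample and $H^{\dim Z}\leq r$ is bounded, and $D$ has bounded $H$-degree, so $(Z,D)$ lies in a log bounded family by Chow boundedness. To make $U$ smooth, we add to $D$ a general member of $|kH|$ that contains $\operatorname{Sing}(Z)$, where $k$ is bounded in the bounded family (for example via a Fitting ideal of the Jacobian, handled by stratification). This does not change log boundedness. Removing this member only shrinks $U$, and $V$ remains big after intersecting it with the new $U$. This yields $\mtcl P$ and $l$ depending only on $d,\Phi,v,r,\varepsilon$.
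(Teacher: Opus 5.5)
Your proof follows the paper's argument step for step. Both use a maximal good open subset, a general complete intersection curve $C$, and Lemma~\ref{curvemap} applied on $C$. Both get the inclusion $C\cap D\subseteq D_C$ from \cite[Theorem~4.54]{book} together with inversion of adjunction along the slices. Both finish by adding a bounded $Q\in|aH|$ that passes through $Z_{\mathrm{sing}}$ and shares no component with $D$.

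The one real variation is how you handle the Cartier condition.
\begin{itemize}
\item The paper takes the good locus $W$ to be only the smooth locally stable klt locus. It cites \cite[Corollary~1.6]{birkar2022geometrypolarisedvarieties} and \cite[Theorem~1.2]{Han_2026} for a uniform $l$ making $lA$ relatively Cartier over all of $W$.
\item You build Cartierness of $lA$ into the definition of $Z^{\circ}$ and transfer it back from $C$.
\end{itemize}
Your transfer works, but the criterion you quote, stated for arbitrary locally stable families, is not the right justification. What you actually need is the following chain.
\begin{itemize}
\item $lA_C$ is Cartier on all of $X_C$: Theorem~\ref{curvecase} makes $lA'$ very ample, and $A-A'$ is pulled back from the smooth curve $C$. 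Hence $lA|_{X_z}$ is Cartier.
\item Once $f$ is known to be locally stable with klt fibers near $z$, Lemma~\ref{fibtot} shows $(X,B)$ is klt there. Then $\mathcal O_X(lA)$ is Cohen--Macaulay, since $lA$ is a $\Q$-Cartier Weil divisor on a klt variety.
\item Being Cohen--Macaulay, $\mathcal O_X(lA)$ is flat over the smooth base, and $\mathcal O_X(lA)|_{X_z}\cong\mathcal O_{X_z}(lA|_{X_z})$.
\item Nakayama then gives local freeness near $X_z$.
\end{itemize}
Your route avoids the external uniform-index input. The price is that you control the non-Cartier locus only in codimension one. That is all the lemma needs, since the remaining bad locus lands in $U\setminus V$.

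Two small fixes are needed.
\begin{itemize}
\item Adjunction gives $K_{X_C}+B_C\sim_{\Q}f_C^{\ast}\bigl(N|_C+(m-1)H|_C\bigr)$ with $m=\dim Z$. So for $m\geq 2$ it is wrong to polarize $C$ by $H|_C$ and check $H|_C-N|_C$. Use $mH|_C$ instead: it has degree $mH^m\leq dr$, and $mH|_C-N_C=(H-N)|_C$ has nonnegative degree. This is what the paper does, yielding a weak $(d-m+1,\Phi,v,dr,\varepsilon)$-polarized fibration.
\item Lemma~\ref{genericls} does not by itself give $Z^{\circ}\neq\emptyset$, because it says nothing about $lA$. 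Nonemptiness instead follows from your own curve argument, applied at any point of $C\setminus D_C$.
\end{itemize}
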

	\begin{proof}
		Let $I_f$ denote the collection of nonempty smooth open subsets of $Z$ over which $f$ is locally stable with klt fibers.
		By Lemma \ref{genericls}, $I_f$ is non-empty. Since $I_f$ is closed under union, there exists a maximal element in $I_f$. Let $W$ denote the maximal element in $I_f$. 
		By~\cite[Corollary~1.6]{birkar2022geometrypolarisedvarieties}  and  \cite[Theorem~1.2]{Han_2026}, there exists a uniform positive integer $l$ such that  $lA$ restricts to a relative Cartier divisor over $W$. Thus $f:(X,B),lA\to Z$ restricts to a family of traditional stable minimal models over $W$. Let $E$ be the reduced divisorial part of $Z\setminus W$.
		
		Put $m:=\dim Z$. If $m=0$, there is nothing to prove, so assume $m>0$. Let $C\subset Z$ be a general curve cut out by $H^{m-1}$. It meets every component of $E$ transversally at a general point and avoids the codimension at least two part of $Z\setminus W$. By adjunction, the restricted fibration over $C$ is a weak
		\(
		(d-m+1,\Phi,v,dr,\varepsilon)
		\)
		-polarized log Calabi--Yau fibration. By Lemma~\ref{curvemap} there exists a reduced divisor $D_C$ of uniformly bounded degree such that the restriction over $C\setminus D_C$ is a family of traditional stable minimal models.
		
		Since $C\subset Z_{\mathrm{reg}}$, by Theorem 4.54 in \cite{book} and adjunction, the locally stable locus of the restriction $f_C$ equals the locally stable locus of $f$ restricted to $C$. Consequently
		\[
		C\cap E\subseteq D_C,
		\qquad
		H^{m-1}\cdot E\leq\deg D_C.
		\]
		The right-hand side is uniformly bounded because the couples $(C,D_C)$ form a log bounded family. Thus the couples $(Z,E)$ are log bounded.
		
		By boundedness, one can choose a uniform $a>0$ such that for each $(Z,H)$ there exists an effective divisor $Q\in|aH|$ containing $Z_{\mathrm{sing}}$. One can also choose $Q$ such that it has no component in common with $E$. Set
		\[
		D:=E+Q,
		\qquad U:=Z\setminus D,
		\qquad V:=W\cap U.
		\]
		Then $U$ is smooth, $U\setminus V$ has codimension at least two, and $(Z,D)$ forms a log bounded family. By construction, $f_V:(X_V,B_V),[lA_V]\to V$ is a family of traditional stable minimal models.
	\end{proof}
	
	\subsection{Bounded models and conclusion}
	\begin{thm}\label{corethm}
		Let \(d \in \mathbb{N}\), let \(v,r,\varepsilon \in \mathbb{Q}^{>0}\), and let \(\Phi \subset [0,1]\cap\mathbb{Q}\) be a finite set. Then there exist a positive integer \(l\) and a log bounded family \(\mathcal{P}\) of pairs, depending only on \(d,\Phi,v,r,\varepsilon\), with the following property.
		Assume that \(f:(X,B),A\to(Z,H)\) is a weak \((d,\Phi,v,r,\varepsilon)\)-polarized log Calabi--Yau fibration and that \(G\geq0\) is a general element of \(|6dH|\). Then there exist a normal variety \(V\), a birational map $V\dasharrow X$, and an effective integral divisor \(J\) on \(V\) such that
		
		\begin{itemize}
			\item there is a contraction \( h : V \to Z \) and \( V \) is \( \mathbb{Q} \)-factorial,
			\item the birational map \(V\dashrightarrow X\) over \(Z\) is an isomorphism in codimension one,
			\item writing $B_V$ for the strict transform of $B$ on $V$, one has
			\[
			\left(V,B_V+\frac12h^{\ast}G+J\right)\in\mathcal P,
			\]
			\item \(J|_{V_\eta}\sim lA_V|_{V_\eta}\), where \(\eta\) is the generic point of \(Z\) and \(A_V\) is the strict transform of \(A\) on \(V\).
		\end{itemize}
	\end{thm}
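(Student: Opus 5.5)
We will follow the strategy sketched in the introduction: first bound the moduli map over the good open set $U$, then bound the family of traditional stable minimal models over $U$ together with its polarization, and finally compactify and compare with $X$.

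\emph{Step 1: a bounded good locus.} Apply Lemma~\ref{bddopen} to obtain $l$, a reduced divisor $D$ with $(Z,D)$ log bounded, $U=Z\setminus D$ smooth, and a big open subset $V_0\subset U$ over which $(X_{V_0},B_{V_0}),[lA_{V_0}]\to V_0$ is a family of traditional stable minimal models. Stratifying by the finitely many possible values of $\operatorname{vol}_F(A|_F)$ and of the Hilbert polynomial, we may assume that all such families map to a fixed stack $\mtcl M_{TS}$ as in Lemma~\ref{curvemap}. This gives moduli maps $\mu:V_0\to\mtcl M_{TS}$, and hence a rational map $j:Z\dashrightarrow M_{TS}$ to the coarse space.

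\emph{Step 2: bounding the moduli maps.} For a general curve $C$ cut out by $H^{\dim Z-1}$, the curve $C$ lies in $V_0$ away from $D$. By adjunction, the restriction of $f$ over $C$ is a weak polarized fibration of the type treated in Lemma~\ref{curvemap}, so the maps $j|_{C^\circ}$ belong to a bounded family. Lemma~\ref{algspacetarget} then bounds the rational maps $j$ together with $(Z,D),H$. After enlarging $D$ by a general member of a bounded multiple of $H$ containing the indeterminacy and the non-regular locus in the universal family, we may assume $j$ is a morphism on $U$, with $U$ still smooth. Theorem~\ref{hartogslift} extends $\mu$ uniquely to $U\to\mtcl M_{TS}$. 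The coarse maps $U\to M_{TS}$ are now bounded, so Theorem~\ref{shaf} shows that the stack maps $U\to\mtcl M_{TS}$ are bounded. Here we use $U\to T$ smooth at infinity, which holds after stratification since $U$ is smooth, and we use compactifiability via Proposition~\ref{tscompact}.

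\emph{Step 3: building $V$ and $J$.} Pulling back the universal family along the bounded maps, and using Lemma~\ref{globalpicardclass} to replace $l$ by a bounded multiple so that the class is represented by a line bundle, we obtain a bounded family $(Y_U,\Delta_U),L_U\to U$. Lemma~\ref{stablecomparison}, applied over $U$ with the big open $V_0$, identifies $(X_U,B_U)$ with $(Y_U,\Delta_U)$ and gives $lA_U\sim_U L_U$. We then take bounded compactifications $\overline Y\to Z$ in the universal family, a bounded log resolution, and add the components over $D$ and $\frac12 h^*G$. Running an MMP over $Z$ for $K+B+(\text{reduced boundary})$, as in \cite{jjz25}, produces a $\Q$-factorial model $V$ isomorphic to $X$ in codimension one. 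The termination and boundedness of this MMP follow from the bounded-family arguments, using $G\in|6dH|$ to absorb vertical components via length-of-extremal-rays estimates. For $J$ we take the closure of a member of $|L_U|$ extended with bounded degree; it is effective and satisfies $J|_{V_\eta}\sim lA_V|_{V_\eta}$. Lemma~\ref{stricttransformlinearequiv} transports the relevant linear equivalences.

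\emph{Main obstacle.} I expect the main obstacle to be Step 3's passage from the bounded family over $U$ to a bounded model $V$ over all of $Z$ that is isomorphic to $X$ in codimension one. This requires controlling the divisors of $X$ lying over $D$. My first approach would be to bound them by the log canonical places of $(\overline Y,\overline\Delta+\overline Y_D)$ with bounded discrepancy, since $X$ is $\varepsilon$-lc and crepant to $Y$ over $U$. I would then extract exactly these divisors using \cite{jjz25}-style arguments.
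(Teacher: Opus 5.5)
Your Steps~1--2 and the first half of Step~3 (Lemma~\ref{bddopen}, bounding $j$ through general curves and Lemma~\ref{algspacetarget}, enlarging $D$ so that $j$ is regular on a smooth $U$, Theorem~\ref{hartogslift}, Theorem~\ref{shaf}, Lemma~\ref{globalpicardclass}, Lemma~\ref{stablecomparison}) coincide with the paper's Steps~1--2. The gap is where you locate it, in passing to $V$, and the mechanism you propose fails for three reasons.

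\emph{No divisors are produced.} An MMP only contracts divisors, so every prime divisor of $X$ lying over $D$ must already be on your bounded resolution. You give no reason for this. Lc places of $(\overline Y,\overline\Delta+\overline Y_D)$ do not help: they form an infinite set, and the vertical divisors of $X$ need not be among them, since $K_{\overline Y}+\overline\Delta$ and $K_X+B$ differ over $D$ by an uncontrolled vertical divisor.

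\emph{Coefficient $1$ breaks the comparison with $X$.} Let $P\subset X$ be a vertical prime over a component $D_i$ of $D$ that lies on $Y'$. Then $a(P,Y',\Delta')=0<1-b_P=a(P,X,B)$, although $P$ is not exceptional over $X$. Hence $K_{Y'}+\Delta'\sim_{\mathbb{Q},Z}M$ with $M\geq0$ containing the vertical divisors of $X$. On the output of an MMP over $Z$, $M_V$ is nef over $Z$, so near the generic point of $D_i$ it is a multiple of $h^{\ast}D_i$. Suppose $f^{\ast}D_i$ has components $P_1,P_2$ of multiplicities $1$ and $2$, not in $\operatorname{Supp}B$; both have coefficient $1$ in $M$. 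Then they cannot both survive, and $V$ is not isomorphic to $X$ in codimension one.

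\emph{No uniformity.} ``Bounded-family arguments'' do not by themselves make an MMP run uniformly over the parameter space.

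The paper handles all three points as follows. Take a log resolution $\mathcal Y'\to\mathcal X$ that is log smooth over $T$. Give the exceptional divisors over $\mathcal X_U$ and all divisors over $D$ coefficient $1-\varepsilon/2$, give $\mathcal G'$ coefficient $\frac12$, and blow up until the fibers are terminal. Since $(X,B)$ is $\varepsilon$-lc, every prime divisor on $\mathcal Y'_t$ has log discrepancy at most its log discrepancy for $(X,B)$. Since $q^{\ast}(K_X+B)$ is pulled back from $Z$, the negativity lemma gives $p^{\ast}(K_{\mathcal Y'_t}+\Delta'_t)\geq q^{\ast}(K_X+B)$. Terminality then forces $\mathcal Y'_t\dashrightarrow X$ to be a birational contraction, so all divisors of $X$, including those over $D$, lie on $\mathcal Y'_t$.

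One then takes $\Gamma'_t$ with the coefficients of $B+\frac12f^{\ast}G$ on divisors of $X$ and $1-\varepsilon/2$ on the others. There are finitely many such choices, spread out by Lemma~\ref{finitecoeffrefinement}. The role of $G\in|6dH|$ is not to absorb vertical components. It makes $K_{X'}+B'_X+\frac12(f')^{\ast}G\sim_{\mathbb{Q}}(f')^{\ast}(N+3dH)$ globally semi-ample, because the generalized cone theorem gives $N+2dH$ nef. Thus a small $\mathbb{Q}$-factorialization $X'$ of $X$ is a good minimal model of $(\mathcal Y'_t,\Gamma'_t)$ over $\mathbb C$, not just over $Z$. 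Invariance of good minimal models in log smooth families \cite[Theorem~1.2]{hacon2017boundednessmodulivarietiesgeneral} then yields $\mathcal V\to T$ with fiberwise good minimal models. Finally, \cite[Lemma~2.4]{hacon2012existencelogcanonicalclosures} shows that $\mathcal V_t$ is isomorphic to $X$ in codimension one.
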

	\begin{proof}
		Set $m:=\dim Z$. The case $m=0$ follows directly from~\cite[Corollaries~1.4 and~1.6]{birkar2022geometrypolarisedvarieties}. Since $m\leq d$ has only finitely many positive possibilities when $d$ is fixed, we fix $m\geq1$ for the rest of the proof. Put $n:=d-m$, let $F$ be a general fiber of $f$, and set
		\[
		v_F:=\operatorname{vol}_F(A|_F).
		\]
		By~\cite[Corollary~1.6]{birkar2022geometrypolarisedvarieties}, and  \cite[Theorem~1.2]{Han_2026}, $v_F$ has only finitely many possible values, so we fix one of them as well.
		
		Lemma~\ref{bddopen} gives a log bounded family $\mtcl P$ of couples, an integer $l>0$, a reduced divisor $D$ on $Z$, such that $(Z,D)\in \mtcl P$, $U:=Z\setminus D$ is smooth and for some big open subset $V\subset U$
		\[
		(X_V,B_V),[lA_V]\longrightarrow V
		\]
		is a family of traditional stable minimal models.
		Since $G\in|6dH|$ is general, we may assume that it is reduced and has no component in common with $D$. 
		
		Put
		\[
		D^{\dagger}:=D+\frac12G.
		\]
		Then the couples $(Z,D^{\dagger})$ form a log bounded family whose coefficients belong to $\{1/2,1\}$. 
		
		\medskip\noindent\emph{Step 1: bound the coarse moduli map and remove its indeterminacy.}
		Let $C\subset Z$ be a general curve cut out by $H^{m-1}$, and denote the restricted fibration by
		\[
		f_C:(X_C,B_{X_C}),A_C\longrightarrow C.
		\]
		Then $\dim X_C=n+1$. Choose $N$ on $Z$ with $K_X+B\sim_{\Q}f^{\ast}N$, write $H_C:=H|_C$, and set
		\[
		N_C:=N|_C+(m-1)H_C,\qquad \widehat H_C:=mH_C.
		\]
		Adjunction gives
		\[
		K_{X_C}+B_{X_C}\sim_{\Q}f_C^{\ast}N_C,\qquad
		\widehat H_C-N_C=(H-N)|_C,
		\]
		and
		\[
		\deg_C\widehat H_C=mH^m\leq dr.
		\]
		Moreover, $(X_C,B_{X_C})$ is $\varepsilon$-lc and its coefficients belong to $\Phi$. Hence $f_C$ is a weak $(n+1,\Phi,v,dr,\varepsilon)$-polarized log Calabi--Yau fibration, and its general fibers are of volume $v_F$.
		
		Set
		\[
		\Gamma:=\{l^{n}v_F\},\qquad \sigma(s):=l^{n}v_F s^{n},
		\]
		and write
		\[
		\mathfrak M:=\mtcl M_{TS}(n,\Phi,\Gamma,\sigma),\qquad
		M:=M_{TS}(n,\Phi,\Gamma,\sigma).
		\]
		After replacing $l$ by a multiple if necessary, Lemma~\ref{curvemap} gives a nonzero reduced divisor $D_C$ on $C$ and a morphism $C^{\circ}:=C\setminus D_C\to\mathfrak M$; the couples $(C,D_C)$ are log bounded and these morphisms form a bounded collection. Theorem~\ref{tsmoduli} says that $\mathfrak M$ is a separated Deligne--Mumford stack of finite type with coarse moduli space $M$, a separated algebraic space of finite type. The stack $\mathfrak M$ is compactifiable by Proposition~\ref{tscompact}; alternatively, one may use Theorem~\ref{tame}. The family over $V$ defines a rational coarse moduli map
		\[
		j:Z\dashrightarrow M.
		\]
		By Lemma~\ref{curvemap}, the restrictions $j_C:C\dasharrow M$  form a bounded collection.
		
		By Lemma~\ref{algspacetarget} and Lemma \ref{finitecoeffrefinement}, there exists a flat family with divisorial data
		\[
		(\mtcl Z_0,\mtcl D_0^{\dagger};\mtcl H_0)\longrightarrow T_0
		\]
		and a rational map $\phi_0:\mtcl Z_0\dashrightarrow M$ defined on an open subset $V_0\subset\mtcl Z_0$ meeting every fiber densely, such that for every $j:Z,D^{\dagger};H)\dasharrow M$ as above, there exists $t\in T_0$ with $(Z,D^{\dagger};H)\simeq ((\mtcl Z_0)_t,(\mtcl D_0^{\dagger})_t;(\mtcl H_0)_t)$ and $j=(\phi_0)_t$ under this identification. Write
		\[
		\mtcl D_0^{\dagger}=\mtcl E_0+\frac12\mtcl G_0,
		\]
		where $\mtcl E_0$ and $\mtcl G_0$ are the sums of the coefficient-$1$ and coefficient-$1/2$ components, respectively.

		Then every relevant fiber of $\mtcl G_0$ is the prescribed divisor $G$, which is an ample Cartier divisor.  
		Thus by the fiberwise Cartier criterion~\cite[Tag~062Y]{stacks-project}, after possibly shrinking $T_0$, we may assume $\mtcl G_0$ is a relative effective Cartier divisor which is ample over $T_0$.
		
		Set
		\[
		\mtcl U_0:=\mtcl Z_0\setminus\operatorname{Supp}(\mtcl E_0).
		\]
		By possibly enlarging $\mtcl E_0$, we may assume $\mtcl U_0\to T_0$ is smooth, and $\Phi_0:=\phi_0|_{\mtcl U_0}:\mtcl U_0\to M$ is a morphism. By possibly passing to a finite stratification on $T_0$, we may assume that $\mtcl U_0$ is dense in every fiber. For $f:(X,B),A\to (Z,H)$ as above, let $U\subset Z$ be a corresponding fiber of $\mtcl U_0$. Since $j|_U:U\to M$ is a morphism, by Lemma \ref{bddopen} and Theorem \ref{hartogslift},  $j|_U$ can be uniquely lifted to a morphism $U\to \mtfk M$.

		\medskip\noindent\emph{Step 2: lift the moduli maps and spread out the polarized families.}
		By Step~1 and Theorem~\ref{shaf}, there exists a finite-type morphism $\mtcl T\to T_0$ such that, setting
		\[
		\mtcl U:=\mtcl U_0\times_{T_0}\mtcl T\longrightarrow\mtcl T,
		\]
		there exists a morphism $\Psi:\mtcl U\to\mathfrak M$ with the following property: every extension $\mu_U$ constructed in Step~1 is obtained by restricting $\Psi$ to a fiber.
		
		By possibly passing to a finite stratification on the family induced by $\Psi$, we obtain a reduced normal scheme $T$ of finite type and a family
		\[
		\phi_U:(\mtcl{X_U},\mtcl {B_U}),[\mathscr A]\longrightarrow
		\mtcl U\subset\mtcl Z\longrightarrow T.
		\]
		By pulling back $\mtcl G_0$, we get a relative effective Cartier divisor $\mtcl G_{\mtcl Z}$ on $\mtcl Z$ which is ample over $T$. By applying  Lemma \ref{globalpicardclass} to $\phi_U$ and possibly changing $l$ and $[\mathscr A]$, we may assume there exists a line bundle $\mtcl {J_U}$ on $\mtcl {X_U}$ representing $[\mathscr A]$. By possibly passing to a stratification on $T$ and possibly shrinking $\mtcl U$, we may assume $\mtcl {J_U}$ is a relative effective Cartier divisor over $\mtcl U$. 
		
		By Lemma \ref{stablecomparison}, for every $f:(X,B),A\to (Z,H)$ as above, there is a closed point $t\in T$ and an identification $\mtcl U_t\subset\mtcl Z_t\simeq U\subset Z$ under which $(\mtcl G_{\mtcl Z})_t=G$ and
		\[
		\bigl((\mtcl {X_U})_t,(\mtcl {B_U})_t\bigr),[\mathscr A]_t
		\longrightarrow\mtcl U_t
		\]
		restricts over $U$ to $(X_U,B_U),[lA_U]\to U$.

		\medskip\noindent\emph{Step 3: extend and resolve the family.}
		Choose a projective extension
		\[
		\phi:\mtcl X\longrightarrow\mtcl Z
		\]
		of $\phi_U$, so that $\mtcl X\times_{\mtcl Z}\mtcl U\simeq\mtcl {X_U}$, and replace $\mtcl X$ by its normalization. Let $\mtcl B$ and $\mtcl J$ be the closures of $\mtcl {B_U}$ and $\mtcl {J_U}$ on $\mtcl X$, and set
		\[
		\mtcl G_{\mtcl X}:=\phi^{\ast}\mtcl G_{\mtcl Z}.
		\]
		
		By possibly passing to a stratification on $T$, we may assume $\mtcl X\to T$ is flat and $\mtcl{G_X}$ is a relative effective Cartier divisor.
		
		Take a resolution
		\[
		r:\mtcl Y^{\prime}\longrightarrow\mtcl X
		\]
		
		such that
		\[
		\left(\mtcl Y^{\prime},
		\mtcl B^{\prime}+\mtcl G^{\prime}+\mtcl J^{\prime}
		+\mtcl E^{\prime}+\mtcl F^{\prime}\right)
		\]
		is relatively log smooth over $T$. Here $\mtcl B^{\prime},\mtcl G^{\prime},\mtcl J^{\prime}$ are the strict transforms of $\mtcl B,\mtcl G_{\mtcl X},\mtcl J$, respectively, $\mtcl E^{\prime}$ is the closure of the reduced exceptional divisor of
		\[
		r^{-1}(\mtcl X_U)\longrightarrow\mtcl X_U,
		\]
		and
		\[
		\mtcl F^{\prime}:=
		\left(\mtcl Y^{\prime}\setminus r^{-1}(\mtcl X_U)\right)_{\mathrm{red}}.
		\]
		Define
		\[
		\Delta^{\prime}:=\mtcl B^{\prime}+\frac12\mtcl G^{\prime}
		+\left(1-\frac{\varepsilon}{2}\right)(\mtcl E^{\prime}+\mtcl F^{\prime}).
		\]
		The pair $(\mtcl Y^{\prime},\Delta^{\prime})$ is an $\varepsilon/2$-lc pair which is log smooth over $T$. By successively blowing up, we may further assume that $(\mtcl Y^{\prime},\Delta^{\prime})$ has terminal fibers over $T$. 
		
		\medskip\noindent\emph{Step 4: run the MMP in family.}
		Fix $f:(X,B),A\to (Z,H)$ and $t\in T$ correspondingly. The identification over  $\mtcl U_t\simeq U$ induces a birational map $\pi:\mtcl Y_t^{\prime}\dashrightarrow X$. If $p:W\to\mtcl Y_t^{\prime}$ and $q:W\to X$ resolve its graph, then the $\varepsilon$-lc property of $(X,B)$ and the negativity lemma give
		\[
		p^{\ast}(K_{\mtcl Y_t^{\prime}}+\Delta_t^{\prime})
		\geq q^{\ast}(K_X+B).
		\]
		Since $(\mtcl Y_t^{\prime},\Delta_t^{\prime})$ is terminal, the displayed inequality implies that $\pi$ is a birational contraction.
		
		Take a small $\mathbb Q$-factorialization $\rho:X^{\prime}\to X$, set $f^{\prime}:=f\circ\rho$, and let $B^{\prime}_X$ be the strict transform of $B$. Then $(X^{\prime},B^{\prime}_X)$ is $\mathbb Q$-factorial and $\varepsilon$-lc, and the induced map $\pi^{\prime}:\mtcl Y_t^{\prime}\dashrightarrow X^{\prime}$ is a birational contraction. Let $E_t^{\prime}$ be its reduced exceptional divisor and let $\Gamma_t^{\prime}$ be the sum of $(1-\varepsilon/2)E_t^{\prime}$ and the strict transform of
		\[
		B_X^{\prime}+\frac12(f^{\prime})^{\ast}G.
		\]
		The canonical bundle formula writes $N=K_Z+B_Z+\mathbf M_Z$ with $(Z,B_Z,\mathbf M)$ generalized lc. The generalized cone theorem~\cite[Theorem~5.1(2)]{haconliu2023flips} shows that $N+2dH$ is nef, so $N+3dH$ is ample. Hence
		\[
		K_{X^{\prime}}+B_X^{\prime}+\frac12(f^{\prime})^{\ast}G
		\sim_{\Q}(f^{\prime})^{\ast}(N+3dH)
		\]
		is semi-ample and defines $f^{\prime}$. Consequently
		\[
		(\mtcl Y_t^{\prime},\Gamma_t^{\prime})\dashrightarrow
		\left(X^{\prime},B_X^{\prime}+\frac12(f^{\prime})^{\ast}G\right)
		\]
		is a good minimal model.
		
		By construction, the support of $\Gamma_t^{\prime}$ is contained in the $\operatorname{Supp}\Delta^{\prime}_t$, and its coefficients belong to the finite set
		\[
		\Phi\cup\left\{\frac12,1-\frac{\varepsilon}{2}\right\}.
		\]
		By Lemma \ref{finitecoeffrefinement}, after possibly changing $T$, we may assume there exists a divisor $\Gamma^{\prime}$ on $\mtcl Y^{\prime}$ with
		\[
		\Gamma_t^{\prime}=\Gamma^{\prime}|_{\mtcl Y_t^{\prime}}
		\]
		for every relevant point $t\in T$.
		By possibly  passing to a stratification on $T$ and discarding irrelevant components, we may assume $T$ is smooth and  each connected component of $T$ contains a point whose fiber has a good minimal model. By Theorem~1.2 of~\cite{hacon2017boundednessmodulivarietiesgeneral}, possibly further stratify, we may assume $(\mtcl Y^{\prime},\Gamma^{\prime})$ admits a fiberwise good minimal model $(\mtcl V,\Gamma)\to T$. By boundedness of extremal rays, the induced map $\mtcl V\dasharrow \mtcl Z$ is a morphism.
		
		Let $\mtcl{J_V}$ be the strict transform of $\mtcl J^{\prime}$ on $\mtcl V$. For every relevant $t$, Lemma~2.4 of~\cite{hacon2012existencelogcanonicalclosures} shows that $(\mtcl V_t,\Gamma_t)$ is isomorphic in codimension one to
		\[
		\left(X^{\prime},B_X^{\prime}+\frac12(f^{\prime})^{\ast}G\right),
		\]
		and hence to $(X,B+\frac12f^{\ast}G)$. By Lemma~\ref{stricttransformlinearequiv}, over  $U\simeq \mtcl U_t$ we have
		\[
		(\mtcl {J_V})_t|_U\sim_U lA_{\mtcl V_t}|_U.
		\]
		Thus $(\mtcl V_t,\Gamma_t),(\mtcl {J_V})_t$ are what we need.	
		
	\end{proof}
	We now deduce Theorem~\ref{mainthm} from Theorem~\ref{corethm}. The argument is exactly the same as the proof of Theorem 1.7 in \cite{jjz25}. We present the argument for readers' convenience.
	\begin{proof}[Proof of Theorem~\ref{mainthm}]
		\medskip\noindent\emph{Step 1: bounding vertical divisors.}
		By Theorem~\ref{corethm}, there exist a contraction $h:V\to Z$ from a normal variety $V$, an effective integral divisor $J$ on $V$, and a positive integer $l$, depending only on $d,\Phi,v,r,\varepsilon$ with the following properties: Writing $B_V$ for the strict transform of $B$ on $V$, taking a general element $G\in |6dH|$ and setting
		\[
		\Delta_V:=B_V+\frac{1}{2}h^{\ast}G,
		\]
		we have
		\begin{itemize}
			\item $V$ is $\mathbb{Q}$-factorial;
			
			\item the birational map $V\dashrightarrow X$ over $Z$ is an isomorphism in codimension one;
			\item $(V,\Delta_V+J)$ forms a log bounded family; and
			\item $J|_{V_\eta}\sim lA_V|_{V_\eta}$, where $\eta$ is the generic point of $Z$ and $A_V$ is the strict transform of $A$ on $V$.
		\end{itemize}
		By Lemma~5.2 in~\cite{jjz25}, there exist distinct vertical prime divisors $D_1,\ldots,D_k$ over $Z$ and rational numbers $a_i\geq0$ such that $(V,B_V+J+\sum D_i)$ is log bounded and
		\[
		J + \sum a_i D_i \sim_{\mathbb{Q},Z} lA_V.
		\]
		The coefficients of $B_V+J$ belong to a fixed finite set by the preceding construction. Since the $D_i$ are distinct prime divisors appearing with coefficient one in the displayed log bounded divisor, the coefficients of $B_V+J+\sum D_i$ also belong to a finite set. By Lemma~\ref{finitecoeffrefinement} and  Lemma~\ref{hanjiang}, there exist finitely many projective families
		\[
		g_\lambda:(\mathcal V_\lambda,\mathcal B_{\mathcal V_\lambda};
		\mathcal J_\lambda,\mathcal D_{1,\lambda},\ldots,\mathcal D_{k,\lambda})
		\longrightarrow(\mathcal Z_\lambda,\mathcal L_\lambda)
		\longrightarrow\mathcal S_\lambda,
		\qquad 1\leq\lambda\leq M,
		\]
		where
		\begin{itemize}
			\item  $\mathcal S_\lambda$ are  quasi-projective normal varieties; \item $(\mtcl V_{\lambda},\mtcl B_{\mtcl V_{\lambda}})$ are klt;
			\item  $\mtcl{Z}_{\lambda}$ are normal;
			\item  \(
			K_{\mathcal V_\lambda}+\mathcal B_{\mathcal V_\lambda}
			\sim_{\mathbb Q,\mathcal Z_\lambda}0
			\);
			\item $\mtcl{J}_{\lambda}$ are effective integral divisors big over $\mtcl Z_{\lambda}$;
			\item  $\mathcal D_{1,\lambda},\ldots,\mathcal D_{k,\lambda}$ are vertical prime divisors over $\mathcal Z_\lambda$; 
			\item $\mathcal L_\lambda$ are  effective Cartier divisors on $\mathcal Z_\lambda$; and
			
			\item  for every datum
			\[
			h:(V,B_V;J,D_1,\ldots,D_k)\longrightarrow(Z,H)
			\]
			as above, there exist  $1\leq\lambda\leq M$ and a point $s\in\mathcal S_\lambda$ such that
			\[
			\begin{gathered}
				g_{\lambda,s}:((\mathcal V_\lambda)_s,
				(\mathcal B_{\mathcal V_\lambda})_s;
				(\mathcal J_\lambda)_s,(\mathcal D_{1,\lambda})_s,\ldots,
				(\mathcal D_{k,\lambda})_s)
				\longrightarrow((\mathcal Z_\lambda)_s,(\mathcal L_\lambda)_s)
				\\
				\simeq
				\\
				h:(V,B_V;J,D_1,\ldots,D_k)\longrightarrow(Z,H).
			\end{gathered}
			\]
		\end{itemize}

		Choose a sufficiently divisible very ample Cartier divisor $\mathcal H_1$ on $\mathcal Z$.
		
		\medskip\noindent\emph{Step 2: MMP in families.}
		Choose $\mu>0$ sufficiently small so that
		\[
		\left(\mathcal V,\mathcal B_{\mathcal V}
		+\mu\left(\mathcal J+\sum\mathcal D_i\right)\right)
		\]
		is klt. Let $\mathcal G_1\in|6d\mathcal H_1|$ be a general member. Then
		\[
		\left(\mathcal V,\mathcal B_{\mathcal V}
		+\mu\left(\mathcal J+\sum\mathcal D_i\right)
		+\frac12g^{\ast}\mathcal G_1\right)
		\]
		is klt.
		
		Let $\rho:\mathcal W\to\mathcal V$ be a log resolution of
		\[
		\left(\mathcal V,
		\mathcal B_{\mathcal V}+\mathcal J+\sum\mathcal D_i
		+g^{\ast}\mathcal G_1\right).
		\]
		Possibly passing to a stratification on $\mathcal S$, we may assume that $\mtcl S$ is smooth and that $\rho$ is a fiberwise log resolution over $\mtcl S$. Since the support above is independent of the coefficients $a_i$, this stratification is common to all choices of $(a_1,\ldots,a_k)$.
		
		Fix $\mathbf a=(a_1,\ldots,a_k)\in\mathbb Q_{\geq0}^k$ and choose $\mu_{\mathbf a}>0$ so that
		\[
		\mu_{\mathbf a}\left(\mathcal J+\sum a_i\mathcal D_i\right)
		\leq
		\mu\left(\mathcal J+\sum\mathcal D_i\right),
		\]
		where the inequality is coefficientwise. Then
		\[
		\left(\mathcal V,\mathcal B_{\mathcal V}
		+\mu_{\mathbf a}\left(\mathcal J+\sum a_i\mathcal D_i\right)
		+\frac12g^{\ast}\mathcal G_1\right)
		\]
		is klt.
		
		The divisor $\mathcal J+\sum a_i\mathcal D_i$ is big over $\mathcal Z$, so the pair obtained by omitting $\frac12g^{\ast}\mathcal G_1$ has a log canonical model over $\mathcal Z$. By boundedness of the lengths of extremal rays, this is also the log canonical model over $\mathcal S$ of
		\[
		\left(\mathcal V,\mathcal B_{\mathcal V}
		+\mu_{\mathbf a}\left(\mathcal J+\sum a_i\mathcal D_i\right)
		+\frac12g^{\ast}\mathcal G_1\right).
		\]
		Applying Corollary~1.4 of~\cite{hacon2017boundednessmodulivarietiesgeneral} to the induced log-smooth pair on $\mathcal W$, we see that this model induces log canonical models fiberwise.
		
		\medskip\noindent\emph{Step 3: Conclude.}
		
		By Lemma~5.3 in~\cite{jjz25} there exist finitely many rational maps
		\[
		\psi_j:\mathcal V\dashrightarrow\mathcal Y_j/\mathcal Z
		\]
		such that, for every $\mathbf{a}\in \Q^k_{\geq 0}$, there exist some $j$ such that $\psi_j$ is the log canonical model of $\mtcl J+\sum a_i\mtcl D_i$ over $\mtcl Z$.

		Let $\mtcl V_s\simeq V$ and  $\mathbf a=(a_1,\ldots,a_k)$ as in Step 1, and choose $j$ as above. Then $$\psi_{j,s}:V\dasharrow\mathcal Y_{j,s}$$ is the log canonical model over $Z$ of
		\[
		\left(V,B_V+\mu_{\mathbf a}\left(J+\sum a_i D_{i}\right)\right).
		\]
		Since $J+\sum a_i D_{i}\sim_{\mathbb Q,Z}lA_V$, this is also the log canonical model of
		\[
		\left(V,B_V+\mu_{\mathbf a}lA_V\right)
		\]
		over $Z$. Since $A$ is ample over $Z$ and $V\dashrightarrow X$ is an isomorphism in codimension one, $(X,B+\mu_{\mathbf a}lA)$ is also the log canonical model of \(
		\left(V,B_V+\mu_{\mathbf a}\left(J+\sum a_i D_{i}\right)\right).
		\)  Hence $\mathcal Y_{j,s}\simeq X$ over $Z$.
		Thus $(X,\operatorname{Supp}(B+f^{\ast}H))$ belongs to one of the finitely many families above.
	\end{proof}
	
	\appendix
	\section{Compactifiability of the stack of traditional stable minimal models}\label{app:tscompact}
	This appendix gives, for the moduli problem used in the proof, an alternative to the application of Theorem~\ref{tame}. We use the quotient construction in the latest version of~\cite{jiang2025boundednesskltgoodminimal}, with the polarization interpreted as a relative Picard class as in Section~2, and verify the quasi-projectivity needed for the compactification lemma of Kov\'acs--Lieblich~\cite[Lemma~4.2]{kovacs2009boundednessfamiliescanonicallypolarized}, which is a consequence of Kresch's compactification theorem~\cite[Theorem~5.3]{kresch2009geometry}.
	
	In the notation of the present paper, the corrected stack is $\mtcl M_{TS}(d,\Phi,\Gamma,\sigma)$. The notation in~\cite[Appendix~A]{jiang2025boundednesskltgoodminimal} is $\mathfrak{TS}_{\mathrm{klt}}(d,\Phi,\Gamma,\sigma)$. 
	
	\begin{defn}

		Let $f:X\to S$ be projective of pure relative dimension $n\geq1$, fix a relatively very ample divisor $L$, and let $c\geq0$. For an $S$-scheme $T$, write $X_T=X\times_S T$. A relative Mumford divisor $D\subset X_T$ is \emph{$K$-flat over $T$} if, for every $t\in T$ and every finite morphism
		\[
		\pi:X_{T_t}\longrightarrow\mathbb P^n_{T_t},
		\qquad T_t=\operatorname{Spec}\O_{T,t},
		\]
		the divisorial pushforward $\pi_*(D_{T_t})$ is a relative Cartier divisor; see~\cite[Definition~7.1]{book}. Here pullback is divisorial, and pushforward uses the divisorial support construction of~\cite[Section~7.3]{book}. All residue fields are infinite since we work over $\C$.
	\end{defn}
	Define the contravariant functor
	\[
	\mathbf{KDiv}_c(X/S)(T)=
	\left\{\left. D\subset X_T\ \right|\
	\begin{gathered}
		D\text{ is a $K$-flat relative Mumford divisor},\\
		\deg_{L_{\bar t}}D_{\bar t}=c\text{ for every geometric point }\bar t\to T
	\end{gathered}\right\},
	\]
	with transition maps given by divisorial pullback, where $\deg_{L_{\bar t}}D_{\bar t}=(L_{\bar t}^{\,n-1}\cdot[D_{\bar t}])$. By~\cite[Theorem~7.3]{book}, this functor is represented by a separated finite-type $S$-scheme $\operatorname{KDiv}_c(X/S)$, called the \emph{scheme of degree-$c$ $K$-flat relative Mumford divisors}. 
	
	\begin{lem}\label{mdivquasiproj}
		Let $f:X\to S$ and $L$ be as above. If we further assume $S$ is quasi-projective over $\C$, then $\mathrm{KDiv}_c(X/S)$ is also quasi-projective over $\C$.
		
	\end{lem}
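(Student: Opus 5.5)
The plan is to realize $\operatorname{KDiv}_c(X/S)$ as a locally closed subscheme of a quasi-projective scheme. The natural choice of ambient scheme is a relative Hilbert scheme or Chow variety, which is projective over $S$.

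Kollár's construction in~\cite[Theorem~7.3]{book} already proceeds this way. A $K$-flat relative Mumford divisor $D$ of degree $c$ on $X_T/T$ is determined by its associated relative cycle, which has degree $c$ with respect to $L$. By~\cite[Section~7.3]{book}, the map $D\mapsto [D]$ into the space of relative cycles of dimension $n-1$ and degree $c$ (the Chow functor of well-defined families of cycles over reduced bases) is a monomorphism. Better still, Kollár shows that $\operatorname{KDiv}_c(X/S)$ is a locally closed subscheme of a relative Hilbert scheme. The degree-$c$ condition bounds the Hilbert polynomials of the divisors with respect to $L$, since these are Mumford divisors of bounded degree on a projective family. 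Only finitely many Hilbert polynomials $P$ occur, so the relevant part is a finite union of the schemes $\operatorname{Hilb}_P(X/S)$. These are projective over $S$ because $L$ is relatively very ample. Concretely, one uses a finite projection $\pi:X\to\mathbb P^n_S$ Zariski locally on $S$ and the identification of $K$-flat divisors with those whose pushforwards are Cartier; the space of Cartier divisors of degree $c$ on $\mathbb P^n$ is a projective space bundle over the base. I would quote the locally closed embedding into $\coprod_P \operatorname{Hilb}_P(X/S)$ from the proof of~\cite[Theorem~7.3]{book}, or assemble it from the finite-projection description.

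Given this embedding, the conclusion is formal. $\operatorname{Hilb}_P(X/S)$ is projective over $S$, and $S$ is quasi-projective over $\C$, so $\operatorname{Hilb}_P(X/S)$ is quasi-projective over $\C$. A locally closed subscheme of a quasi-projective scheme is quasi-projective, and so is a finite disjoint union of such. Separatedness and finite type already follow from~\cite[Theorem~7.3]{book}.

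The main obstacle is to justify that the representing scheme genuinely embeds as a \emph{locally closed subscheme}, rather than only admitting a monomorphism of finite type into the Hilbert or Chow scheme. A monomorphism of finite type is only quasi-finite and separated, and $K$-flatness is imposed scheme-theoretically through the conditions on $\pi_*$. If the literature only provides a monomorphism, I would argue as follows. The representing scheme is separated and of finite type, and it admits a quasi-finite separated morphism to the quasi-projective scheme $\coprod_P\operatorname{Hilb}_P(X/S)$. By Zariski's main theorem, such a morphism factors as an open immersion followed by a finite morphism. A finite scheme over a quasi-projective scheme is quasi-projective, since the pullback of an ample line bundle along a finite morphism is ample. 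Hence $\operatorname{KDiv}_c(X/S)$ is quasi-projective in either case. This fallback removes the need for the precise locally closed statement.
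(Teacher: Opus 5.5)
The gap is your choice of ambient space. You treat a $K$-flat divisor $D\subset X_T$ as a point of $\operatorname{Hilb}(X/S)$. But a relative Mumford divisor is only the closure of $D|_U$. It need not be flat over $T$, and its fibers are the \emph{divisorial} restrictions $D_t$. These can differ from the scheme-theoretic fibers, and their Hilbert polynomials can jump over a connected base.

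For example, take $S=\operatorname{Spec}\C$ and $n=2$, and let $X\subset\mathbb P^3$ be the cone over a smooth plane cubic $E$. Fix general $p_1,p_2\in E$, and let $q$ be the third point of $E$ on the line $\overline{p_1p_2}$. Over $T:=E\setminus\{p_1,p_2\}$, let $D_t:=\ell_{p_1}+\ell_{p_2}+\ell_t$ be a sum of three rulings. This is a relative Mumford divisor of degree $3$. It is $K$-flat, because over each local ring of $T$ its pushforward to $\mathbb P^2$ is a Weil divisor on a regular scheme containing no fiber, hence relatively Cartier. Yet $\chi(\mathcal O_{D_t}(m))=3m+1$ for $t\neq q$ (three non-coplanar concurrent lines), while $\chi(\mathcal O_{D_q}(m))=3m$ (a plane cubic); the flat limit acquires an embedded point at the vertex.

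So the point $T\to\operatorname{KDiv}_3(X)$ has no compatible image in $\coprod_P\operatorname{Hilb}_P(X)$. There is neither a locally closed embedding nor the quasi-finite morphism that your Zariski-main-theorem fallback requires. The other two routes you mention also fall short. The Chow map exists only over reduced (seminormal) bases, so it forgets the possibly nonreduced structure of $\operatorname{KDiv}_c$. The finite-projection description is only Zariski-local on $S$, and quasi-projectivity does not glue along such a cover.

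The paper instead uses Kollár's scheme $\operatorname{MDiv}_c(X/S)$ of relative Mumford divisors, which is quasi-projective over $S$ by \cite[Theorem~4.76]{book}. It combines this with the fact that $\operatorname{KDiv}_c(X/S)$ is a subscheme of $\operatorname{MDiv}_c(X/S)$ \cite[Proposition~7.65 and 7.66]{book}. Quasi-projectivity over $\C$ then follows from that of $S$.

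Your fallback becomes correct, and is a clean way to finish, once you aim it at $\operatorname{MDiv}_c(X/S)$ rather than the Hilbert scheme. $\operatorname{KDiv}_c$ is a subfunctor of $\operatorname{MDiv}_c$ with the same divisorial pullbacks. So the induced morphism between separated $S$-schemes of finite type is a monomorphism, hence separated and quasi-finite. Zariski's main theorem then gives quasi-projectivity without needing to know that the inclusion is locally closed.
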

	\begin{proof}
		Quasi-projectivity is not part of the statement of~\cite[Theorem~7.3]{book}. It follows, however, from the construction used in its proof.
		
		Choose a closed immersion $X\hookrightarrow\mathbb P^N_S$. By \cite[Theorem~4.76]{book}, the scheme $\mathrm{MDiv}_c(X/S)$ of relative Mumford divisors is quasi-projective over $S$. By \cite[Proposition~7.65 and the proof of Theorem~7.3 in~7.66]{book}, $\mathrm{KDiv}_c(X/S)$ is a full subscheme of $\mathrm{MDiv}_c(X/S)$. Thus it is also quasi-projective over $S$.

	\end{proof}

	\begin{prop}\label{tscompact}
		For the numerical data occurring in this paper, the moduli stack
		\[
		\mtcl M_{TS}(d,\Phi,\Gamma,\sigma)
		\]
		is compactifiable. More precisely, there is an open immersion
		\[
		\mtcl M_{TS}(d,\Phi,\Gamma,\sigma)
		\hookrightarrow
		\overline{\mtcl M}_{TS}
		\]
		into a proper Deligne--Mumford stack of finite type over $\C$.
	\end{prop}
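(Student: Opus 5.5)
The plan is to realize $\mtcl M_{TS}(d,\Phi,\Gamma,\sigma)$ as a quotient stack $[Q/G]$, with $Q$ a quasi-projective scheme over $\C$ and $G$ a linear algebraic group (in fact $PGL_N$, or a product of general linear groups) acting linearly on $Q$ with finite reduced stabilizers. Once this is done, \cite[Lemma~4.2]{kovacs2009boundednessfamiliescanonicallypolarized}, which rests on Kresch's theorem \cite[Theorem~5.3]{kresch2009geometry}, gives an open immersion into a proper Deligne--Mumford stack of finite type. So the proof reduces to checking the hypotheses of that lemma for the quotient presentation of \cite[Appendix~A]{jiang2025boundednesskltgoodminimal}, with the polarization read as a relative Picard class.

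First I will recall the construction and adapt it to Picard classes. By the boundedness results used for Theorem~\ref{tsmoduli}, there is a uniform $k$ such that for every $(d,\Phi,\Gamma,\sigma)$-traditional stable minimal model $(X,B),A$ the divisor $L:=k(K_X+B)+kA$, or a fixed twist of it, is very ample with vanishing higher cohomology and fixed Hilbert polynomial. This only depends on the class of $A$ up to twisting by the base, so choosing a basis of $H^0(L)$ is functorial in families via Lemma~\ref{globalpicardclass}: after replacing $[A]$ by a bounded multiple, a representative exists and $f_*L$ is locally free, and a frame of it gives an embedding into $\mathbb P^{N}$. This produces a locally closed subscheme $H^\circ$ of a Hilbert scheme of $\mathbb P^N$, which is quasi-projective. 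Over it I will take the relative $\mathrm{KDiv}$ schemes of the universal family, one for each component of $B$ with its coefficient and degree; these are quasi-projective by Lemma~\ref{mdivquasiproj}, and their fibered product is again quasi-projective. Next I cut out the locus where the fibres are klt, $K+B$ is $\Q$-Cartier and semiample in the required way, $[A]$ is compatible with the embedding line bundle, and the numerical data match. These are locally closed conditions: local stability is locally closed by \cite[Chapter~4]{book}, ampleness is open, and the Picard-compatibility condition is closed in a relative Picard scheme. This gives a quasi-projective $Q$ with a $PGL_{N+1}$-action and $\mtcl M_{TS}\simeq[Q/PGL_{N+1}]$. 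The $PGL_{N+1}$-action on $Q$ should be linearized, by taking the pullback of an ample line bundle from the Hilbert/Chow and $\mathrm{KDiv}$ embeddings, which are $GL$-equivariant, with a power made $PGL$-linearized as usual.

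Second, I will verify the remaining hypotheses. Stabilizers are finite and reduced, because Theorem~\ref{tsmoduli} already gives a separated Deligne--Mumford stack, and we are in characteristic zero. Separatedness is also part of Theorem~\ref{tsmoduli}. The lemma of Kov\'acs--Lieblich then yields the compactification $\overline{\mtcl M}_{TS}$.

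The main obstacle is the precise matching of the stack defined via $\operatorname{Pic}_{X/S}$-classes with the quotient $[Q/G]$. Rigidifying by a frame of $f_*L$ requires an honest line bundle, while the stack only remembers a class. I would handle this by using the determinant-normalized representative $L^{\otimes ar}\otimes p^*(\det E)^{-1}$ from the proof of Lemma~\ref{globalpicardclass}. This is canonical, so it descends and makes the frame bundle a $GL$-torsor, and hence a $PGL$-torsor after projectivizing, over any family. I expect the verification that the resulting morphism $Q\to\mtcl M_{TS}$ is a $PGL$-torsor, rather than merely surjective, to need the most care. If it fails directly, I would fall back on a $GL_{N+1}$-quotient of the frame bundle with the central $\mathbb G_m$ acting with a fixed weight, which Kresch's theorem still handles.
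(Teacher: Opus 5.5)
Your strategy is the paper's: present each of the finitely many numerical strata of $\mtcl M_{TS}(d,\Phi,\Gamma,\sigma)$ as a quotient, by a linear algebraic group, of a quasi-projective scheme built from a Hilbert scheme and the $\operatorname{KDiv}$ schemes of Lemma~\ref{mdivquasiproj} by locally closed conditions, and then apply \cite[Lemma~4.2]{kovacs2009boundednessfamiliescanonicallypolarized}. The paper simply imports Jiang's presentation $[H_9/(\operatorname{PGL}_{n+1}\times\operatorname{PGL}_{l+1})]$ and checks that $H_9$ is quasi-projective because, being of finite type, it is a finite union of locally closed pieces of the quasi-projective $H_2$; you should make the same finiteness point, since some steps are only locally closed partial decompositions. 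Two corrections to your bookkeeping: the $\operatorname{PGL}$-frame bundle of $\mathbb P(f_*L)$ depends only on the class $[L]\in\operatorname{Pic}_{X/S}(S)$, so your ``main obstacle'' disappears without any determinant normalization; and you must not replace $[A]$ by a multiple, because an embedding by $k(K_X+B)+kA$ only sees $[A]^{\otimes k}$, so $[A]$ itself must be carried as a point of the relative Picard scheme, as in your compatibility condition.
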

	\begin{proof}
		\medskip\noindent\emph{Step 1: reduce to finitely many numerical strata.}
		We trace the construction in~\cite[Appendix~A.2]{jiang2025boundednesskltgoodminimal}. By~\cite[Lemma~A.9]{jiang2025boundednesskltgoodminimal}, there is a finite disjoint decomposition into open-and-closed substacks
		\[
		\mtcl M_{TS}(d,\Phi,\Gamma,\sigma)
		=
		\bigsqcup_{\alpha}\mtcl M_{\alpha},
		\]
		where $\alpha=(h,n,l,\mathbf c)$ records the Hilbert polynomial, the two dimensions of spaces of sections, and the degrees of the marked boundary divisors. For a fixed $\alpha$, the proof of~\cite[Theorem~A.11]{jiang2025boundednesskltgoodminimal} starts with
		\[
		H=\operatorname{Hilb}_h(\mathbb P^n\times\mathbb P^l)
		\]
		and its open subscheme $H_1$ parametrizing the required geometrically connected, reduced, equidimensional, and geometrically normal fibers. Step~2 constructs
		\[
		H_2=
		\operatorname{MDiv}_{c_1}(X_{H_1}/H_1)
		\times_{H_1}\cdots\times_{H_1}
		\operatorname{MDiv}_{c_m}(X_{H_1}/H_1).
		\]
		The Hilbert scheme $H$ is projective, hence $H_1$ is quasi-projective. Lemma~\ref{mdivquasiproj} shows that every factor in the displayed fiber product is quasi-projective over $H_1$. Therefore $H_2$ is quasi-projective over $\C$.
		
		\medskip\noindent\emph{Step 2: prove that the final parameter scheme is quasi-projective.}
		Steps~3--5 of the same proof successively construct
		\[
		\begin{aligned}
			&H_3\longrightarrow H_2,\qquad H_4\longrightarrow H_3,\qquad
			H_5\subset H_4,\qquad H_6\subset H_5,\\
			&H_7\subset H_6,\qquad H_8\subset H_7,\qquad H_9\subset H_8.
		\end{aligned}
		\]
		Here $H_3\to H_2$ and $H_4\to H_3$ are locally closed partial decompositions, $H_5$ and $H_6$ are locally closed, $H_7$ is open, $H_8$ is locally closed, and $H_9$ is open and closed. These are the precise conclusions recorded in Steps~3--5 of~\cite[Theorem~A.11]{jiang2025boundednesskltgoodminimal}. Jiang also shows that $H_9$ is of finite type. Hence every connected component of $H_9$ is locally closed in $H_2$ and there are only finitely many such components. Thus $H_9$ is quasi-projective.
		
		\medskip\noindent\emph{Step 3: compactify the quotient stacks.}
		Finally, the argument in Step~6 of~\cite[Theorem~A.11]{jiang2025boundednesskltgoodminimal}, applied to local representatives of the relative Picard class, gives the quotient presentation
		\[
		\mtcl M_{\alpha}
		\simeq
		\left[H_9/\bigl(\operatorname{PGL}_{n+1}\times
		\operatorname{PGL}_{l+1}\bigr)\right].
		\]
		The acting group is a linear algebraic group. Moreover, $\mtcl M_{\alpha}$ is a separated Deligne--Mumford stack of finite type because it is an open-and-closed substack of the stack in Theorem~\ref{tsmoduli}. The compactification lemma of Kov\'acs--Lieblich~\cite[Lemma~4.2]{kovacs2009boundednessfamiliescanonicallypolarized}, proved from~\cite[Theorem~5.3]{kresch2009geometry}, therefore gives an open immersion of $\mtcl M_{\alpha}$ into a proper Deligne--Mumford stack $\overline{\mtcl M}_{\alpha}$.
		
		There are only finitely many indices $\alpha$. Thus
		\[
		\overline{\mtcl M}_{TS}:=
		\bigsqcup_{\alpha}\overline{\mtcl M}_{\alpha}
		\]
		is a proper Deligne--Mumford stack of finite type, and the finite disjoint union of the preceding open immersions proves the proposition.
	\end{proof}

	\bibliographystyle{alpha}
	\bibliography{bbb}

\end{document}